\newcommand\WriteupNote[1]{}
\theoremstyle{plain}
 \newtheorem{theorem}{Theorem}[section]
 \newtheorem{lemma}[theorem]{Lemma}
 \newtheorem{corollary}[theorem]{Corollary}
 \newtheorem{proposition}[theorem]{Proposition}
 \theoremstyle{definition}
 \newtheorem{definition}[theorem]{Definition}
 \theoremstyle{definition}
  \newtheorem{remark}[theorem]{Remark}
\numberwithin{equation}{section}
\newcommand{\N}{\mathbb N}
\newcommand{\QQ}{\mathbb Q}
\newcommand{\R}{\mathbb R}
\newcommand{\der}{\mathrm{d}}
\newcommand{\eps}{\varepsilon}
\renewcommand{\phi}{\varphi}
\newcommand{\abs}[1]{\left\lvert #1 \right\rvert}
\newcommand{\Order}{\mathcal O}
\newcommand{\Der}[1]{\frac{\der}{\der #1}}
\DeclareMathOperator{\sgn}{sgn}
\newcommand{\lsp}{\operatorname{lsp}}
\newcommand{\blsp}{\operatorname{blsp}}
\newcommand{\spec}{\operatorname{spec}}
\newcommand{\Tr}{\operatorname{Tr}}
\newcommand{\dd}{\operatorname{d} \!}
\newcommand{\ii}{\operatorname{i}}
\newcommand{\RR}{\mathbb{R}}
\renewcommand{\Re}{\operatorname{Re}}
\newcommand{\mstrut}[1]{\mbox{\rule{0mm}{#1}}}
\newcommand{\ba}{\begin{array}}
\newcommand{\ea}{\end{array}}
\newcommand{\col}[1]{\begin{bmatrix}#1\end{bmatrix}}
\newcommand{\bra}[1]{\left[#1\right]} % encloses the argument using stretchable square brackets
\newcommand{\mat}[1]{\begin{matrix}#1\end{matrix}} % no delimiters
\newcommand{\bmat}[1]{\bra{\mat{#1}}} % square brackets as delimiters
\def\beq{\begin{equation} } \def\eeq{\end{equation}}
\def\RR{\mathbb R}
\def\eps{\varepsilon}  
\def\ben{\begin{enumerate} }
\def\een{\end{enumerate} }
\def \R{ {\mathbb R}}
\def \p { \partial}
\def \surf{ 1}  \def \rturn{ R^\star}
\def \CMB{ R} \def \wkb{\beta}
\def \refl{b}
\def \refll{\mathbf{f}}
\def \tConst{ q_0}
\def \simp{{basic}}
\def \branch{{leg}}
\newcommand{\joonas}[1]{}
\newcommand{\vitaly}[1]{}
\newcommand{\maarten}[1]{}
\def \pg{p_\gamma}
\title[Spectral rigidity with discontinuities]{Spherically symmetric terrestrial planets with discontinuities are spectrally rigid}
\author{Maarten V. de Hoop}
\address{Simons Chair in Computational and Applied Mathematics and Earth Science\\
Rice University\\
6100 Main MS-134, Houston, TX 77005-1892, USA\\
\texttt{mvdehoop@rice.edu}}
\author{Joonas Ilmavirta}
\address{Department of Mathematics and Statistics\\
University of Jyv\"askyl\"a\\
P.O. Box 35 (MaD), FI-40014 University of Jyv\"askyl\"a, Finland\\
\texttt{joonas.ilmavirta@jyu.fi}}
\author{Vitaly Katsnelson}
\address{College of Arts and Sciences\\
New York Institute of Technology\\
New York NY, USA\\
\texttt{vkatsnel@nyit.edu}}
\date{\today}
\begin{document}

\maketitle

\begin{abstract}
We establish spectral rigidity for spherically symmetric manifolds with boundary and interior interfaces determined by discontinuities in the metric under certain conditions. Rather than a single metric, we allow two distinct metrics in between the interfaces enabling the consideration of two wave types, like  \textit{P}- and \textit{S}-polarized waves in isotropic elastic solids. Terrestrial planets in our solar system are approximately spherically symmetric and support toroidal and spheroidal modes. Discontinuities typically correspond with phase transitions in their interiors. Our rigidity result applies to such planets as we ensure that our conditions are satisfied in generally accepted models in the presence of a fluid outer core. The proof is based on a novel trace formula. We also prove that the length spectrum of the Euclidean ball is simple.
\end{abstract}

%\keywords{Inverse problems, spectral rigidity, planets, seismology}

%\subjclass[2010]{...}

\subsection*{Acknowledgements
} MVdH was supported by the Simons Foundation under the MATH + X program, the National Science Foundation under grant DMS-1815143, and the corporate members of the Geo-Mathematical Imaging Group at Rice University.
JI was supported by the Academy of Finland (projects 332890 and 336254).
We thank Chunquan Yu for help with composing figure~\ref{fig:basic}. We also thank Gabriel Paternain and Anthony V\'{a}rilly-Alvarado for some helpful discussions. The authors greatly appreciate the detailed suggestions made by an anonymous referee which
improved this paper.
\section{Introduction}

We establish spectral rigidity for spherically symmetric manifolds with boundary and interfaces determined by discontinuities in the metric. We study the recovery of a (radially symmetric Riemannian) metric or wave speed containing jump discontinuities along finitely many $C^\infty$ hypersurfaces. To our knowledge, it is the first such result pertaining to a manifold with boundary and a piecewise continuous metric.

Terrestrial planets in our solar system are approximately spherically symmetric. On the one hand, the deviation from such a symmetry becomes apparent only at high eigenfrequencies. On the other hand, our results provide a stable approximation upon truncating the spectrum of eigenfrequencies. Discontinuities arise largely due to phase transitions. Hence, their radial depths play an important role in determining the thermal structure and chemical composition of planets as well as the dynamics of their interiors \cite{SchubertPhaseTransition}. The question of spectral rigidity is behind the validity of PREM \cite{DZIEWONSKIPREM} which is still widely used as a reference in linearized tomography. More interestingly, in space exploration such as the current NASA's InSight mission to Mars \cite{lognonne:hal-02526740}, with a single data point, spectral data could provide the leading information about its interior; other missions are being proposed.

The results presented, here, are an extension of our previous result \cite{HIKRigidity} where we proved a spectral rigidity for a smooth metric on a radial manifold. Allowing for certain discontinuities in the metric adds a new level of challenge for several reasons. First, the geodesics in such a manifold get reflected and transmitted when they hit an interface, creating a complex geometry for the analysis. In addition, we allow such geodesics to hit an interface at certain critical angles where a scattered ray can intersect an interface tangentially or ``glide'' along an interface. We also recover the location of the interfaces and do not assume that they are known.

We require the so-called Herglotz condition while allowing an unsigned curvature; that is,
curvature can be everywhere positive or it can change sign, and we
allow for conjugate points. Spherically symmetric manifolds with
boundary are models for planets, the preliminary reference Earth model
(PREM) being the prime example. Specifically, restricting to toroidal
modes, our spectral rigidity result determines the shear wave speed of
Earth's mantle in the rigidity sense.

The method of proof relies on a trace formula, relating the spectrum
of the manifold with boundary to its length spectrum, and the
injectivity of the periodic broken ray transform. Specifically, our
manifold is the Euclidean ball $M = \bar B(0,1)\subset \RR^3$, with the metric $g(x) =
c^{-2}(\abs{x}) e(x)$, where $e$ is the standard Euclidean metric and
$c \colon (0,1]_r \to (0,\infty)$ is a function satisfying suitable
conditions, where $r = |x|$ is the radial coordinate.
We work in dimension three but our result on length spectral rigidity (Theorem \ref{thm:blspr-multilayer}) carries over to higher dimensions, and our methods to prove spectral rigidity (Theorem \ref{t: spectral rigiditiy}) may be generalized to higher dimensions.

We assume $c(r)$ has a jump discontinuity at a finite set of values $r = r_1, \dots, r_K$; that is $\lim_{r \to r_i^-}c(r) \neq \lim_{r \to r_i^+}c(r)$ for each $i$.
Our assumption is the \emph{smooth Herglotz
condition}: $\Der{r}(r/c(r))>0$ is satisfied everywhere away from the discontinuities of $c$, but we note that $c$ is allowed to either increase or decrease across an interface.
We note that the natural extension of the Herglotz condition when $c$ is smooth to our case when $c$ has discontinuities is to view $c$ as a distribution and require $\Der{r}(r/c(r))>0$ in the distributional sense. If $c$ has a jump discontinuity at $r = r_i$, this distributional condition implies $\lim_{r \to r_i^-}c(r) > \lim_{r \to r_i^+}c(r)$. This would be too restrictive since radial models of Earth (PREM) and Mars (T13) (see \cite{KhanMars}) satisfy the smooth Herglotz condition but not this stronger distributional Herglotz condition, since the jump across the core-mantle boundary differs in sign to the jumps at other interfaces. Hence, our smooth Herglotz condition is weaker to allow the jump across interfaces to have any sign.
We  also allow trapped rays that never interact with the boundary. Such rays just  correspond  to  small  but nonzero boundary amplitudes of modes. The assumption $\Der{r}(r/c(r))>0$ when $c$ is smooth is the \emph{Herglotz condition} first discovered by Herglotz~\cite{H:kinematic} and
used by Wiechert and Zoeppritz~\cite{WZ:kinematic}.

By a maximal geodesic we mean a unit speed geodesic on the Riemannian
manifold $(M,g)$ with each endpoint at the boundary $\partial M$ or an interface. A broken ray or a billiard trajectory is a
concatenation of maximal geodesics satisfying the reflection condition
of geometrical optics at both inner and outer boundaries of $M$, and Snell's law for geometric optics at the interfaces. If the initial and final points
of a broken ray coincide at the boundary or an interface, we call it a periodic broken ray -- in
general, we would have to require the reflection condition at the
endpoints as well, but in the assumed spherical symmetry it is
automatic. We will describe later (Definition \ref{d: ccc}) what will
be called the \emph{countable conjugacy condition} which ensures
that up to rotation only countably many maximal geodesics have conjugate endpoints.
\vitaly{The above paragraph needs checking}
The length spectrum of a manifold $M$ with boundary is the set of
lengths of all periodic broken rays on $M$. If the
radial sound speed is $c$, we denote the length spectrum by $\lsp(c)$. 
We will introduce in Definition \ref{simple closed ray} the notion of closed \emph{\simp{} rays}, which are certain periodic rays that stay completely within a single layer. The set of lengths of such rays form the \simp{} length spectrum $\blsp(c)$.
We note that every broken ray is contained in a
unique two-dimensional plane in $\RR^n$ due to symmetry
considerations. Therefore, it will suffice to consider the case $n=2$;
the results regarding geodesics and the length spectrum carry over to
higher dimensions. We denote the Neumann spectrum of the Laplace--Beltrami operator in
three dimensions, $\Delta_c = c^3 \nabla \cdot c^{-1} \nabla$, on $M$
by $\spec(c)$, where we impose Neumann-type boundary
conditions on both the inner and outer boundary.
The spectrum $\spec(c)$ includes multiplicity, not just the set spectrum.

Some earlier results in tensor tomography, the methods of which are
related to ours, may be found in
\cite{Anasov14,Beurling15,Sharaf97,UhlSharaf}. Let us now enumerate the various geometric assumption we make in this manuscript for easy reference.

\subsection{Herglotz and other conditions}

 There are several geometric assumptions we make that we shall enumerate here:
\begin{enumerate}
\item[(A1)] \label{A1}
``Periodic conjugacy condition.'' This is an analog of the clean intersection hypothesis used in \cite{Mel79,DG75,HIKRigidity}; 
(see Definition \ref{d: pcc}). 

\item [(A2)]
``Principal amplitude injectivity condition.'' This is an analog to assuming \emph{simplicity} of the length spectrum.  (see section \ref{s: geometric spreading injectivity condition}).
 %This is a condition to ensure that for each period, if there are two distinct geodesics with that period that are neither time reversals or rotations of one another, their principal contributions to the trace do not coincide.
\item [(A3)]
``Countable conjugacy condition'' (Definition \ref{d: ccc}).
\item [(A4)]
Smooth Herglotz condition: $\frac{d}{dr} \frac{r}{c(r)} > 0$ away from the discontinuities.
\end{enumerate}

These assumptions allow us to prove that the singular support of the wave trace includes the \simp{} length spectrum. Assumption (A1) is a standard assumption (normally referred to as the clean intersection hypothesis when $c$ is smooth) when calculating the trace singularity by a stationary phase method to ensure that the critical manifolds are non-degenerate and the phase function is Bott-Morse nondegenerate (see \cite{DG75, Mel79}). A ubiquitous issue in computing a trace formula is the possibility of cancellations between the contributions of two components of the same length that are not time reversals of each other to the wave trace. One usually assumes ``simplicity'' of the length spectrum so that any two rays with a given period are either rotations of each other or time reversals of each other, but since our trace formula computation is more explicit, we have a slightly weaker assumption (A2) to take care of this issue. Assumptions (A1), (A2), and (A4) are needed for the trace formula (Proposition \ref{prop:Trace Formula}), and all four assumptions are needed for spectral rigidity (Theorem \ref{t: spectral rigiditiy}), while only assumptions (A3) and (A4) are used to prove length spectral rigidity (Theorem \ref{thm:blspr-multilayer}).  Below, we provide a chart for easy reference regarding which assumptions are needed for each theorem:

\begin{table}[h]
\begin{center}
\begin{tabular}{lcccc}                 & (A1)                   & (A2)                   & (A3)                   & (A4)                   \\ \cline{2-5}
\multicolumn{1}{l|}{Trace formula}            & \multicolumn{1}{c|}{X} & \multicolumn{1}{c|}{X} & \multicolumn{1}{c|}{}  & \multicolumn{1}{c|}{X} \\ \cline{2-5}
\multicolumn{1}{l|}{Length spectral rigidity} & \multicolumn{1}{c|}{}  & \multicolumn{1}{c|}{}  & \multicolumn{1}{c|}{X} & \multicolumn{1}{c|}{X} \\ \cline{2-5}
\multicolumn{1}{l|}{Spectral rigidity}        & \multicolumn{1}{c|}{X} & \multicolumn{1}{c|}{X} & \multicolumn{1}{c|}{X} & \multicolumn{1}{c|}{X} \\ \cline{2-5}
\end{tabular}
\end{center}
\end{table}

{\it Genericity of the assumptions: }
A key novelty of the present paper is that the final proof is essentially a length spectral rigidity result.
On a general Riemannian manifold the very existence of closed orbits is a complicated matter \cite{Bangert1980, benci1992, gromoll1969}, and having a boundary makes the problem even harder.
In some geometries, such as negatively curved closed manifolds, periodic orbits are known to exist and indeed spectral rigidity has been proven on such manifolds~\cite{Croke98, Croke1990}.
In the case of spherical symmetry we only need a very mild condition to ensure that there are enough periodic orbits and they behave cleanly enough; this is called the countably conjugacy condition. Thus, we only require assumptions (A3) and (A4) to show length spectral rigidity.
We expect these condition to be generic on the class of manifolds we study, but verifying it is not simple check, so we leave it as a conjecture that assumptions (A1), (A2), and (A3) are generic among metrics that satisfy the Herglotz condition.

In Appendix \ref{app: example satisfying A1-A4}, we prove that the class of metrics satisfying all of our assumptions is nonempty and a genericity result. Specifically, we construct an uncountable family of metrics that satisfy assumptions (A.1), (A.3), and (A.4). For assumption (A.2), which is an analog of having a simple length spectrum in our setting and is more subtle to verify, we prove that within this class, assumption (A.3) is generically satisfied.

In Appendix \ref{a: ball case}, we prove that the Euclidean disk does have a simple length spectrum, thereby satisfying all of the assumptions and demonstrating that the set of metrics satisfying assumptions (A.1)-(A.4) is nonempty. The proof relies on advanced concepts from number theory, even in this geometrically simplest case. This suggests that finding examples in non-Euclidean geometry will be difficult.
\medskip\medskip \medskip \medskip

\noindent

\subsection{Main results}
\label{s: main results}
Here we present our main theorems, which follow a discussion of the notation we use for the geometry. Let $A(r', r'') = \bar B(0, r'') \setminus B(0, r') \subset \RR^3$ be the closed annulus in a Euclidean space where $r''>r'$. Fix $K \in \mathbb N$ and let $r_k \in (0, \surf)$ so that $\surf=:r_0 > r_1 > \cdots > r_K$.
Assume $c(r)$ has jump discontinuities at each $r_k \in (0, \surf)$. Let $\Gamma = \bigcup_k \{ r = r_k\}$ be the collection of interfaces together with $\p M$, and denote $\Gamma_k := \{ r = r_k\}$. We sometimes refer to the smooth annular regions $A(r_k, r_{k-1})$ as \emph{layers}. We view $M$ as a Riemannian manifold with (rough) metric $g = c^{-2} dx^2$.

\begin{definition}

Fix any $\eps>0$ and $K\in\N$.
We say that a collection of functions $c_\tau\colon[0,1]\to(0,\infty)$ indexed by $\tau\in(-\eps,\eps)$ is an admissible family of profiles if the following hold:
\begin{itemize}
\item
There are radii $r_k\in(0,1)$ that depend $C^1$-smoothly on $\tau\in(-\eps,\eps)$ so that $1\eqqcolon r_0(\tau)>r_1(\tau)>\dots>r_K(\tau)>0$ for all $\tau\in(-\eps,\eps)$.
\item
For every $\tau\in(-\eps,\eps)$ the function $c_\tau$ is piecewise $C^{1,1}$ and satisfies the smooth Herglotz condition.
\item
The only singular points of each function $c_\tau$ are the radii $r_k(\tau)$ where it has a jump discontinuity.
\item
Within each annulus $A(r_k(\tau),r_{k-1}(\tau))$ the profile $c_\tau$ satisfies the countably conjugacy condition for all $\tau\in(-\eps,\eps)$.
\item
We assume that $(r,\tau)\mapsto c_\tau(r)$ is $C^1$ at all points where $r\notin\{r_1(\tau),\dots,r_K(\tau)\}$.
\end{itemize}
\end{definition}
Recall from the introduction that the length spectrum of a manifold $M$ with boundary is the set of
lengths of all periodic broken rays on $M$ and we denote the length spectrum by $\lsp(c)$. We will introduce in Definition \ref{simple closed ray} the notion of closed \emph{\simp{} rays}, which are certain periodic rays that stay completely within a single layer. The set of lengths of such rays form the \simp{} length spectrum $\blsp(c)$.

Our main theorem provides the rigidity of the basic length spectrum in the presence of ``countable noise''.
Choosing the ``noise'' suitably gives corollaries for the full length spectrum.
Missing or spurious points in the length spectrum or some amount of degeneracy do not matter.
The ``noise'' can be of the same size as the data, and this will play a role in the case of multiple wave speeds.

\begin{theorem}
\label{thm:blspr-multilayer}
Fix any $\eps>0$ and $K\in\N$, and let $c_\tau(r)$ be an admissible family of profiles with discontinuities at $r_k(\tau)$ for all $k=1,\dots,K$.

Let $\blsp(\tau)$ denote the basic length spectrum of 
%the annulus $A(1,r_K(\tau))$
the ball $\bar B(0,1)$
with the velocity profile $c_\tau$.
Suppose $\blsp(\tau)$ is countable for all $\tau$.
Let $S(\tau)$ be any collection of countable subsets of $\RR$ indexed by $\tau$.
If $\blsp(\tau)\cup S(\tau)=\blsp(0)\cup S(0)$ for all $\tau\in(-\eps,\eps)$, then $c_\tau=c_0$ and $r_k(\tau)=r_k(0)$ for all $\tau\in(-\eps,\eps)$ and $k=1,\dots,K$.
\end{theorem}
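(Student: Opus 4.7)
The plan is to combine the Herglotz structure of closed geodesics within each annular layer with the topological fact that a continuous curve into a countable set is constant, and then to invoke layer-wise Abel-type inversion.

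First, I would parametrize the basic closed rays. Within each layer $A(r_k(\tau),r_{k-1}(\tau))$, the smooth Herglotz condition implies that any turning geodesic with deepest point $\rho\in(r_k(\tau),r_{k-1}(\tau))$ advances by a fixed angle $\Theta_\tau(\rho,k)$ between consecutive bounces off the outer interface $\{r=r_{k-1}(\tau)\}$ and covers arc-length $T_\tau(\rho,k)$ between them; both quantities are given by Abel-type integrals in $c_\tau$. A basic closed ray exists precisely when $\Theta_\tau(\rho,k)=2\pi p/q$ for coprime positive integers $p,q$, and the monotonicity of $\Theta_\tau(\cdot,k)$ in $\rho$ (a Herglotz consequence) uniquely determines a turning radius $\rho_\tau(k,p,q)$; the ray then has length $L_\tau(k,p,q)=qT_\tau(\rho_\tau(k,p,q),k)$. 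The $C^1$-dependence of the profile on $\tau$ and the implicit function theorem yield a continuous function $\tau\mapsto L_\tau(k,p,q)$ for each valid triple $(k,p,q)$.

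Second, I would invoke the noise hypothesis. Since $\blsp(\tau)\cup S(\tau)=\blsp(0)\cup S(0)$ and both sets on the right are countable by hypothesis, the image of $\tau\mapsto L_\tau(k,p,q)$ lies in a countable subset of $\RR$. A continuous function from a connected interval into a countable subset of $\RR$ is necessarily constant, so $L_\tau(k,p,q)=L_0(k,p,q)$ for every $\tau\in(-\eps,\eps)$ and every valid triple $(k,p,q)$.

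Third, I would pass to pointwise identities and invert. As $p/q$ ranges over coprime rationals in the image of $\Theta_\tau(\cdot,k)/(2\pi)$, the turning radii $\rho_\tau(k,p,q)$ form a dense subset of $(r_k(\tau),r_{k-1}(\tau))$. Combining the $\tau$-constancy of $L_\tau(k,p,q)$ with that of $\Theta_\tau(\rho_\tau(k,p,q),k)=2\pi p/q$ and continuity in $\rho$ forces the layerwise Herglotz data $(T_\tau(\cdot,k),\Theta_\tau(\cdot,k))$ to coincide with $(T_0(\cdot,k),\Theta_0(\cdot,k))$ after a common reparametrization of the turning radius. The classical Herglotz--Wiechert inversion then recovers $c$ on the layer from this pair. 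I would run the recovery outward-in: $r_0(\tau)=1$ is fixed, $r_1(\tau)$ emerges as the infimum of admissible turning radii in the outermost layer (detectable from the boundary asymptotics of $\Theta_\tau$ and $T_\tau$), yielding $r_1(\tau)=r_1(0)$ and then $c_\tau=c_0$ on $(r_1(\tau),1)$. Iterating inward gives $c_\tau=c_0$ and $r_k(\tau)=r_k(0)$ for every $k$.

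The main obstacle is that length data alone does not come with layer or advance-class labels, so the decomposition of $\blsp(\tau)$ into contributions from different $(k,p,q)$ is a priori unknown. I would address this combinatorial labeling by tracking each length continuously from $\tau=0$ and by exploiting the asymptotics of $L_\tau(k,p,q)$ as $q\to\infty$, where the geodesic concentrates near the outer interface of its host layer, so that contributions sort by layer. A further subtlety is that basic closed rays reflecting off both the inner and outer interfaces of a layer, rather than turning smoothly, contribute an additional countable family of lengths; the same topological argument pins them down and they should be incorporated into the Herglotz inversion step via the inner-boundary reflection data.
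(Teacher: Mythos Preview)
Your core topological insight---a continuous map from $(-\eps,\eps)$ into a fixed countable set is constant---is exactly the engine of the paper's proof, and your outward-in layer stripping is the right global architecture. But two steps need repair.

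First, the claim that $\Theta_\tau(\cdot,k)$ is monotone in the turning radius as a consequence of the Herglotz condition is false. The Herglotz condition forces $\rho\mapsto\rho/c_\tau(\rho)$ to be increasing, which guarantees that each maximal diving geodesic returns to the outer interface, but it says nothing about monotonicity of the opening angle. Indeed, for profiles of the form $(c/r)^{-2}=a+b\ln r$ one computes $\alpha'(p)=0$ at $p^2=a/2$, so $\alpha$ genuinely turns around. Your appeal to the implicit function theorem therefore needs a substitute hypothesis: this is precisely the countable conjugacy condition built into the definition of an admissible family, which ensures that $\partial_\rho\Theta_\tau\neq0$ away from a countable set of depths. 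The paper (via its Lemma~\ref{lma:lspr-annulus} and the argument of \cite{HIKRigidity}) uses this to show that the set of \emph{stable} periodic depths---those for which the periodic orbit persists $C^1$ in $\tau$---is dense, and that is enough.

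Second, your inversion step is murkier than it needs to be. From $L_\tau(k,p,q)=L_0(k,p,q)$ you want to deduce $c_\tau=c_0$, but the turning radius $\rho_\tau(k,p,q)$ itself drifts with $\tau$, so the ``common reparametrization'' you invoke is doing real work that you have not spelled out. The paper sidesteps this entirely by differentiating the length in $\tau$ rather than comparing endpoint values: a first-variation computation gives $\partial_\tau\ell(\gamma_\tau)\big|_{\tau=0}=2n\,h(r')$, where $h$ is an Abel-type transform of $f=\partial_\tau c_\tau\big|_{\tau=0}$. If $f\not\equiv0$ then $h\not\equiv0$ by Abel injectivity, so by density of stable depths one finds a periodic family whose length has nonzero $\tau$-derivative, and the countability argument gives the contradiction directly. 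This is both cleaner and avoids any labeling issues---your worry in the final paragraph is a red herring, since you only ever track one length at a time.

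For the interface location, your asymptotic proposal is plausible but soft; the paper instead uses the \emph{radial} basic rays (type~2): once $c_\tau$ is known to be frozen on the top layer, the length of the radial segment from $r=1$ to $r=r_1(\tau)$ satisfies $\ell'(0)=c_0(r_1(0))^{-1}r_1'(0)$, and the same countability argument forces $r_1'\equiv0$.
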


%{\color{red} [Add Remark on the interpretation / use of $S(\tau)$.]}
The theorem has two immediate corollaries.
The first one concerns the whole length spectrum, and the second one the length spectrum of two velocity profiles.

\begin{corollary}[Length spectral rigidity of a layered planet with moving interfaces]
\label{cor:lsp-rig}
Fix any $\eps>0$ and $K\in\N$, and let $c_\tau(r)$ be an admissible family of profiles with discontinuities at $r_k(\tau)$ for all $k=1,\dots,K$.
Suppose that the length spectrum for each $c_\tau$ is countable\joonas{Perhaps this countability can be proven from other assumptions. Basically we would prove that there are a countable number of topological types of an orbit and each type has only countably many instances modulo rotations. Cf. \cite[Lemma 4.14]{HIKRigidity}. I feel that a proof is either trivial or very hard.} in the 
%annulus $A(1,r_K(\tau))$.
ball $\bar B(0,1)$.

Let $\lsp(\tau)$ and $\blsp(\tau)$ denote the length spectrum and the basic length spectrum of 
%the annulus $A(1,r_K(\tau))$
the ball $\bar B(0,1)$
with the velocity profile $c_\tau$.
Suppose either one of the following holds:
\begin{itemize}
\item
$\lsp(\tau)=\lsp(0)$ for all $\tau\in(-\eps,\eps)$.
\item
$\blsp(\tau)=\blsp(0)$ for all $\tau\in(-\eps,\eps)$.
\end{itemize}
Then $c_\tau=c_0$ and $r_k(\tau)=r_k(0)$ for all $\tau\in(-\eps,\eps)$ and $k=1,\dots,K$.
\end{corollary}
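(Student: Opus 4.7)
The plan is to reduce Corollary \ref{cor:lsp-rig} directly to Theorem \ref{thm:blspr-multilayer} by choosing the countable ``noise'' set $S(\tau)$ in a trivial way appropriate to each hypothesis. The key observation is that the basic length spectrum $\blsp(\tau)$ is by definition a subset of the full length spectrum $\lsp(\tau)$, and under the countability hypothesis on $\lsp(\tau)$ the complement $\lsp(\tau)\setminus\blsp(\tau)$ is itself a countable subset of $\RR$. Consequently, Theorem \ref{thm:blspr-multilayer} is flexible enough to absorb the non-basic portion of the length spectrum as admissible noise.

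For the first bullet ($\lsp(\tau)=\lsp(0)$ for all $\tau\in(-\eps,\eps)$), I would set
\[
S(\tau)\coloneqq \lsp(\tau)\setminus \blsp(\tau).
\]
Each $S(\tau)$ is countable because $\lsp(\tau)$ is assumed countable, and by construction $\blsp(\tau)\cup S(\tau)=\lsp(\tau)$. The hypothesis $\lsp(\tau)=\lsp(0)$ therefore gives $\blsp(\tau)\cup S(\tau)=\blsp(0)\cup S(0)$ for all $\tau$. Since $\blsp(\tau)\subseteq\lsp(\tau)$ is also countable, all hypotheses of Theorem \ref{thm:blspr-multilayer} are met, and we conclude $c_\tau=c_0$ and $r_k(\tau)=r_k(0)$.

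For the second bullet ($\blsp(\tau)=\blsp(0)$ for all $\tau$), simply take $S(\tau)=\emptyset$. Then $\blsp(\tau)\cup S(\tau)=\blsp(\tau)=\blsp(0)=\blsp(0)\cup S(0)$, and again Theorem \ref{thm:blspr-multilayer} applies to give the conclusion.

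There is essentially no obstacle here: all the analytic and geometric work has been concentrated in the main theorem, and the corollary is an application packaged for the two most natural rigidity hypotheses. The only point that needs a brief justification is the countability of $\blsp(\tau)$, which follows automatically from the countability assumption on $\lsp(\tau)$ via the inclusion $\blsp(\tau)\subseteq\lsp(\tau)$.
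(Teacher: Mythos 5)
Your proof is correct and follows essentially the same route as the paper: both reduce the corollary to Theorem~\ref{thm:blspr-multilayer} by choosing the countable noise set, the paper taking $S(\tau)=\lsp(\tau)$ while you take $S(\tau)=\lsp(\tau)\setminus\blsp(\tau)$ and $S(\tau)=\emptyset$, which is an immaterial difference (and your choice of $S(\tau)=\emptyset$ handles the second bullet cleanly).
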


\begin{corollary}[Length spectral rigidity with two polarizations]
\label{cor:2-speeds}
Fix any $\eps>0$ and $K\in\N$, and let $c^i_\tau(r)$ with both $i=1,2$ be an admissible family of profiles with discontinuities at $r_k(\tau)$ for all $k=1,\dots,K$.

Consider all periodic rays which are geodesics within each layer and satisfy the usual reflection or transmission conditions at interfaces, but which can change between the velocity profiles $c^1_\tau$ and $c^2_\tau$ at any reflection and transmission.
Suppose that the length spectrum of this whole family of geodesics, denoted by $\lsp(\tau)$, is countable in the 
%annulus $A(1,r_K(\tau))$.
ball $\bar B(0,1)$.

If $\lsp(\tau)=\lsp(0)$ for all $\tau\in(-\eps,\eps)$, then $c^i_\tau=c^i_0$ for both $i=1,2$ and $r_k(\tau)=r_k(0)$ for all $\tau\in(-\eps,\eps)$ and $k=1,\dots,K$.
\end{corollary}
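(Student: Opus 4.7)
The plan is to reduce the proof to two separate applications of Theorem~\ref{thm:blspr-multilayer}, one for each wave-speed profile $c^i_\tau$. The key observation is that a closed basic ray associated with the single-speed model governed by $c^i_\tau$ stays inside a single annular layer and therefore never interacts with any interface. Such a ray is automatically a valid periodic ray in the combined two-speed family described in the corollary: it simply fails to undergo any polarization change because it never reaches an interface where a change could occur. Consequently, writing $\blsp^i(\tau)$ for the basic length spectrum of the single-profile problem associated with $c^i_\tau$, we obtain the inclusion
\[
\blsp^1(\tau) \cup \blsp^2(\tau) \subseteq \lsp(\tau),
\]
and in particular each $\blsp^i(\tau)$ is countable.

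For each fixed $i\in\{1,2\}$, I would apply Theorem~\ref{thm:blspr-multilayer} to the admissible family $c^i_\tau$ with the ``noise'' set chosen to be
\[
S^i(\tau) := \lsp(\tau) \setminus \blsp^i(\tau),
\]
which is countable since $\lsp(\tau)$ is countable by hypothesis. Then
\[
\blsp^i(\tau) \cup S^i(\tau) = \lsp(\tau) = \lsp(0) = \blsp^i(0) \cup S^i(0),
\]
using the rigidity hypothesis $\lsp(\tau) = \lsp(0)$. Theorem~\ref{thm:blspr-multilayer} therefore yields $c^i_\tau = c^i_0$ and $r_k(\tau) = r_k(0)$ for every $\tau\in(-\eps,\eps)$ and $k=1,\dots,K$. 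Running this argument for both $i=1$ and $i=2$ gives the full conclusion.

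There is no substantial obstacle: the entire difficulty has been absorbed into Theorem~\ref{thm:blspr-multilayer}, whose formulation was deliberately crafted to tolerate arbitrary countable ``noise'' in the spectrum — and here the extra rays produced by the other polarization and by mode-converting trajectories are exactly such noise. The only point needing care is the verification that basic rays for each single profile embed into the two-polarization family with the \emph{same} lengths, which is immediate from the fact that basic rays live inside one open layer and are governed by the metric of a single profile there, so the induced arclength coincides. The countability of $\lsp(\tau)$, stated as a hypothesis of the corollary, ensures that the noise sets $S^i(\tau)$ are admissible inputs to Theorem~\ref{thm:blspr-multilayer}, which is precisely why that theorem was formulated in the flexible form it was.
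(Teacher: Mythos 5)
Your proof is correct and follows essentially the same route as the paper, whose proof of Corollary~\ref{cor:2-speeds} is precisely to study the basic length spectra of $c^1_\tau$ and $c^2_\tau$ independently and apply Theorem~\ref{thm:blspr-multilayer} with the countable noise set $S(\tau)=\lsp(\tau)$ (your choice $S^i(\tau)=\lsp(\tau)\setminus\blsp^i(\tau)$ is an immaterial variant). One minor correction: basic rays of type~1 and radial basic rays do reflect off an interface or the outer boundary, so they do interact with $\Gamma$; the right justification is that a polarization change at a reflection is optional in the combined family, so a non-converting basic ray for $c^i_\tau$ is still a valid periodic ray of that family with the same length, which is all your inclusion $\blsp^i(\tau)\subseteq\lsp(\tau)$ requires.
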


The ``noise'' set $S(\tau)$ of Theorem~\ref{thm:blspr-multilayer} plays an important role.
One metric is recovered at a time, and all rays that have one leg following the other metric or different legs in different layers are treated as noise.

The proofs of the corollaries are immediate:
\begin{itemize}
\item For Corollary~\ref{cor:lsp-rig}, simply let $S(\tau)=\lsp(\tau)$.
\item For Corollary~\ref{cor:2-speeds}, study the basic length spectra of the profiles $c^1(\tau)$ and $c^2(\tau)$ independently of each other and let again $S(\tau)=\lsp(\tau)$.
\end{itemize}

\begin{remark}
\label{rmk:variations}
Some variations of Theorem~\ref{thm:blspr-multilayer} and its corollaries hold true.
One can introduce an impermeable core and work with a finite number of layers that do not exhaust the ball.
One can choose to include or exclude rays with reflections from the lower boundary $r_K(\tau)$ and the results remain true for this smaller length spectrum, at least when $r_K$ is independent of $\tau$.
%Similarly, one can include the whole ball $\bar B(0,1)$, so that $r_K(\tau)$ marks the lowest interface around the center rather than the lower boundary of the domain.
The proofs are immediate adaptations of the one we give.
\end{remark}

Recall the Neumann spectrum of the Laplace–Beltrami operator is denoted $\spec(c)$, where we impose Neumann-type
boundary conditions (we can allow for other boundary conditions cf. section \ref{sec: connect to LB}).

\begin{theorem}[Spectral rigidity with moving interfaces]
\label{t: spectral rigiditiy}
Fix any $\eps>0$ and $K\in\N$, and let $c_\tau(r)$ be an admissible family of profiles with discontinuities at $r_k(\tau)$ for all $k=1,\dots,K$.
Suppose that the length spectrum for each $c_\tau$ is countable in the 
%annulus $A(1,r_K(\tau))$.
ball $\bar B(0,1) \subset \RR^3$.
Assume also that the length spectrum satisfies the principal amplitude injectivity condition and the periodic conjugacy condition.

Suppose
$\spec(\tau)=\spec(0)$ for all $\tau\in(-\eps,\eps)$.
Then $c_\tau=c_0$ and $r_k(\tau)=r_k(0)$ for all $\tau\in(-\eps,\eps)$ and $k=1,\dots,K$.
\end{theorem}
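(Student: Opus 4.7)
The plan is to reduce Theorem~\ref{t: spectral rigiditiy} to the length spectral rigidity result Theorem~\ref{thm:blspr-multilayer} by extracting information about the basic length spectrum from the Neumann spectrum $\spec(\tau)$. The bridge is the trace formula (Proposition~\ref{prop:Trace Formula}): since assumptions (A1), (A2), and (A4) are in force for every member of the admissible family, the wave trace $\Tr\cos(t\sqrt{-\Delta_{c_\tau}})$ is a tempered distribution on $\RR_t$ whose singular support contains $\blsp(\tau)$ and is contained in $\lsp(\tau)$. The principal amplitude injectivity condition (A2) is precisely what ensures that the contributions from distinct basic closed rays of equal length do not cancel at leading order, so that each length in $\blsp(\tau)$ is genuinely visible as a singularity of the wave trace.

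The hypothesis $\spec(\tau)=\spec(0)$ implies that the two wave traces differ only by a smooth function of $t$, since the wave trace is determined by the Laplace eigenvalues alone. Consequently their singular supports coincide, so there is a common set
\begin{equation}
\Sigma := \operatorname{sing\,supp}\bigl(\Tr\cos(t\sqrt{-\Delta_{c_\tau}})\bigr)
\end{equation}
that is independent of $\tau \in (-\eps,\eps)$ and satisfies $\blsp(\tau) \subseteq \Sigma \subseteq \lsp(\tau)$ for every $\tau$. Because $\lsp(\tau)$ is assumed countable, so is $\Sigma$.

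Now define the ``noise'' set $S(\tau) := \Sigma \setminus \blsp(\tau)$, which is a countable subset of $\RR$. By construction
\begin{equation}
\blsp(\tau) \cup S(\tau) \;=\; \Sigma \;=\; \blsp(0) \cup S(0) \qquad \text{for all } \tau \in (-\eps,\eps).
\end{equation}
Since $c_\tau$ is an admissible family, assumptions (A3) and (A4) hold, and Theorem~\ref{thm:blspr-multilayer} applies with this choice of $S(\tau)$. It forces $c_\tau = c_0$ together with $r_k(\tau) = r_k(0)$ for every $\tau \in (-\eps,\eps)$ and every $k=1,\dots,K$, which is the conclusion sought.

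The hard part of this route lies elsewhere: building the trace formula for a manifold with moving interfaces (where one must handle reflection/transmission at interfaces, glancing rays, and caustics of broken geodesics) and showing $\tau$-uniform non-cancellation of the principal amplitudes along basic closed rays. Once those are granted, the reduction to Theorem~\ref{thm:blspr-multilayer} above is essentially formal, and the noise set $S(\tau)$ painlessly absorbs non-basic periodic orbits and any cross-layer length coincidences that might otherwise confound the argument.
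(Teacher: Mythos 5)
Your proposal is correct and follows essentially the same route as the paper: use the trace formula (Proposition~\ref{prop:Trace Formula}) together with the principal amplitude injectivity condition to conclude that the $\tau$-independent singular support $\Sigma$ of the wave trace satisfies $\blsp(\tau)\subseteq\Sigma\subseteq\lsp(\tau)$, then feed a suitable countable noise set into Theorem~\ref{thm:blspr-multilayer}. The only cosmetic difference is that the paper takes $S(\tau)=\Sigma$ itself rather than $\Sigma\setminus\blsp(\tau)$, which yields the same union and the same conclusion.
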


\begin{proof}
The spectrum determines the trace of the Green's function by Proposition~\ref{prop:Trace Formula}.
As $\spec(\tau)=\spec(0)$ for all $\tau$, the trace is independent of $\tau$ and so are its singularities.
The singularities are contained in the set $\lsp(\tau)$ by Proposition~\ref{prop:Trace Formula}.
We apply Theorem~\ref{thm:blspr-multilayer} to pass from length spectral information to geometric information.

We set $S(\tau)$ to be the singular support of the trace.
Every length of a \simp{} periodic broken ray only appears once in the whole length spectrum by assumption, whence there is a singularity for every \simp{} length.
Therefore $\blsp(\tau)\subset S(\tau)$.
Now Theorem~\ref{thm:blspr-multilayer} implies the claim.
\end{proof}

Planets are full balls, but Theorem~\ref{t: spectral rigiditiy} holds for an annulus as well.
Cf. Remark~\ref{rmk:variations}.

\begin{remark}[Implications for planets] 
The theorem is stated for a scalar operator (the Laplace-Beltrami operator), but the proof extends to the radial elastic case and thus, round planets by considering the toroidal modes associated with the shear wave speed and their corresponding eigenfrequencies.
The proof of the theorem is using a trace formula to recover the \simp{} length spectrum from the spectrum and employ the length spectral rigidity results.
See sections \ref{s: toroidal modes} and \ref{sec: connect to LB}, where we initially start the proof of the trace formula using toroidal modes and show why the argument is identical for the scalar Laplace-Beltrami operator. In that case, we work inside an annulus with an inner boundary representing the core-mantle boundary for more generality. By considering toroidal modes, the argument for proving a trace formula for spheroidal modes that involve two wave speeds becomes more transparent and is discussed in section \ref{s: spheroidal modes}. Hence, by considering the spectrum of the radial isotropic elastic operator with natural boundary conditions, our arguments may be generalized to recover both elastic wave speeds using Corollary \ref{cor:2-speeds}.
\end{remark}

\begin{remark}\label{rem: dimension}
We note that the dimension is irrelevant for the length spectral rigidity results; if the sound speed is fixed, the length spectrum is independent of dimension. For spectral rigidity, we assume dimension three to ease the computation of the trace formula since it allows us to compute the leading order asymptotics of the eigenfunctions explicitly.
\end{remark}

This paper will be essentially divided into parts. The first part is proving length spectral rigidity. In the second part, we prove the trace formula in our setting, and as a corollary, we prove the spectral rigidity theorem.

\subsection{Reasonableness of radial models}
\label{sec:reasonable-radial}

Spherically symmetric Earth models are widely used in geophysics and there are a number of results showing how well such models fit seismic data. The $P$ and $S$ wave speeds are denoted $c_P$ and $c_S$.
%The Preliminary Reference Earth Model (PREM) was generated by matching eigenfrequencies \cite{DZIEWONSKIPREM} - here, we prove rigidity of models of the type of PREM (SNREI). 
There are several important questions to address when using PREM to analyze seismic data.

\subsubsection*{
Question 1. What is the uncertainty in the best-fitting spherical average profile?}

The classic reference for this question is Lee and Johnson in \cite{Lee1984}. They suggest that the extremal bounds in the upper mantle are around 0.6 km/s (around 6 \%) for $c_P$ and 0.4 km/s for $c_S$ (around 7 \%). In the lower mantle, it is around 0.18 km/s (around 2 \%) for $c_P$, and 0.14 km/s (around 2 \%) for $c_S$. Note that the bounds increase in the lowermost mantle and especially in the crust.

\subsubsection*{
Question 2. What is the standard deviation of the residuals to the spherical average model, as a function of depth?}

In theory, residuals can be calculated as a function of depth for any global tomographic model. However, this information is not always presented. A good, thorough, recent example is the SP12RTS model \cite{koelemeijer2016a}. Their figure 9a shows that variations are smallest in the mid-mantle (standard deviations of around 0.1 \% for $c_P$, 0.2 \% for $c_S$) and increase towards the surface (to around 1.0 \% for both $c_P$ and $c_S$) and towards the CMB (to around 0.3 \% for $c_P$, and 0.5 \% for $c_S$).

\subsubsection*{
Question 3. What is the measurement uncertainty in the wave speed at a given point in a typical tomographic model?
}
 Very few groups have given robust estimates of point-wise measurement uncertainties, and the best study to date could be the Bayesian study by Burdick and Leki\'{c} in \cite{BurdickLekic17}. They find the standard deviation in estimates of 0.25 \% $dc_P/c_P$ (so, for example the anomaly in California at 10 km depth might be 1.00 \% +/- 0.25 \%). We are not aware of any similar estimates for $c_S$, but they would most likely be more uncertain.

\subsubsection*{Question 4. In a given region, what is the typical variation in the absolute wavespeed?
}
Near Earth's surface, there are huge lateral variations in wavespeed, for example between continental and oceanic regions (for example, at a depth of 50 km, mountain belt may have a $c_P$ of 6.1 km/s, while an ocean basin may have a $c_P$ of 8.1 km/s at the same radial coordinate, a variation of 25 \%. However, within a given region type (e.g. 'island arc' or 'mountain belt'), typical variations around 0.3 km/s for $c_P$ (an authoritative reference is \cite{MooneyLaske98}; see their fig. 3b), which is about 5 \%. Variations in $c_S$ can be larger because $c_S$ is more strongly affected by fluids and temperature (partial melting and anelasticity). The reference given does not address $c_S$.

\section{Unraveling assumptions}

Let us give the relevant definition and assumptions on the geometry of the problem.
Recalling from the previous section, fix $K \in \mathbb N$ and let $r_k \in (0, \surf)$ so that $\surf=:r_0 > r_1 > \cdots > r_K$.
Assume $c(r)$ has jump discontinuities at each $r_k \in (0, \surf)$. Let $\Gamma = \bigcup_k \{ r = r_k\}$ be the collection of interfaces together with $\p M$, and denote $\Gamma_k := \{ r = r_k\}$. We view $M$ as a Riemannian manifold with (rough) metric $g = c^{-2} dx^2$.
We showed in \cite{HIKRigidity} that any rotation symmetric Riemannian manifold with the Herglotz condition is of this form. The same is true in the presence of jumps with essentially the same proof we used in the smooth setting.

\subsection{Geodesics in a spherically symmetric model with interfaces}

%On the $n$-dimensional manifold $M$ the phase space of the unit speed geodesic flow has dimension $2n-1$.
On the three-dimensional manifold $M$ the phase space of the unit speed geodesic flow has dimension $5$.
Due to rotation symmetry most of these dimensions are superfluous, and the dimension of the reduced phase space needed to represent all geodesics up to isometries of the manifold is only $2$.
The dimension of the ``reduced phase space'' is $2$ for any ambient dimension $2$ or higher.

Two natural coordinates in this space are the radius $r$ (Euclidean distance to the origin) and the angular momentum denoted as $p$.
Any geodesic is either radial or is contained in a unique plane through the origin, so it suffices to study geodesics in $2$-dimensional disks.
In dimension two, points on the disk can be described with polar coordinates $(r,\theta)$, and a geodesic $\gamma$ can be parameterized as $t \mapsto (r(t), \theta(t))$.
We then have the explicit formula $p = p_{\gamma} = c(r(t))^{-2}r(t)^2\theta'(t)$.
The angular momentum (often called the \emph{ray parameter} associated to $\gamma$) $p$ is conserved, even across discontinuities in the metric.
Therefore trajectories of the geodesic flow in the $(r,p)$-plane are horizontal lines.

Much of the geometry is conveniently encoded in the function $\rho(r)=r/c(r)$.
At a turning point (where $\dot r=0$) we have $\abs{p}=\rho(r)$, and elsewhere $\abs{p}<\rho(r)$.
Therefore the reduced phase space is the subgraph of the function $\rho\colon(0,1]\to(0,\infty)$.
The classical Herglotz condition states that $\rho'(r)>0$ for all $r$.
Three examples are given in figure~\ref{fig:profiles}.

\begin{figure}
    \centering
    \begin{overpic}[width=0.8\textwidth]{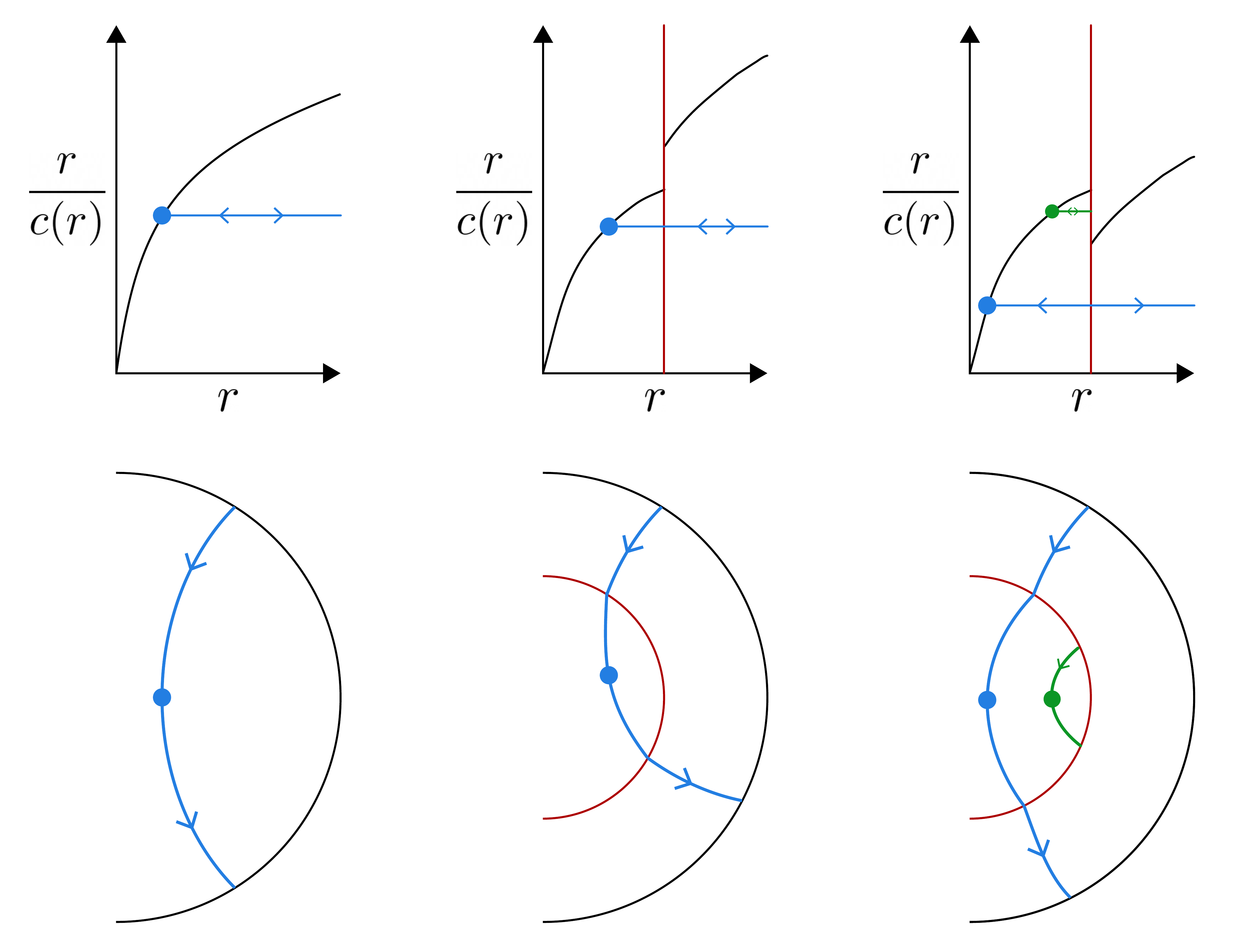}
    \put (0,73) {(a)}
    \put (35,73) {(b)}
    \put (70,73) {(c)}
    \end{overpic}
    \caption{Three different velocity profiles described in terms of the function $\rho(r)=r/c(r)$. Dashed vertical lines connect the plot with the manifold. The reduced phase space of the geodesic flow is the subgraph of the function $\rho$ and the trajectories are horizontal lines. The Herglotz condition implies that $\rho$ is increasing and thus all horizontal lines starting at the graph can be extended all the way to $r=1$ while staying under the graph. Therefore rays starting at any depth meet the surface. The classical Herglotz condition is satisfied in case (a) above. In case (b) an extended Herglotz condition is satisfied, where $\rho'>0$ in the sense of distributions. The jump at the interface ({\color{red}red}) has to be positive for this to hold. In case (c) the smooth segments satisfy the Herglotz condition but the jump is in the wrong direction. Therefore rays diving just below the corresponding interface ({\color{green}green}) are trapped by total internal reflection. Even in the presence of discontinuities the condition $\rho'>0$ implies that there is no trapping, and jumps in the wrong direction necessarily imply trapping. The Herglotz condition is a convexity condition on the phase space.}
    \label{fig:profiles}
\end{figure}

\begin{definition}
A (unit-speed) \emph{broken geodesic} or \emph{ray} in $(M, g)$ is a continuous, piecewise smooth path $\gamma: \RR \supset I \to M$ such that each smooth piece is a unit-speed geodesic with respect to $g_c$ on $M\setminus \Gamma$, intersecting the interfaces $\Gamma$ at a discrete set of times $t_i \in I$. Furthermore, at each $t_i$, if the intersection is
transversal, then Snell's law for reflections and refraction of waves is satisfied.  More precisely, a
broken geodesic (parameterized by a time variable) can be written as $\gamma : (t_0, t_1) \cup (t_1, t_2) \cup \cdots \cup
(t_{k-1}, t_k) \to M \setminus \Gamma$, which is a sequence of geodesics concatenated by reflections and refractions obeying Snell's law: for $i = 1, \dots, k-1,$ 
\[
\gamma(t_i) \in \Gamma, 
\qquad \qquad (d\iota_\Gamma)^* (\gamma(t_i),\dot \gamma(t_i^-))  = (d\iota_\Gamma)^* (\gamma(t_i), \dot \gamma(t_i^+)),
\]
where $\iota_\Gamma: \Gamma \to M$ is the inclusion map and $\dot \gamma(t_i^\mp) = \lim_{t \to t_i^\mp} \gamma(t)$. Each restriction 
$\gamma\restriction_{(t_i,t_{i+1})}$ is a maximal smooth geodesic that we call a \emph{\branch{}} of $\gamma$. For each $i$, note that $\gamma(t_i) \in \Gamma_{k_i}$ for some $k_i$.  One can view $\gamma$ as a concatenation of all of its \branch{}s. A \branch{} $\gamma\restriction_{(t_i, t_{i+1})}$ is \emph{reflected} if the inner product of $\dot \gamma(t_i^+)$ and $\dot \gamma(t_i^-)$ with a normal vector to $\Gamma_{k_i}$ have opposite signs. If they have the same sign, it is a \emph{transmitted \branch{}}. If $\dot \gamma(t_i^+)$ and $\dot \gamma(t_i^-)$ are equal, then $\gamma\restriction_{(t_{i-1},t_{i+1})}$ is a \emph{grazing \branch{}} or ray; in this case,  $\dot \gamma(t_i^\pm)$ is tangent to $\Gamma$. The only other situation is when  $\dot \gamma(t_i^+)$ is tangent to $\gamma$ while  $\dot \gamma(t_i^-)$ is not (or vice versa); in this case $\gamma\restriction_{(t_i, t_{i+1})}$ is called a \emph{gliding ray} or \branch{} because it travels along $\Gamma_{i_k}$. A ray with no gliding \branch{}s will be called a non-gliding ray.
Our results will also extend to the elastic setting, which has two wave speeds $c_P$ and $c_S$ corresponding to pressure waves and shear waves. In this case, the definition of broken rays is identical except that each \branch{} can either be a geodesic with the metric $g_{c_P}$ or $g_{c_S}$.
\end{definition}

We follow the discussion and notation in \cite[Section 2.1]{HIKRigidity}. Assume for the moment $n=2$ since due to spherical symmetry, rays are confined to a disk, and equip the annulus $M=A(1,r)$ with spherical coordinates $\theta,r$. Fix a broken geodesic $\gamma$ whose endpoints are both located at a particular interface $r_i$ for some $i \in \{0,\dots, K\}$.
We denote $\alpha= \alpha(p)$ to be the epicentral distance between both endpoints of $\gamma$, where $p = \pg$ is the ray parameter associated to $\gamma$. It is the angular distance that $\gamma$ travels.
It may happen that $\alpha(p) > 2\pi$ if the geodesic winds around the origin several times.

Each \branch{} can be parameterized as 
\begin{equation}
t \mapsto (r(t),\theta(t))
\end{equation} over some maximal interval $I$ associated to the \branch{}. Using both of the conserved quantities
$  c(r(t))^{-2}[r'(t)^2 + r(t)^2\theta'(t)^2 = 1$ and $p =c(r(t))^{-2}r(t)^2\theta'(t)$ (angular momentum) we can compute $\alpha_\gamma$ explicitly following \cite[Equation (2.2)]{HIKRigidity}.

Let $R^*$ be the smallest radius that $\gamma$ passes through, and there is a unique $k$ such that $r_{k} \leq R^* < r_{k-1}$. We refer to $R^*$ as the \emph{radius} of $\gamma$ and it may coincide with an interface or a boundary. Next, $\gamma$ will have a certain number of \branch{}s in each of the annular regions $A(r_{k-1},r_k)), A(r_{k-2}, r_{k-1}),\dots A(r_0,r_1)$. Since $\gamma$ might stay just within a single (or more) annular region, there could be zero \branch{}s in one or more of the annuli. By definition of $R^*$, $\gamma$ has no \branch{}s in $A(r_{k},r_K)$. We denote $n_j$ to be half of the number of \branch{}s of $\gamma$ in $A(r_{j-1},r_j)$. 
Next we introduce a certain quantity
$\wkb^2:= c(r)^{-2}-r^{-2}p^2$.

Analogous to \cite{HIKRigidity}, the length of a broken geodesic with only transmitted \branch{}s, starting in $r = r_0$ and ending at $r = \surf$ is an integer multiple of the quantity
\begin{equation}\label{e: transmitted length L}
L(r_0,p) := 
\int_{r_0}^{\surf} \frac{1}{c(r')^2\wkb(r';p)} \dd r'
\end{equation}
If $r_0 = R^*$ is the radius of $\gamma$, then $R^*$ is a function of $p$ and we will  write $L(p)$.
With this notation and using the computation for epicentral distance in \cite{HIKRigidity}, one can also find an explicit formula for $\alpha_\gamma(r):$ 
\begin{align}
\alpha(p) %&= \sum_{j=1}^{k-1} 2n_j\int^{r_{j-1}}_{r_{j}} \frac{r c(r')}{c(r) (r')^2}\left( 1 - \left(\frac{rc(r')}{r'c(r)}  \right) \right)^{-1/2} \dd r'
%\\&\quad
%+ 2 n_k\int_{R^*}^{r_{k-1}}\frac{r c(r')}{c(r) (r')^2}\left( 1 - \left(\frac{rc(r')}{r'c(r)}  \right) \right)^{-1/2} \dd r'
%\\
&= \sum_{j=1}^{k-1} 2n_j\int^{r_{j-1}}_{r_{j}} \frac{p}{(r')^2\wkb(r',p)} \dd r'
+2n_k \int^{r_{k-1}}_{R^*} \frac{p}{(r')^2\wkb(r',p)} \dd r'.
\end{align}

\begin{definition}
Following Hron in \cite{HronCriteria}, those waves which travel from the source to the receiver along different paths but
with identical travel-times are kinematically equivalent and are called kinematic analogs. We will refer to two different rays connecting source and receiver with the same ray parameter and travel time as \emph{kinematic analogs.} The groups of kinematic analogs may be further divided into subgroups of waves whose amplitude curves are identical. The members of this subgroup of phases may be called dynamic analogs. A sufficient condition for kinematic equivalence of two different broken rays $\gamma_1$ and $\gamma_2$ is they must have an equal number of \branch{}s in each layer along their paths. Since $\alpha(\pg)$ just measures the epicentral distance between the endpoints, $\alpha(\pg)$ will be the same for $\gamma$ and all of its kinematic analogs. We will say two non-gliding rays connecting source and receiver are \emph{dynamic analogs} if they have the same ray parameter, travel time, and inside each $A(r_k, r_{k-1})$, they have the same number of \branch{}s that are reflections starting at $\Gamma_k$, transmission starting at $\Gamma_k$, reflections starting at $\Gamma_{k-1}$ and transmissions starting at $\Gamma_{k-1}$. This is a sufficient condition to ensure that the principal amplitudes of the corresponding waves are identical. See \cite{HronCriteria} for examples and figures of kinematic and dynamic analogs.
\end{definition}

For length spectral rigidity, we only require what we term \emph{\simp} closed rays.

\begin{definition}[Basic rays]\label{simple closed ray}
A broken ray is called \emph{\simp{}} if either it stays within a single layer and all of its \branch{}s are reflections from a single interface (type 1), or it is a \emph{radial} ray contained in a single layer (type 2).
A \emph{radial} ray is defined to be a ray with zero epicentral distance.
It will necessarily reflect from two interface and cannot be type 1. 
The first type of \simp{} rays are analogs to the \emph{turning} rays in \cite{HIKRigidity} that formed $\lsp(c)$ in the notation there. A closed \simp{} ray of the first type will be periodic, stay within a singular layer, and only consists of reflected \branch{}s from a single interface. 
We have illustrated \simp{} and other periodic rays in Figure~\ref{fig:basic}.

The lengths of periodic \simp{} will suffice to prove length spectral rigidity so we define $\blsp(c)$ as the set of lengths of all periodic \simp{} rays.
\end{definition}

%\begin{comment}
\begin{figure}
    \centering
    \includegraphics[width=0.28\textwidth]{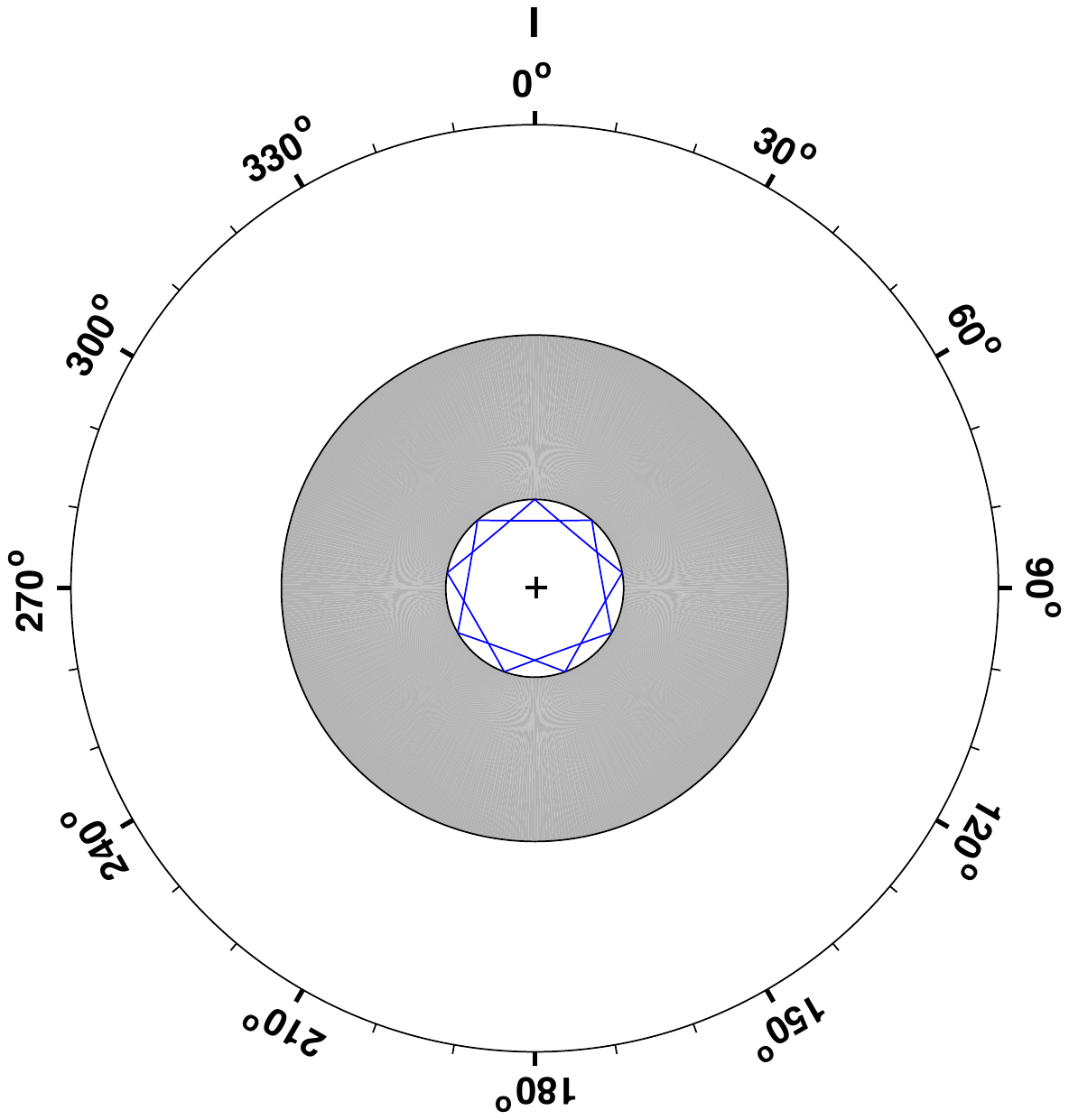}
    \includegraphics[width=0.28\textwidth]{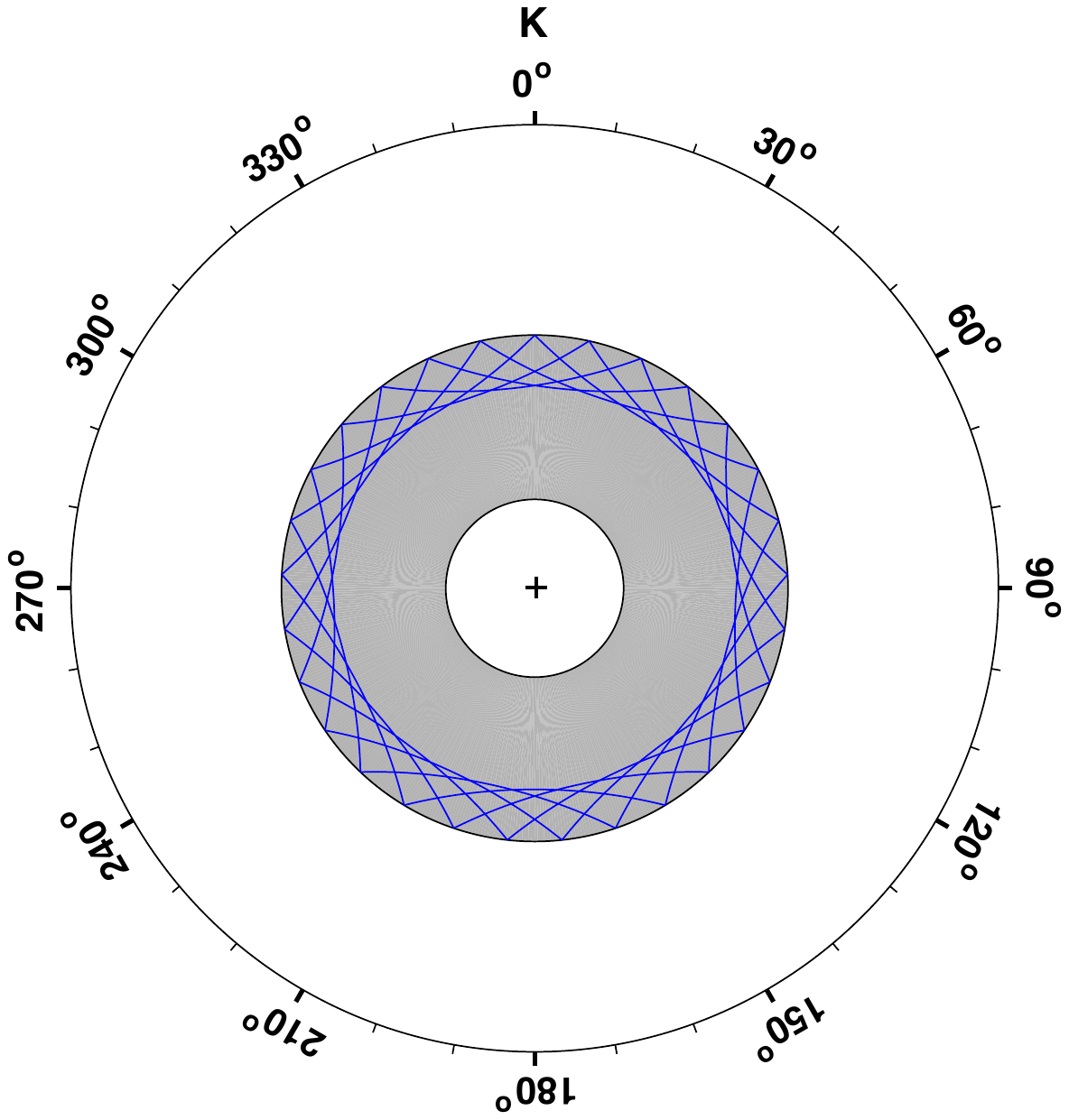}
    \includegraphics[width=0.28\textwidth]{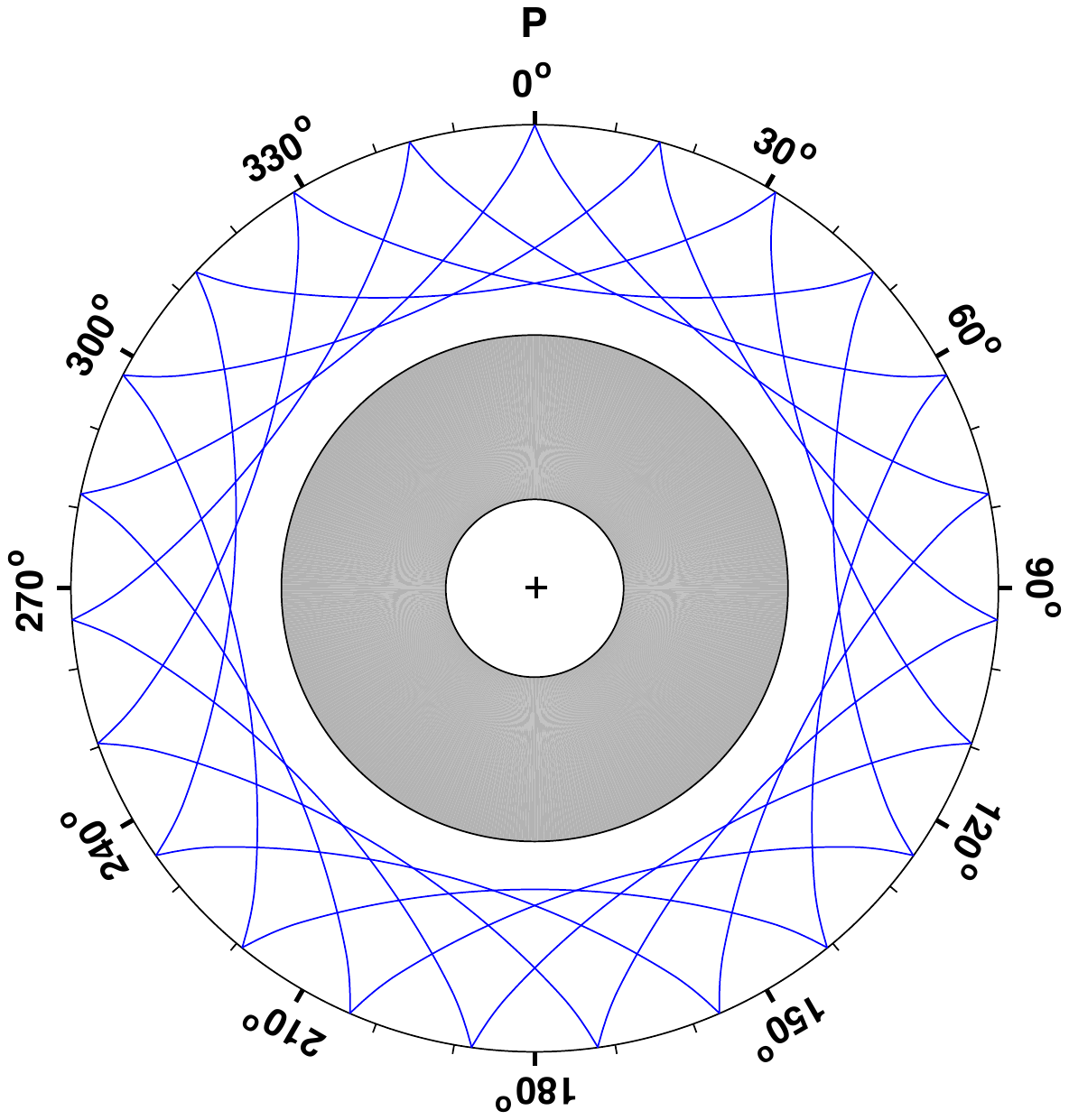} \\[0.25cm]
    \includegraphics[width=0.28\textwidth]{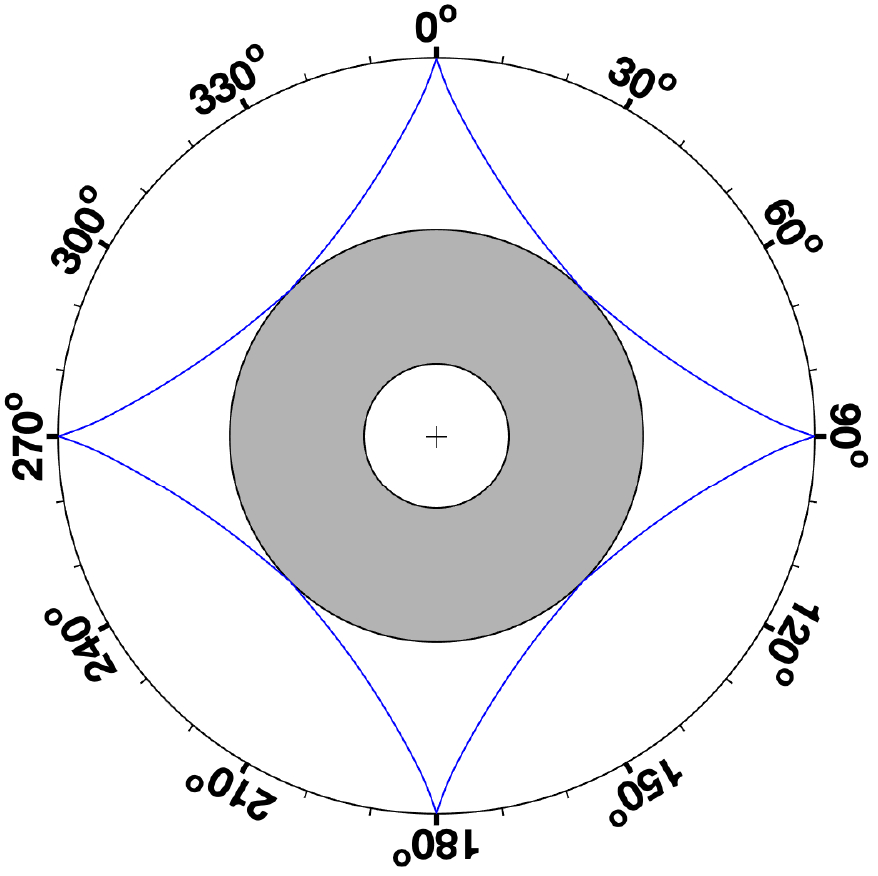}
    \includegraphics[width=0.28\textwidth]{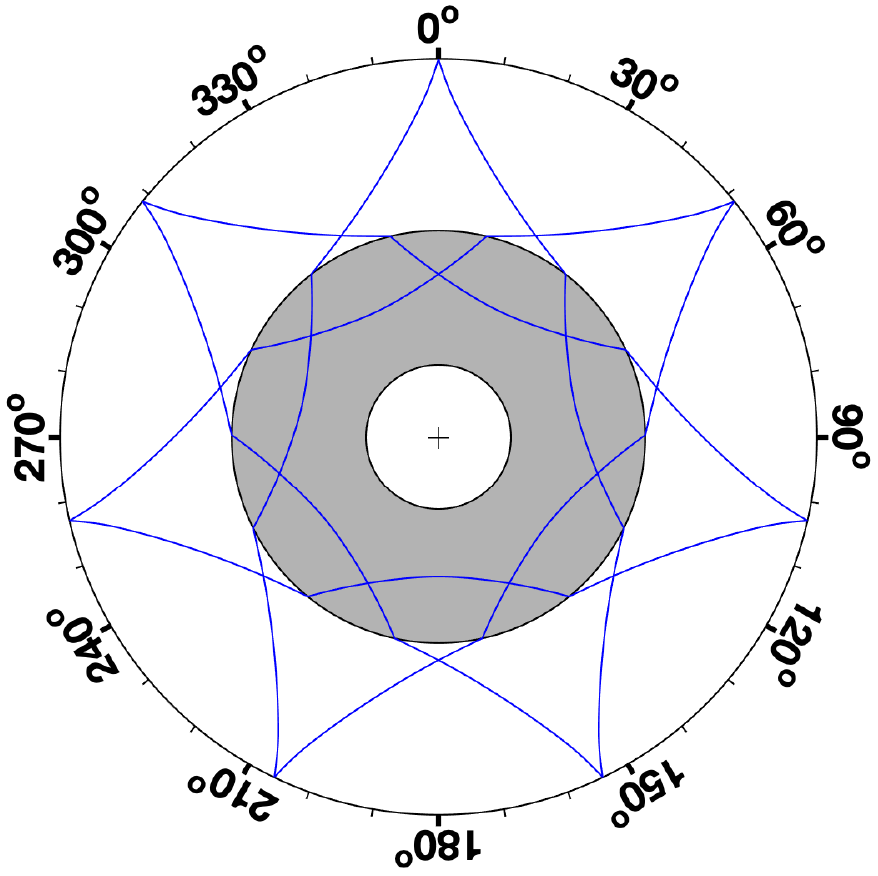}
    \includegraphics[width=0.28\textwidth]{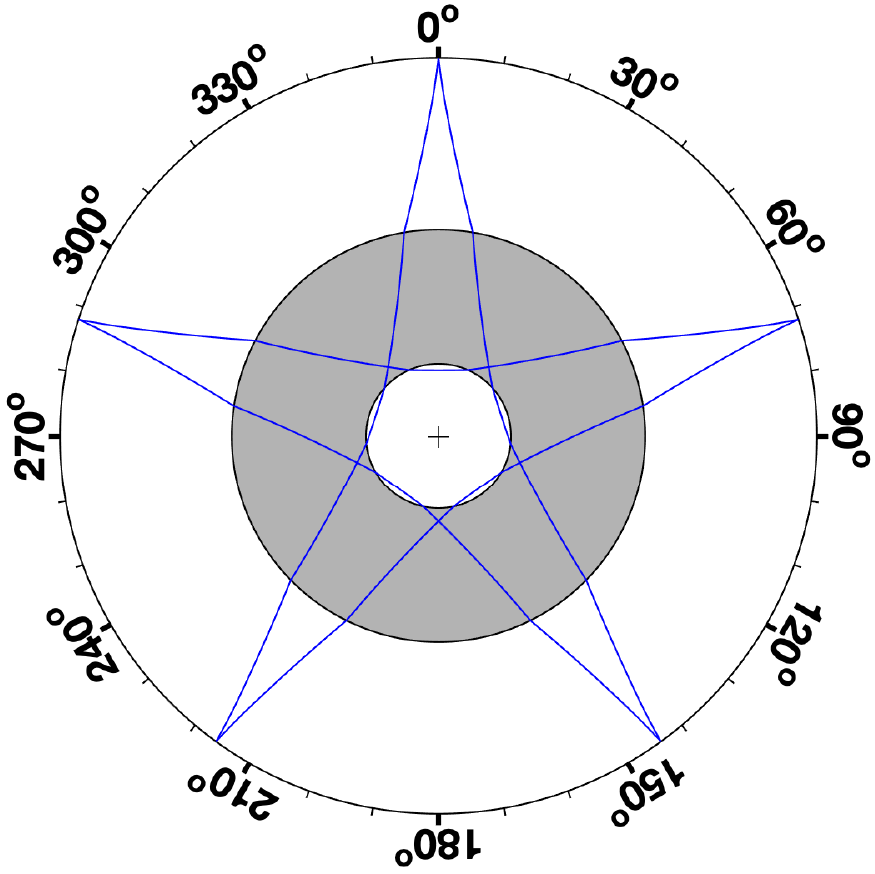} \\[0.25cm]
    \includegraphics[width=0.28\textwidth]{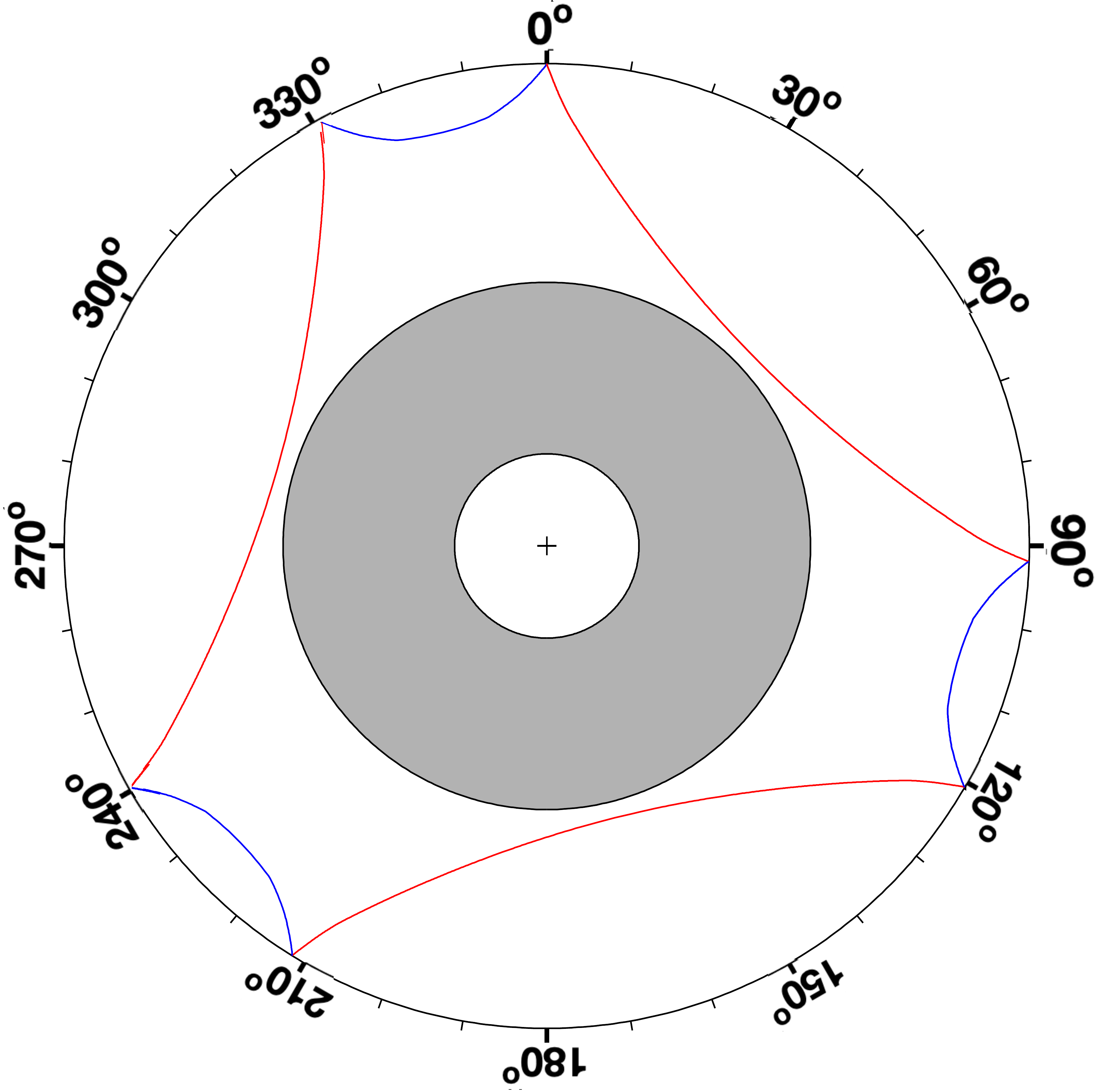}
    \includegraphics[width=0.28\textwidth]{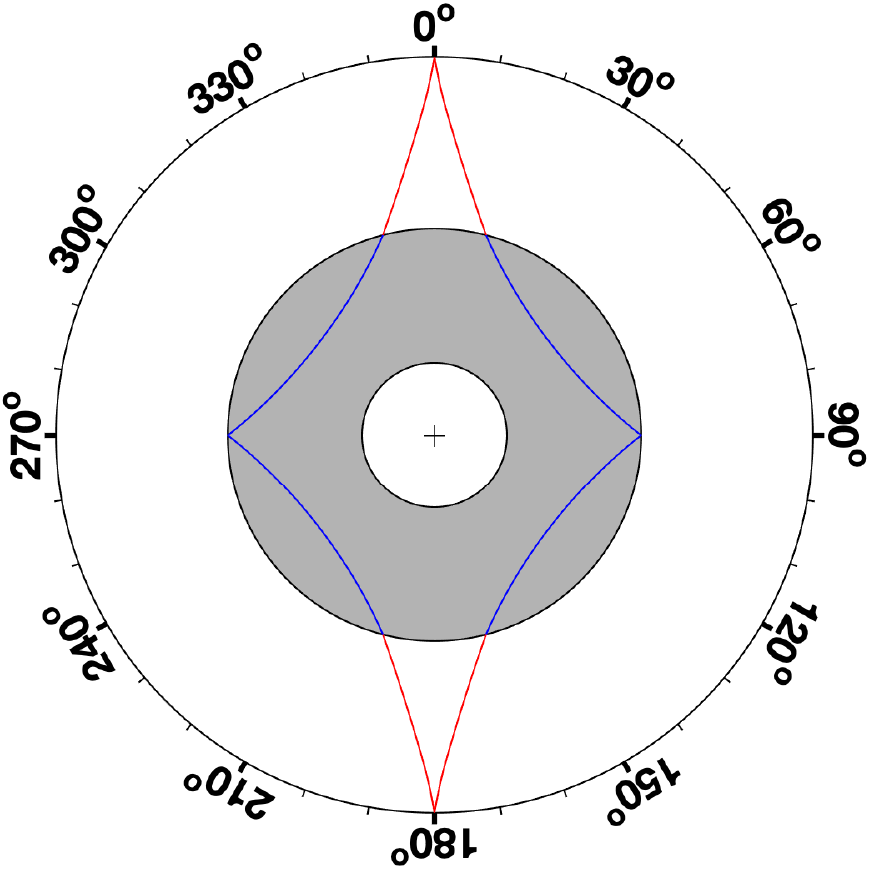}
    \includegraphics[width=0.28\textwidth]{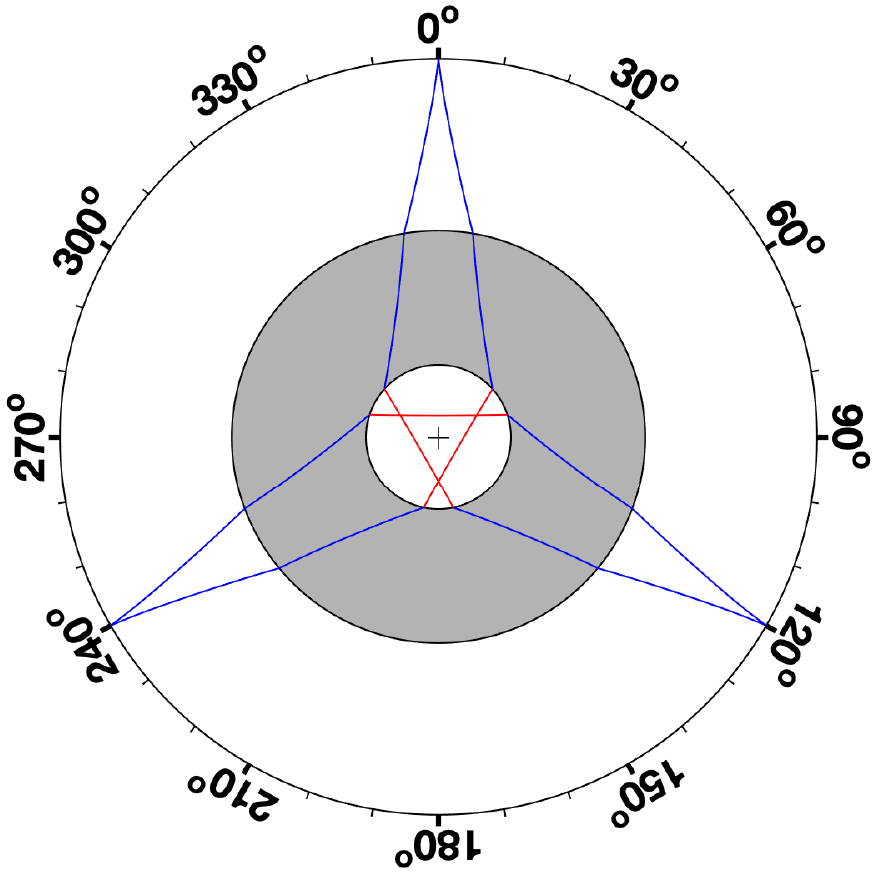}
    \caption{Some periodic rays in a radial planet with two interfaces (PREM). The top row illustrates examples of \textit{basic} rays (with different winding numbers), the middle row illustrates rays (left-to-right: PcP, PKPab, PKIKP) that are not basic and only probe the \textit{P} wave speed, and the bottom row also illustrates examples of non-basic rays (left-to-right: SP, SKKS, PKJKP) that probe both \textit{P} (in blue) and \textit{S} (in red) wave speeds. Acknowledgement: Chunquan Yu.}
    \label{fig:basic}
\end{figure}
%\end{comment}
Computing the length and epicentral distance of \simp{} rays is much simpler. Let $\gamma$ be a \simp{} ray with radius $R^*$, ray parameter $p$, and lies inside inside $A(r_{k-1},r_k)$. Then there is a unique $N(p) \in \mathbb{N}$ so that the length, denoted $T(p)$, of $\gamma$ is 
\[
T(p) = 2N(p)L(p) = 2N(p)\int_{R^*}^{r_{k-1}} \frac{1}{c(r')^2\wkb(r';p)} \dd r'
\]
and
\[
\alpha(p) =  2N(p)\int_{R^*}^{r_{k-1}} \frac{p}{(r')^2\wkb(r';p)} \dd r'. 
\]

\begin{definition}\label{d: ccc}
Consider geodesics in an annulus $A(a,b)$ equipped with a $C^{1,1}$ sound speed $c\colon(a,b]\to(0,\infty)$.
We say that $c$ satisfies the \emph{countable conjugacy condition} if there are only countably many radii $r\in (a,b)$ so that the endpoints of the
corresponding maximal geodesic $\gamma(r)$ are conjugate along that geodesic.
\end{definition}

We will only need the countable conjugacy condition with each layer, so we do not need a definition in the presence of discontinuities.
We point out that ``countable'' includes the possibility that the set be empty or finite.
Definition~\ref{d: ccc} is the same as the one given in~\cite{HIKRigidity}.

We need an analog to the clean intersection hypothesis used in \cite{HIKRigidity,Mel79} to prove a trace formula that also makes sure that the phase function is Bott-Morse nondegenerate when applying a stationary phase argument.

\begin{definition}\label{d: pcc}
We say that the radial wave speed $c$ satisfies the \emph{periodic conjugacy condition} if for each periodic, nongliding ray with a ray parameter $p$, $\p_p \alpha(p) \neq 0$. This condition ensures that the phase function in the stationary phase argument for computing the trace formula is Bott-Morse nondegenerate.
\end{definition}

\subsection{Gliding rays as limits} \label{s: gliding as limits}

\joonas{What follows is not fully rigorous but should do for the project review draft. It might benefit from a picture, but I wouldn't conjure one up this week unless my rough drawings will do. Do we want to have this as text or as a proper proposition or something?}

\maarten{Condense this further, just give the main ideas? (From today's chat -- J)}
\joonas{I would wait with this over the project review, unless it happens easily and there is time to spare.}

Consider a periodic broken ray $\gamma_0$ with a gliding \branch{} of positive length.
\joonas{It is possible to have zero-length gliding legs. In this case there is potential for gliding but none occurs. Those I can't approximate but those probably are not an issue either.}
We assume that gliding occurs at only one interface; this is ensured by the smooth Herglotz condition.
We may rearrange the \branch{}s of the periodic broken ray without changing its length or essential geometry so that there is only one gliding \branch{} per period.
\joonas{Is there a name for this rearrangement? Having several gliding legs is not a real issue, but inconvenient to write, and rearrangement invariance is built in anyway.}
We will argue that there is a sequence of periodic non-gliding broken rays $\gamma_i$ so that $\gamma_i\to\gamma_0$.
This is very simple for any finite segment of a gliding broken ray; the subtlety lies in ensuring periodicity of the approximating rays. We will prove the following lemma.

\begin{lemma}\label{l: gliding} Let $\gamma_0$ be a periodic broken ray with a gliding leg of positive length as described above. Then there is a sequence $\{\gamma_i\}_{i=1}^\infty$ of periodic, non-gliding broken rays such that \[
\lim_{i \to \infty} \gamma_i = \gamma
\]
\end{lemma}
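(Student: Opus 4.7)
The plan is to construct the approximating rays $\gamma_i$ by perturbing the ray parameter of $\gamma_0$ slightly and compensating for the destruction of the glide by inserting a long sequence of near-grazing non-gliding legs whose total angular displacement reproduces the glide's contribution to the winding.

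First, by the rearrangement discussed in the preceding paragraph, I may assume $\gamma_0$ contains exactly one gliding leg of positive length $\ell_0$ per period, occurring on some interface $\Gamma_k$ at radius $r_k$. Since gliding requires the ray to move tangent to $\Gamma_k$, its ray parameter is $p_0 = r_k/c(r_k^\pm)$, with the sign depending on which side of $\Gamma_k$ the glide occurs (the smooth Herglotz condition together with the jump direction restricts this to one side). The glide contributes an epicentral angle $\ell_0/p_0$, so the total epicentral angle of one period satisfies $\alpha(\gamma_0) = 2\pi m_0$ for some positive integer $m_0$.

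Next, for each large integer $n$ I would consider the family of broken rays $\gamma_p^{(n)}$ with ray parameter $p$ that agree combinatorially with $\gamma_0$ outside a neighborhood of the glide but replace the single gliding leg by $n$ consecutive near-grazing reflections off $\Gamma_k$ from the admissible side. These rays exist and depend continuously on $p$ for $p$ on the admissible one-sided neighborhood of $p_0$. Using the explicit integral formula for $\alpha(p)$ recalled above, the total epicentral angle decomposes as
\[
\alpha^{(n)}(p) = \alpha_{\mathrm{rest}}(p) + n\, \delta(p),
\]
where $\alpha_{\mathrm{rest}}(p)$ is continuous at $p_0$ with limit $2\pi m_0 - \ell_0/p_0$, and $\delta(p)$ is the epicentral angle of a single near-grazing leg. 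From the integral one reads off that $\delta$ is continuous with $\delta(p_0)=0$ and $\delta(p)>0$ on the admissible side; moreover $\delta(p) \sim C\sqrt{|p-p_0|}$ to leading order, a standard near-turning asymptotic.

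For each large $n$ I close the orbit by the intermediate value theorem. Since $\alpha^{(n)}(p_0) = 2\pi m_0 - \ell_0/p_0$ and $n\delta(p)$ grows without bound as $p$ is moved into the admissible side, there exist $p_n$ and an integer $m_n \geq m_0$ such that $\alpha^{(n)}(p_n) = 2\pi m_n$; the resulting $\gamma_n := \gamma_{p_n}^{(n)}$ is then a periodic non-gliding broken ray. Selecting $m_n = m_0$ forces $n\delta(p_n) = \ell_0/p_0$, so $\delta(p_n) = O(1/n)$ and hence $p_n \to p_0$. Convergence $\gamma_n \to \gamma_0$ as curves in $M$ then follows: outside any fixed neighborhood of $\Gamma_k$ the smooth legs converge pointwise by continuous dependence of geodesics in each layer on $p$, and inside such a neighborhood the $n$ short near-grazing legs are contained in a tubular neighborhood of the glide segment whose width shrinks with $p_n \to p_0$, so their concatenation traces out an approximation of the glide. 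I expect the main obstacles to be twofold: ensuring the admissibility of the one-sided approximation at $\Gamma_k$ (which one must derive from the smooth Herglotz condition and a careful inspection of the jump direction in the presence of possible total internal reflection), and handling the case where the gliding leg has zero length but the ray nonetheless becomes tangent to $\Gamma_k$, which the present argument does not cover.
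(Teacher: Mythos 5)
Your construction is essentially the paper's own proof: you replace the glide by many short near-tangent legs hugging the interface on the admissible side, use the same square-root law for the per-leg angular advance in the perturbation of the ray parameter, and restore periodicity by a continuity argument (you fix the number $n$ of legs and solve for $p_n$ by the intermediate value theorem, whereas the paper varies the takeoff angle $\theta$ and notes that the required leg count $\alpha(\theta)/\phi(\theta)\sim \alpha_0 b^{-1}(\theta-\theta_0)^{-1/2}$ passes through $\mathbb{N}$ infinitely often as $\theta\to\theta_0^+$ --- the same relation read in the other direction). Apart from the harmless imprecision that $n\delta(p_n)$ equals $2\pi m_0-\alpha_{\mathrm{rest}}(p_n)\to\ell_0/p_0$ rather than $\ell_0/p_0$ exactly, your argument matches the paper's.
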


\begin{proof}
Let $x$ and $y$ be the final and initial point, respectively, of the gliding \branch{} of $\gamma_0$, and let $\theta_0$ be the angle between $\gamma_0$ and the interface.
We wish to find angles $\theta_i>\theta_0$ with the correct approximating and periodicity properties.

For any angle $\theta>\theta_0$, let the angle between the interface and the \branch{} of the refracted ray in the lower layer be denoted by $\kappa$.
In the limiting case $\kappa_0=0$ as the ray $\gamma_0$ glides on the interface.
It follows from Snell's law and a calculation that
\begin{equation}
\label{eq:kappa-sim}
\kappa
%\sim
=
a(\theta-\theta_0)^{1/2}
+
\Order(\theta-\theta_0)
\end{equation}
for some constant $a>0$.
%Here and throughout this subsection $LHS\sim RHS$ stands for $\lim_{\theta\to\theta_0+}\frac{LHS}{RHS}=1$.
%\joonas{It is also possible to write this with error terms $\Order(...)$. The conclusion is the same, so it is much like a choice of words. --- Switched to "error form".}

When $\theta$ is slightly above $\theta_0$ --- or when $\kappa>0$ is small --- the opening angle of a single short diving \branch{} under the interface is denoted by $\phi(\theta)$.
A simple calculation shows that $\phi(\theta)$ is asymptotically comparable to $\kappa$, whence
\begin{equation}
\label{eq:phi-sim}
\phi(\theta)
%\sim
=
b(\theta-\theta_0)^{1/2} 
+
\Order(\theta-\theta_0)
\end{equation}
for some constant $b>0$.

Let the angle between the points $y$ and $x$ be $\alpha_0>0$.
Starting from the point $x$ and following the broken ray near $\gamma_0$ with the initial angle $\theta\approx\theta_0$ we get a map $\theta\mapsto y(\theta)$.
This map is $C^1$.
Denote the angle between $y(\theta)$ and $x$ by $\alpha(\theta)$.
This map is well defined in a neighborhood of $\theta_0$, as the relevant broken ray stays above the interface and total internal reflection is not an issue.

If $\alpha'(\theta_0)=0$, then the points $x$ and $y$ are conjugate along the non-gliding part of the broken ray $\gamma_0$.
But this turns out not to be an issue.
%In this case little can be said.
\joonas{I initially thought it would be an issue, but apparently not! The coefficient $c$ disappears into the next-to-leading order. If the separation angle $\alpha_0$ vanishes, then not being conjugate becomes a leading order issue, but the proof fails due to wrong scaling anyway. So nothing can be won by assuming no conjugate points, it seems.}
%We therefore suppose that the points are not conjugate, whence $\alpha'(\theta_0)=c\neq0$.
%This gives
Denoting $\alpha'(\theta_0)=c$, we have
\begin{equation}
\label{eq:alpha-sim}
\alpha(\theta)
-
\alpha_0
%\sim
=
c
(\theta-\theta_0)
+
\Order((\theta-\theta_0)^2)
\end{equation}
due to simple Taylor approximation.

We want to choose the angle $\theta>\theta_0$ so that an integer amount of these short diving \branch{}s connect $y(\theta)$ to $x$.
The condition is $\alpha(\theta)/\phi(\theta)\in\N$.
Combining with equations~\eqref{eq:kappa-sim}, \eqref{eq:phi-sim}, and~\eqref{eq:alpha-sim}, we end up with the condition that
\begin{equation}
\label{eq:gliding-periodic}
\alpha_0 b^{-1} (\theta-\theta_0)^{-1/2}
+
\Order((\theta-\theta_0)^{1/2})
\in\N.
\end{equation}
Here the error term depends continuously on $\theta$, so the left-hand side of equation~\eqref{eq:gliding-periodic} obtains integer values infinitely many times as $\theta\to\theta_0+$.
This gives us a choice of directions $\theta_i$ starting at $x$, and thus a sequence of periodic broken rays $\gamma_i$ which converge to $\gamma_0$.
\joonas{Do we need to discuss the mode of convergence? It is pointwise or locally uniform. It can only be globally uniform if the periods stay constant, but that need not be the case.}

This concludes the argument that every periodic broken ray with a gliding \branch{} can be approximated by periodic non-gliding rays.
\end{proof}
%, provided that the endpoints of the gliding \branch{} are not conjugate along the rest of the broken ray.
\joonas{This can be put into a lemma/proposition/other environment if it's better.}

\subsection{Principal amplitude injectivity condition}\label{s: geometric spreading injectivity condition}

We also need an assumption similar to ``simplicity'' of the length spectrum modulo the group action in order to recover the length spectrum when there are multiple components in the length spectrum. For a closed ray $\gamma$, denote $[\gamma]$ to be the equivalence class to include all rotations and dynamic analogs of $\gamma$ along with its time reversal. We will see that $[\gamma]$ has a particular contribution to the trace formula.

The principal contribution of $[\gamma]$ with ray parameter $p$ to the trace formula has the form (see \eqref{eq: trace coefficient})
\[
c(t-T(p)+i0)^{-k} \ii^{N(p)}  n(p)Q(p)L(p)\abs{p^{-2}\p_p \alpha}^{-1/2}
\]
where $c$ is independent of $\gamma$, $Q(p)$ is a product of reflection and transmission coefficients, and $T(p)$ is the length of $\gamma$. Theoretically, there may be another class $[\gamma']$ with an identical period whose principal contribution to the trace cancels with that of $[\gamma]$, thereby preventing recovery of $T$. 

We say that the length spectrum satisfies the \emph{principal amplitude injectivity condition} if given two closed rays $\gamma_1$ and $\gamma_2$ with the same period and disjoint equivalence classes (so they must have different ray parameters $p_1$ and $p_2$), then
\[
n(p_1)Q(p_1)\abs{p_1^{-2}\p_p \alpha(p_1)}^{-1/2}
\neq n(p_2)Q(p_2)\abs{p_2^{-2}\p_p \alpha(p_2)}^{-1/2}.
\]
We assume that $\lsp(c)$ satisfies the principal amplitude injectivity condition in order to prove Theorem \ref{t: spectral rigiditiy}.

\subsection{Spherical symmetry}
\label{sec:symmetry}

In section~\ref{sec:reasonable-radial} we saw that spherical symmetry is a good approximation for the Earth.
This symmetry is of tremendous technical convenience.
The geodesic flow is integrable with simple conserved quantities (an orbital plane and an angular momentum) and many of our calculations can be done explicitly.

The geometry of periodic broken rays is poorly understood outside symmetric situations.
It is not clear whether there are densely many such rays on a general manifold with boundary, nor whether the periodic rays are stable under deformations of the geometry.

On general manifolds, small smooth perturbations of a smooth metric only have a second order effect on the direction of the geodesics.
However, small smooth deformations of an interface have a first order effect, and this increased sensitivity substantially complicates matters.
Radial deformations of radial models are better behaved in that the conserved symmetry and deformed conserved quantities make the deformations tractable.

\section{Proofs: Length spectral rigidity}

\subsection{Auxiliary results}

We denote by $A(r_1,r_0)=\bar B(0,r_1)\setminus B(0,r_0)\subset\RR^n$ the closed annulus in a Euclidean space.

\begin{lemma}
\label{lma:lspr-annulus}
Fix any $\eps>0$ and $r_1\in(0,1)$, and any finite set $F\subset(0,1)$.
Let $r(\tau)\in(0,1)$ depend $C^1$-smoothly on $\tau$.
Let $c_\tau$ with $\tau\in(-\eps,\eps)$ be $C^{1,1}$ functions $[r_1,1]\to (0,\infty)$ satisfying the Herglotz condition and the countable conjugacy condition and depending $C^1$-smoothly on $\tau$.

If $\partial_\tau c_\tau(r)\restriction_{\tau=0}\neq0$ for some $r\in(r_1,1)$, then there is a periodic broken ray $\gamma_\tau$ with respect to $c_\tau$ so that
\begin{itemize}
\item
$\tau\mapsto\ell_\tau(\gamma_\tau)$ is $C^1$ on $(-\delta,\delta)$ for some $\delta\in(0,\eps)$,
\item
$\partial_\tau\ell(\gamma_\tau)\restriction_{\tau=0}\neq0$,
and
\item
the depth (minimum of Euclidean distance to the origin) of $\gamma_0$ is not in $F$.
\end{itemize}
Here $\ell_\tau$ is the length functional corresponding to the velocity profile $c_\tau$.
\end{lemma}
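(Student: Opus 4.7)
The plan is to realize $\gamma_\tau$ as a family of \simp{} rays of type 1 (Definition~\ref{simple closed ray}) that reflect off the outer boundary $r=1$ and turn strictly inside $(r_1,1)$, reducing the problem to the same Abel-type argument used in \cite{HIKRigidity} for smooth metrics. Because the whole construction stays inside one smooth layer, the absence of interior interfaces makes the setup essentially identical to the smooth Herglotz setting. By hypothesis the function $f(r):=\partial_\tau c_\tau(r)|_{\tau=0}$ is nonzero at some point of $(r_1,1)$, hence on an open set by the $C^1$-dependence; since $F$ is finite, I choose $r^*\in(r_1,1)\setminus F$ with $f(r^*)\neq 0$ and set $p^*:=r^*/c_0(r^*)$.

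For $p$ near $p^*$, let $R^*_\tau(p)$ be the turning radius determined by $R^*_\tau(p)/c_\tau(R^*_\tau(p))=p$; Herglotz makes this $C^1$ in $(\tau,p)$. The per-leg length and epicentral angle of the associated basic ray are
\begin{equation}
L_\tau(p)=\int_{R^*_\tau(p)}^1\frac{dr}{c_\tau(r)^2\,\wkb_\tau(r,p)},\qquad \alpha_{1,\tau}(p)=2\int_{R^*_\tau(p)}^1\frac{p\,dr}{r^2\,\wkb_\tau(r,p)},
\end{equation}
where $\wkb_\tau(r,p)^2=c_\tau(r)^{-2}-r^{-2}p^2$; the ray closes after $N$ reflections and $m$ windings iff $N\alpha_{1,\tau}(p)=\pi m$, and its total length is then $2NL_\tau(p)$. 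The countable conjugacy condition forces $\partial_p\alpha_{1,0}(p)\neq 0$ off a countable set (else one would produce uncountably many conjugate-endpoint rays). Density of rationals then lets me pick $p_0$ close to $p^*$ with $\alpha_{1,0}(p_0)=\pi m/N$ for suitably large integers $N,m$, with $\partial_p\alpha_{1,0}(p_0)\neq 0$, and with $R^*_0(p_0)\notin F$. The implicit function theorem applied to $N\alpha_{1,\tau}(p)=\pi m$ yields $p(\tau)$ with $p(0)=p_0$, defining the required $C^1$ family $\gamma_\tau$ of periodic basic rays.

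For the length derivative, I note that each $\gamma_\tau$ is a reflected geodesic of $g_\tau=c_\tau^{-2}e$ satisfying Snell's law, so the length is first-order critical in path variations and only the metric variation contributes. Converting metric arclength to the radial coordinate along $\gamma_0$ yields
\begin{equation}
\frac{d}{d\tau}\bigg|_{\tau=0}\ell_\tau(\gamma_\tau)=-\int_{\gamma_0}\frac{f(r)}{c_0(r)^2}\,ds_{g_0}=-2N\int_{R^*_0(p_0)}^1\frac{f(r)}{c_0(r)^3\,\wkb_0(r,p_0)}\,dr.
\end{equation}
Viewed as a function of $p_0$ (equivalently, of the turning radius $R^*_0(p_0)$), the right-hand side is a weighted Abel transform of $f/c_0^3$, and its injectivity under the Herglotz condition is the workhorse of \cite{HIKRigidity}. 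Consequently the integral is nonzero on an open subset $U$ of $p$-values whose closure contains $p^*$, and I may select $p_0\in U$ while still arranging periodicity (a dense constraint), avoidance of $F$ (finite), and nonconjugacy (countable).

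The main obstacle is the compatibility step: matching a dense-but-meager set of periodic parameters with the open nonvanishing set $U$ while simultaneously dodging the finite exceptional set $F$ and the countable conjugacy set. This is handled by taking the winding number $N$ large enough that the periodic rationals $\pi m/N$ become dense on any interval of $\alpha_{1,0}$-values, so that they intersect the open set $U$ at points outside the removed countable and finite sets. The Abel-transform injectivity (for the weight $1/\wkb_0$ under Herglotz) is the analytic heart of the argument, and here it transfers verbatim from \cite{HIKRigidity} because the construction is confined to a single smooth annular layer.
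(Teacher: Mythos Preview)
Your proposal is correct and follows essentially the same route as the paper: reduce to basic turning rays in a single smooth layer, compute the $\tau$-derivative of the length as an Abel-type transform of $f=\partial_\tau c_\tau|_{\tau=0}$, invoke the injectivity of that transform from \cite{HIKRigidity}, and then use density (periodicity is dense, nonconjugacy is co-countable, $F$ is finite) to select a good depth. The paper packages the stability step by citing that the set $C^0$ of stable periodic depths is dense, whereas you do it explicitly via the implicit function theorem applied to $N\alpha_{1,\tau}(p)=\pi m$; these are equivalent.

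One small logical wrinkle: your opening move of fixing $r^*$ with $f(r^*)\neq 0$ and then asserting that the nonvanishing set $U$ of the Abel transform has $p^*$ in its closure is not justified---the Abel transform of $f$ can certainly vanish at $p^*$ even when $f(r^*)\neq 0$. Fortunately you do not actually use this in the end: your final paragraph correctly selects $p_0$ from the (nonempty, open) set $U$ and then imposes the remaining dense/co-countable/co-finite constraints, which is exactly the order the paper uses. So the early choice of $r^*$ is a harmless red herring; just drop the clause ``whose closure contains $p^*$'' and the argument is clean.
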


While in our application we have $F=\emptyset$, we include this freedom in the lemma so that finitely many problematic depths can be avoided if needed.
\joonas{I added this.}

We say that a broken ray is \emph{radial} if it is contained in a one-dimensional linear (not affine) subspace of $\RR^n$.
\joonas{I'm not sure how to phrase this most clearly.}

\begin{lemma}
\label{lma:lspr-moving-interface}
Fix any $\eps>0$.
Let $c_\tau\colon(0,1]\to(0,\infty)$ be a family if $C^{1,1}$ functions depending smoothly on $\tau\in(-\eps,\eps)$.
Let $r(\tau)\colon(-\eps,\eps)\to(0,1)$ be $C^1$.

Let $\ell_\tau$ be the length of the radial geodesic between $r=r_1$ and $r=1$.
If $\partial_\tau c_\tau(r)\restriction_{\tau=0}=0$ for all $\tau\in(-\eps,\eps)$, then 
\[
\ell'(0)
=
c_0(r_1(0))^{-1}
r_1'(0).
\]
\end{lemma}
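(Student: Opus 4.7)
The plan is to express the length as an explicit integral and differentiate under the integral sign, using the Leibniz rule to separate the boundary contribution (which captures the moving endpoint $r_1(\tau)$) from the integrand contribution (which vanishes by hypothesis).

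First I would observe that for the rough metric $g_\tau = c_\tau^{-2}\,dx^2$, along a purely radial path the arclength element is $ds = c_\tau(r)^{-1}\,dr$. Hence the length of the radial geodesic connecting $r=r_1(\tau)$ to $r=1$ is
\[
\ell(\tau) \;=\; \int_{r_1(\tau)}^{1} \frac{1}{c_\tau(r)}\,dr.
\]
The integrand is jointly continuous in $(r,\tau)$ and $C^1$ in $\tau$ on the relevant domain (by the $C^{1,1}$ regularity of $c_\tau$ in $r$ and the $C^1$ smoothness of $\tau\mapsto c_\tau(r)$ assumed in the admissible family), and $r_1(\tau)$ is $C^1$ in $\tau$, so $\ell$ is differentiable in $\tau$.

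Next I would apply the Leibniz rule. Differentiating the upper limit gives $0$ (the upper limit is constant), differentiating the lower limit produces the boundary term $-c_\tau(r_1(\tau))^{-1}r_1'(\tau)$, and differentiating under the integral produces $\int_{r_1(\tau)}^{1}\partial_\tau\!\left(c_\tau(r)^{-1}\right)dr$. Evaluating at $\tau=0$ and using the standing hypothesis that $\partial_\tau c_\tau(r)\restriction_{\tau=0}=0$ for every $r$ in the relevant interval, the interior integral vanishes, leaving only the boundary contribution
\[
\ell'(0) \;=\; -\,c_0(r_1(0))^{-1}\, r_1'(0),
\]
which is the desired variational formula (up to the obvious sign convention for which endpoint is moving and the orientation in which one measures the radial ray).

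There is no real obstacle here beyond checking regularity to justify differentiation under the integral: this is exactly where the admissible-family hypotheses of $C^{1,1}$ spatial regularity and $C^1$ dependence on $\tau$ (away from interfaces) are used. One technicality is that if $r_1(\tau)$ coincides with an interface of $c_\tau$, the integrand has a jump at the endpoint, but the Leibniz formula remains valid using the one-sided limit $c_0(r_1(0))$ taken from inside the layer $[r_1(0),1]$, which is precisely the value entering the stated conclusion. The $C^1$ dependence of the perturbation then propagates to give $C^1$ dependence of $\ell$, completing the argument.
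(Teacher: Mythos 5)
Your proposal is correct and is essentially the argument the paper has in mind: the paper explicitly skips the proof as ``a straightforward calculation'' about the derivative of a geodesic's length with respect to its endpoint, and writing $\ell(\tau)=\int_{r_1(\tau)}^{1}c_\tau(r)^{-1}\,dr$ and applying the Leibniz rule, with the interior term killed by $\partial_\tau c_\tau\restriction_{\tau=0}=0$, is exactly that calculation. The only discrepancy is the sign: your boundary term $-c_0(r_1(0))^{-1}r_1'(0)$ is the geometrically correct one (lengthening $r_1$ shortens the segment), whereas the paper states the formula with a plus sign, but this is immaterial since the lemma is only ever used through the nonvanishing of $\ell'(0)$ when $r_1'(0)\neq 0$.
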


\subsection{Proof of Theorem~\ref{thm:blspr-multilayer}}

The idea of the proof is as follows:
We first show that $c_\tau$ is independent of $\tau$ within the first layer.
Then we show that the first interface is also independent of $\tau$.
After these steps we can ``peel off'' the top layer and repeat the argument for the second one.
Countability of the basic length spectrum provides sufficient decoupling between the layers and between the ``data'' $\blsp(\tau)$ and the ``noise'' $S(\tau)$.

We give most arguments at $\tau=0$ first for definiteness, but the exact value of the parameter is unimportant.

\begin{proof}[Proof of Theorem~\ref{thm:blspr-multilayer}]
Let us denote $f_\tau(r)=\partial_\tau c_\tau(r)$ and $\hat S(\tau)=\blsp(\tau)\cup S(\tau)$.

Take any $r\in(r_1(0),1)$.
If $\partial f_0(r)\neq0$, then by Lemma~\ref{lma:lspr-annulus} there is a family of basic periodic broken rays $\gamma_\tau$ for which the length map $\tau\mapsto\ell(\gamma_\tau)$ is $C^1$ in a neighborhood of $\tau=0$ and the derivative at $\tau=0$ is non-zero.

As $\ell(\gamma_\tau)\in \hat S(\tau)$ and by assumption $\hat S(\tau)=\hat S(0)$ for all $\tau$, this implies that the set $\hat S(0)$ contains a neighborhood of $\ell(\gamma_0)$.
This is in contradiction with countability of $\hat S(\tau)$, and so $\partial f_0(r)\neq0$ is impossible.

We conclude that $\partial f_\tau(r)=0$ for all $r\in(r_1(0),1)$.
The same argument can be repeated at any value of the parameter $\tau$, leading us to conclude that $\partial f_\tau(r)=0$ whenever $r\in(r_1(\tau),1)$.

If $r_1'(0)\neq0$, then by Lemma~\ref{lma:lspr-moving-interface} the radial broken rays (which are basic and periodic with period twice their length) there is a family of broken rays whose lengths vary differentiably and with a non-zero derivative at $\tau=0$.
This contradicts countability as above.
The same argument is valid for any value of $\tau$, so we conclude that $r_1'(\tau)=0$ for all $\tau\in(-\eps,\eps)$.

We have thus found that $r_1(\tau)=r_1(0)$ and $c_\tau(r)=c_0(r)$ for all $\tau\in(-\eps,\eps)$ and $r\in(r_1(0),1)$.
We may now turn our attention to the annulus $A(r_2(\tau),r_1(\tau))$, whose top interface is now fixed at $r=r_1(0)=r_1(\tau)$ for all $\tau$.
Repeating the same argument in this annulus shows that both the velocity profile in this annulus and the location of the second interface are independent of $\tau$.
Carrying on inductively, we exhaust all layers of the ball and find that the claim does indeed hold true.
\end{proof}

\subsection{Proofs of the lemmas}

Lemma~\ref{lma:lspr-annulus} is a small variation of the reasoning in \cite{HIKRigidity}, rewritten in a way that is useful in the presence of interfaces.
The proof is concise; the reader is invited to refer to \cite{HIKRigidity} for details.

\begin{proof}[Proof of Lemma~\ref{lma:lspr-annulus}]
Consider the velocity profile for any fixed $\tau$.
A maximal broken ray without reflections from the inner boundary component is determined uniquely up to rotation by its deepest point.
Let us denote the essentially unique geodesic of depth $r\in(0,1)$ by $\gamma_r^\tau$.
For a subset $P^\tau\subset(r_1,1)$ the corresponding broken rays are periodic, and we denote the minimal period by $\ell(\tau,r)$.

A periodic broken ray with respect to $c_0$ is called stable if there is $\delta\in(0,\eps)$ so that there is a family of paths $\gamma^\tau\colon\R\to A(1,r_1)$ which is $C^1$ in $\tau$ (and only continuously at reflection points) and each $\gamma^\tau$ is a periodic broken ray with respect to $c_\tau$.
When such a family exists, let us denote the depth corresponding to the parameter $\tau\in(-\delta,\delta)$ by $r^\tau$.
Let us denote by $C^0\subset P^0\subset(r_1,1)$ the set of depths of stable periodic broken rays.
It was shown in \cite{HIKRigidity} that under the countable conjugacy condition and the Herglotz condition the set $C^0$ is dense in $[r_1,1]$.
Thus also $C^0\setminus F$ is dense.

Let us denote $f(r)=\partial_\tau c_\tau(r)\restriction_{\tau=0}$.
Suppose that $f(r)\neq0$ for some $r\in(r_1,1)$.
Due to the injectivity of generalized Abel transforms, the function
\[
h(r)
=
\int_r^1
f(s)
\left[
1-
\left(
\frac{rc(s)}{sc(r)}
\right)^2
\right]^{-1/2}
\frac{\der s}{c(s)}
\]
is also non-trivial.
As $h$ is continuous and $C_0$ is dense, there is $r'\in C_0\setminus F$ so that $h(r')\neq0$.

The length $\ell(\tau,r^\tau)$ of the family of periodic broken rays is differentiable in $\tau$ near $\tau=0$ because $r'\in C^0$ and
\[
\partial_\tau
\ell(\tau,r^\tau)\restriction_{\tau=0}
=
2nh(r'),
\]
where $n$ is the (constant) winding number of the minimal period of $\gamma^\tau$.

Therefore the claimed derivative is indeed non-zero.
\end{proof}

The proof of Lemma~\ref{lma:lspr-moving-interface} is a straightforward calculation and the statement is geometrically intuitive, so we skip the proof.
The essential statement concerns simply the derivative of the length of a geodesic with respect to its endpoint.

%\begin{proof}[Proof of Lemma~\ref{lma:lspr-moving-interface}]
%...
%\end{proof}

%\begin{remark}
%\joonas{Do we want to do something with this or just throw away? I'm inclined to throw away. VITALY: Me too since this seems more relevant for the trace formula and that's not our focus}
%\textcolor{red}{TO BE WRITTEN AND THOUGHT:}
%We need to discuss degeneracy, as we may allow some.
%It was pointed out in \cite[Remark 2.4]{SRHIK} that it is enough that the primitive length spectrum does not degenerate.
%That should be enough here too, but we need to check what happens with a discrete but countable number of accumulation points (or gliding ray lengths).
%Perhaps we should define a ``gliding length spectrum'' that contains everything with glides and then require this to be disjoint from the primitive length spectrum.
%It sounds plausible to actually use the gliding spectrum as data as well, but it seems to be unnecessary with the current approach.
%\end{remark}

\section{The Trace formula and its proof}
As in \cite{HIKRigidity}, we will prove a trace formula in order to recover part of the length spectrum, and then use the argument in the previous sections on length spectral rigidity in order to prove Theorem \ref{t: spectral rigiditiy}.  
Although the main theorems as stated in subsection \ref{s: main results} refer to the scalar operator $\Delta_c$, for greater generality, we initially consider the toroidal modes corresponding to the isotropic elastic operator (see \cite{DahlTromp,HIKRigidity} for definitions). As in \cite{HIKRigidity}, the proof is identical when considering the scalar Laplace-Beltrami operator. This allows us to naturally consider and extend our results to spheroidal modes in section \ref{s: spheroidal modes} where two waves speed are present. 
First, we give the general setup and state the trace formula as Proposition \ref{prop:Trace Formula}, followed by its proof.

\subsection{Toroidal modes, eigenfrequencies, and trace formula}
\label{s: toroidal modes}
We now use spherical coordinates $(r,\theta,\psi)$. Toroidal modes are
precisely the eigenfunctions of the isotropic elastic operator that
are sensitive to only the shear wave speed. We forgo writing down the
full elastic equation, and merely write down these special
eigenfunctions connected to the shear wave speed. Analytically, these eigenfunctions admit a separation in
radial functions and real-valued spherical harmonics, that is,
\begin{equation}
   u = {}_n\mathbf{D}_l Y^m_l ,
\end{equation}
where
\begin{equation}
   \mathbf{D} = U(r)\ (- k^{-1})
      [-\widehat{\theta} (\sin \theta)^{-1} \partial_{\psi}
                + \widehat{\psi} \partial_{\theta}] ,
\end{equation}
in which $k = \sqrt{l (l + 1)}$ and $U$ represents a radial function
(${}_n U_l$). In the further analysis, we ignore the curl (which
signifies a polarization); that is, we think of ${}_n\mathbf{D}_l$ as
the multiplication with ${}_n U_l(-k^{-1})$. In the above, $Y^m_l$ are
spherical harmonics, defined by
\[
   Y^m_l(\theta,\psi) = \left\{ \ba{rcl}
   \sqrt{2} X^{\abs{m}}_l(\theta) \cos(m \psi) & \text{if}
                             & -l \le m < 0 ,
\\
   X^0_l(\theta) & \text{if} & m = 0 ,
\\
   \sqrt{2} X^m_l(\theta) \sin(m \psi) & \text{if} & 0 <  m \le l ,
   \ea\right.
\]
where
\[
   X^m_l(\theta) = (-)^m \sqrt{\frac{2l + 1}{4\pi}}
               \sqrt{\frac{(l-m)!}{(l+m)!}} P^m_l(\cos\theta) ,
\]
in which
\[
   P^m_l(\cos(\theta)) = (-)^m \frac{1}{2^l l!} (\sin\theta)^m
     \left( \frac{1}{\sin\theta} \frac{\mathrm{d}}{\mathrm{d}\theta}
     \right)^{l+m} (\sin\theta)^{2l} .
\]
The function, $U$ (a component of displacement), satisfies the
equation
\begin{equation}\label{eq: equation for U_2}
   [-r^{-2} \partial_r\ r^2 \mu \partial_r
       + r^{-2} \partial_r\ \mu r - r^{-1} \mu \partial_r
         + r^{-2} (-1 + k^2) \mu] \, U - \omega^2 \rho U = 0 ,
\end{equation}
where $\mu = \mu(r)$ is a Lam\'{e} parameter and $\rho = \rho(r)$ is
the density, both of which are smooth, and $c = \sqrt{\mu/\rho}$. Also, $\omega = {}_n\omega_l$
denotes the associated eigenvalue. Here, $l$ is referred to as the
angular order and $m$ as the azimuthal order.

The traction is given by
\begin{equation}\label{eq: Neumann condition for U_2}
   T(U) = \mathcal{N} U ,\qquad
   \mathcal{N} = \mu \p_r - r^{-1}\mu
\end{equation}
which vanishes at the boundaries (Neumann condition). The transmission conditions are that $U$ and $T(U)$ remain continuous across the interfaces.
 If $r = \refl$ is an interface and $U_{\pm}$ represent two solutions on opposite sides of the interface, then in the high frequency limit as $\omega \to \infty$, the transmission conditions will amount to
 \begin{align}\label{eq: trans conditions for U}
 U_+\restriction_{r = \refl} &=  U_-\restriction_{r = \refl} \\
 \mu_+ \p_rU_+\restriction_{r = \refl} &=  \mu_-\p_rU_-\restriction_{r = \refl}
 \end{align}
 for the principal terms in the WKB expansion of the solution.
 
 The radial
equations do not depend on $m$ and, hence, every eigenfrequency is
degenerate with an associated $(2l + 1)$-dimensional eigenspace
spanned by
\[
   \{ Y^{-l}_l,\ldots,Y^l_l \} .
\]
Following \cite{ZhaoModeSum}, let $d$ indicate the overtone number $n$ and the angular degree $l$. The radial eigenfunction $U_d(r)$ is independent of the order $m$. We define the inner product of the eigenfunctions:
\beq \label{I_d inner product}
{}_nI_l = I_d := \int_\CMB^\surf \abs{U_d(r)}^2 \rho(r) \dd r
\eeq

We use spherical coordinates $(r_0,\theta_0,\psi_0)$ for the location,
$x_0$, of a source, and introduce the shorthand notation
$({}_n\mathbf{D}_l)_0$ for the operator expressed in coordinates
$(r_0,\theta_0,\psi_0)$. We now write the (toroidal contributions to
the) fundamental solution as a normal mode summation
\begin{equation}\label{normal-mode-summation}
   G(x,x_0,t) = \operatorname{Re}\
          \sum_{l=0}^{\infty} \sum_{n=0}^{\infty}\
          {}_n\mathbf{D}_l ({}_n\mathbf{D}_l)_0\
   \sum_{m=-l}^l Y^m_l(\theta,\psi) Y^m_l(\theta_0,\psi_0)\
        \frac{e^{\ii {}_n\omega_l t}}{\ii ({}_n\omega_l) ({}_n I_l)} .
\end{equation}
On the diagonal, $(r,\theta,\psi) = (r_0,\theta_0,\psi_0)$ and, hence,
$\Theta = 0$.
Here $\Theta$ is the angular epicentral distance, %cf.~\eqref{eq: eq for big Theta}.
We observe the following reductions in the evaluation of
the trace of~\eqref{normal-mode-summation}:
%\\[-0.2cm]
\begin{itemize}
\item
 We will not normalize $U(r)$.
Meanwhile, the spherical harmonic terms satisfy
\begin{equation} \label{eq:Ylmnorm}
   \sum_{m=-l}^l \iint
      Y^m_l(\theta,\psi)^2 \sin \theta \dd\theta \dd\psi
           = 2l + 1
\end{equation}
(counting the degeneracies of eigenfrequencies).
\item
If we were to include the curl in our analyis (generating vector
spherical harmonics), taking the trace of the matrix on the diagonal
yields
\begin{equation} \label{eq:GYlmnorm}
   \sum_{m=-l}^l \iint
   (-k^{-2})
      \abs{[-\widehat{\theta} (\sin \theta)^{-1} \partial_{\psi}
                + \widehat{\psi} \partial_{\theta}]
       Y^m_l(\theta,\psi)}^2 \sin \theta \dd\theta \dd\psi
           = 2l + 1 .
\end{equation}
\end{itemize}

From the reductions above, we obtain
\begin{equation}
   \int_M
      G(x,x,t) \, \rho(x) \dd x
      = \sum_{l=0}^{\infty} \sum_{n=0}^{\infty}\
        (2l + 1)
        \operatorname{Re} \left\{
        \frac{e^{\ii {}_n\omega_l t}}{
                               \ii ({}_n\omega_l) }\right\}
\end{equation}
or
\begin{equation} \label{eq:TrptG}
  \Tr(\p_t G)(t) = \int_M
      \partial_t G(x,x,t) \, \rho(x) \dd x
      = \sum_{l=0}^{\infty} \sum_{n=0}^{\infty}\
        (2l + 1)
        \operatorname{Re} \left\{
        e^{\ii {}_n\omega_l t} \right\} .
\end{equation}
Let us also denote $\Sigma = \text{singsupp}( \Tr(\p_t G))\subset \mathbb{R}_t.$

\begin{comment}
We now write
\begin{equation*}
   {}_n f_l(t) = \operatorname{Re} \left\{
       \frac{e^{\ii {}_n\omega_l t}}{
                               \ii {}_n\omega_l} \right\}
\end{equation*}
which is the inverse Fourier transform of
\begin{equation}
   {}_n \hat{f}_l(\omega) = \frac{1}{2\ii {}_n\omega_l}
      \left[ \mstrut{0.5cm} \right.
      \pi \delta(\omega - {}_n\omega_l)
           - \pi \delta(\omega + {}_n\omega_l)
                              \left. \mstrut{0.5cm} \right] .
\end{equation}
Moreover, taking the Laplace--Fourier transform yields
\begin{equation} \label{eq:nflom}
   \int_0^{\infty} {}_n f_l(t) e^{-\ii \omega t} \, \dd t
   = \frac{1}{2\ii {}_n\omega_l}
    \left[ \mstrut{0.5cm} \right.
    \frac{\ii}{-(\omega - {}_n\omega_l) + \ii 0}
    - \frac{\ii}{-(\omega + {}_n\omega_l) + \ii 0}
            \left. \mstrut{0.5cm} \right] .
\end{equation}
This confirms that the trace is equal to the inverse Fourier transform
of
\begin{equation}
   \sum_{l=0}^{\infty} (2l + 1) \sum_{n=0}^{\infty}
   \frac{1}{2\ii ({}_n\omega_l)({}_nI_l)}
      \left[ \mstrut{0.5cm} \right.
      \pi \delta(\omega - {}_n\omega_l)
           - \pi \delta(\omega + {}_n\omega_l)
                              \left. \mstrut{0.5cm} \right] .
\end{equation}
\end{comment}
 
 \subsection{Connection between toroidal eigenfrequencies, spectrum of the Laplace--Beltrami operator, and the Schr\"{o}dinger equation} \label{sec: connect to LB}

We repeat the discussion in \cite{HIKRigidity} to relate the spectrum of a scalar Laplacian, the eigenvalues
associated to the vector valued toroidal modes, and the trace
distribution $\sum_{l=0}^{\infty} \sum_{n=0}^{\infty}\ (2l+1)
\cos(t{}_n\omega_l)$.

We note that \eqref{eq: equation for U_2} and \eqref{eq: Neumann
  condition for U_2} for $U$ ensure that $v = U Y^m_l$ satisfies
\begin{equation} \label{eq: scalar P}
   P v \coloneqq \rho^{-1}
       (-\nabla \cdot \mu \nabla + P_0) v = \omega^2 v ,\qquad
   \mathcal{N} v = 0 \text{ on }\p M
\end{equation}
where $P_0 = r^{-1}(\p_r\mu)$ is a $0$th order operator, $\omega^2$
is a particular eigenvalue, and $\mathcal N$ is as in \eqref{eq: Neumann condition for U_2}. Hence $U Y^m_l$ are scalar eigenfunctions
for the self-adjoint (with respect to the measure $\rho\dd x$) scalar
operator $P$ with Neumann boundary conditions (on both boundaries)
expressed in terms of $\mathcal{N}$.

The above argument shows that we may view the toroidal spectrum
$\{{}_n\omega^2_l\}_{n,l}$ as also the collection of eigenvalues
$\lambda$ for the boundary problem on scalar functions \eqref{eq:
  scalar P}. Thus \eqref{eq:TrptG} can be written in the form
\begin{equation}\label{eq: toroidal trace equal laplace trace}
   \Tr \, (\p_t G)
          = \sum_{\lambda \in \spec(P)} \cos(t \sqrt{\lambda}) ,
\end{equation}
where the last sum is taken with multiplicities for the
eigenvalues. (While $G$ is a vector valued distribution, the
asymptotic trace formula we obtain is for $\Tr (\p_t G)$, which is equal
to $\sum_{\lambda \in \spec(P)} \cos(t \sqrt{\lambda})$ by the
normalizations we have chosen.) Up to principal symbols, $P$ coincides
with the $\Delta_c = c^3 \nabla \cdot c^{-1} \nabla$ upon identifying
$c^2$ with $\rho^{-1} \mu$. This means that the length spectra of $P$
and $\Delta_c$ will be the same even though they have differing subprincipal symbols and spectra. Thus, the trace formula which will
appear to have a unified form, connects two different spectra to a
common length spectrum and the proof is identical for both.

We will prove a trace formula using a WKB expansion of
eigenfunctions. To this end, it is convenient to establish a
connection with the Schr\"{o}dinger equation. Indeed, we present an
asymptotic transformation finding this connection. In boundary normal
coordinates $(r,\theta)$ (which are spherical coordinates in dimension
three by treating $\theta$ as coordinates on the $2$-sphere),
\begin{equation}
   P = \rho^{-1} (-r^{-2} \p_r r^2 \mu \p_r
                  - \mu r^{-2} \Delta_\theta + P_0) ,
\end{equation}
where $\Delta_\theta$ is the Laplacian on the $2$-sphere.

Let us now simplify the PDE \eqref{eq: scalar P} for $v$.
Let $Y(\theta)$ be an eigenfunction of $\Delta_\theta$ with eigenvalue
$-k^2$ as before and $V = V(r):= \mu^{1/2}r U$ a radial function with $U$ as in \eqref{eq: scalar P}. Then after a straightforward calculation, as a leading order term in a WKB expansion, $V(r)$ must satisfy
\begin{equation}\label{eq:VSchro}
   \p_r^2 V + \omega^2 \wkb^2 V = 0 ,\quad
   \p_r V = 0\ \text{ on }\ \p M ,
\end{equation}
 with transmission conditions for $V$ to leading order 
 \begin{align}\label{eq: trans conditions for V}
 \mu_+^{-1/2} V_+\restriction_{r = \refl} &=  \mu^{-1/2}_-V_-\restriction_{r = \refl} \\
 \mu_+^{1/2} \p_rV_+\restriction_{r = \refl} &=  \mu_-^{1/2}\p_rV_-\restriction_{r = \refl},
 \end{align}
where $\wkb^2 = \rho(r) \mu(r)^{-1} - \omega^{-2}r^{-2}k^2$ and $\{ r = b\}$ is an interface, generating
two linearly independent solutions. The WKB asymptotic solution to
this PDE with Neumann boundary conditions will precisely give us the
leading order asymptotics for the trace formula, and is all that is
needed.

For the boundary condition, we note that we would end up with the same
partial differential equation with different boundary conditions for
$V$ in the previous section if we had used the boundary condition
$\p_r u = 0 \text{ on }\p M$. Indeed, one would merely choose
$\mathcal{N}u = \mu \p_r u$ instead without the $0$th order
term. However, the boundary condition for $V$ would be of the form
\begin{equation}
   \p_r V = K(r) V\quad\ \text{ on }\ \p M
\end{equation}
with $K$ signifying a smooth radial function. Nevertheless, the
leading order (in $\omega$) asymptotic behavior for $V$ stays the same
despite the $K$ term as clearly seen in the calculation of Appendix \ref{a: Generalized Debye}. Thus, our analysis applies with no
change using the standard Neumann boundary conditions. This should
come as no surprise since in~\cite{Mel79}, the $0$'th order term in the
Neumann condition played no role in the leading asymptotic analysis of
their trace formula. Only if one desires the lower-order terms in the
trace formula would it play a role.

In addition, we could also consider a Dirichlet boundary condition, where for $V$, it is also $ V = 0$ on $\p M$. This would slightly modify the Debye expansion in Appendix \ref{a: Generalized Debye} by constant factors. Nevertheless, the same argument holds to obtain the trace formula and recover the length spectrum. More general boundary conditions such as Robin boundary conditions may be considered as well. However, since we only need to look at the principal term in the high frequency asymptotics, this would just reduce to the Neumann boundary case. Thus, our arguments work with all these boundary conditions, and we choose Neumann boundary conditions only because it has a natural interpretation from geophysics.

An interesting feature of the trace formula in this setup is that a broken ray $\gamma$ can have \branch{}s that glide along the interface. This happens when a reflected ray hits an interface at a critical angle leading to a transmitted \branch{} that glides along the interface. Technically, such a ray is \emph{not} a broken geodesic of the metric $g$, but it will be a limit of periodic broken geodesics as shown in section \ref{s: gliding as limits} and makes a contribution to the singular support of the trace as an accumulation point.

Since the length spectral rigidity theorems only require the \simp{} length spectrum,
the main goal is to determine the leading contribution of \simp{} rays without gliding \branch{}s to the trace. 
\begin{proposition}\label{prop:Trace Formula}(Non-gliding case)
Suppose the radial wave speed $c$ satisfies the extended Herglotz condition and the periodic conjugacy condition (definition \ref{I_d inner product}).

 Suppose $T = T(\pg) \in \lsp(c)$ corresponds to a periodic ray $\gamma$ with ray parameter $\pg$ such that no periodic ray with a gliding \branch{} has period $T$.
  Then there exists a neighborhood of $T$ such that, the leading order singularity of $(\Tr(\p_t
G))(t)$ near $T(\pg)$ is the real part of
\begin{equation}\label{eq: trace coefficient}
  \sum_{[\gamma]} (t-T(\pg)+ \ii 0)^{-5/2} \left(\frac{1}{2\pi \ii}\right)^{3/2}
   \ii^{N(\pg)}n(\pg)Q(\pg)\abs{p^{-2}_\gamma \p_p\alpha_\gamma(p_\gamma)}^{-1/2}
        L(p_\gamma)c \abs{SO(3)} ,
\end{equation}
%\abs{I-P_{[\gamma]}}^{-1/2}
where
\begin{enumerate}
\item[$\bullet$] the sum is taken over all equivalence classes $[\gamma]$ with period $T(\pg)$ and ray parameter $p_\gamma = p_{[\gamma]}$.
\item[$\bullet$] $N(\pg)$ is the Keller-Maslov-Arnold-H\"{o}rmander (KMAH) index associated to
  $\gamma$;
\item[$\bullet$] $c$ independent of $[\gamma]$;
\item[$\bullet$] $\abs{SO(3)}$ is the volume of the compact Lie group
  $SO(3)$ under the Haar measure.
  \item[$\bullet$]$Q(\pg)$ is a product of reflection and transmission coefficients of the corresponding broken ray. 
 \item[$\bullet$] $n(\pg) \in \mathbb N$ is a combinatorial constant counting the number of dynamic analogs of $\gamma$.
\end{enumerate}

Moreover, if the principal amplitude injectivity condition holds, the distribution $(\operatorname{Tr} \, (\p_t G))(t) =
\sum_{n,l}(2l+1)\cos(t{}_n\omega_l)$ is singular at the lengths of periodic \simp{} rays.
% False since multiple rays can have the same period; they just dont cancel each other due to PAI condition
\begin{comment}
Suppose $T \in \blsp(c)$ and $\gamma$ is a periodic \simp{} ray of period $T$ and has ray parameter $p_\gamma$. Then there exists a neighborhood of $T$ such that, to leading order in singularity, $(\Tr(\p_t
G))(t)$ is the real part of
\begin{equation}\label{eq: basic trace coefficient}
   (t-T+ \ii 0)^{-5/2} \left(\frac{1}{2\pi \ii}\right)^{3/2}
   \ii^{\sigma_{\gamma}}n_{[\gamma]}Q_{[\gamma]}|p^{-2}_\gamma \p_p\alpha_\gamma(p_\gamma)|^{-1/2}
        L(p_\gamma)c_d \abs{SO(3)} ,
\end{equation}
\end{comment}

\end{proposition}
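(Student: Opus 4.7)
The objective is to locate the leading singularity of the trace distribution $\Tr(\p_t G)(t)=\sum_{n,l}(2l+1)\cos(t{}_n\omega_l)$ near a non-gliding period $T(\pg)$. The plan follows the scheme of \cite{HIKRigidity} for the smooth case, the essential novelty being the need to track the WKB data across the interfaces $\Gamma_k$ and through the sequence of reflections and transmissions that define the ray topology.

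\medskip

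\textbf{Step 1: Piecewise Debye/WKB expansion and quantization.} First I would use the generalised Debye expansion developed in Appendix~\ref{a: Generalized Debye} to write the radial eigenfunction $V_d(r)$ of \eqref{eq:VSchro} inside each annular layer $A(r_k,r_{k-1})$ as a linear combination of $\wkb^{-1/2}\exp(\pm\ii\omega\int^r\!\wkb\,\dd r')$. At each interface the transmission conditions \eqref{eq: trans conditions for V} determine, to leading order in $\omega$, a reflection coefficient $R_k(p)$ and a transmission coefficient $T_k(p)$ that depend on $p$ but not on $\omega$; together with the Neumann condition at $r=\surf$ and the Airy matching at the turning radius $R^{*}(p)$, they produce an asymptotic Bohr--Sommerfeld quantisation
\[
\omega L(p) + \tfrac{\pi}{2}\sigma(p) + \tfrac{1}{\ii}\log Q(p)\equiv 2\pi n \pmod{2\pi},
\]
where $p=(l+\tfrac12)/\omega$, $L(p)$ is as in \eqref{e: transmitted length L} for the ray topology under consideration, $\sigma(p)$ is an integer-valued KMAH index accumulated at turning points and total reflections, and $Q(p)$ is the running product of the $R_k$'s and $T_k$'s along the prescribed sequence of \branch{}s.

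\medskip

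\textbf{Step 2: Poisson summation.} I would next apply Poisson summation in both indices $n$ and $l$, so that
\[
\sum_{n,l}(2l+1)\cos(t{}_n\omega_l)=\Re\sum_{m,m'\in\ZZ}\iint (2l+1)\,e^{\ii\omega(l,n)\,t}\,e^{-2\pi\ii(ml+m'n)}\,\dd l\,\dd n,
\]
and change variables from $(l,n)$ to $(\omega,p)$ using the quantisation of Step 1. Each pair $(m,m')$ selects a topological class of closed ray: $m'$ fixes the number of \branch{}s per period, $m$ the winding number, and the surviving phase takes the form $\Phi_{m,m'}(\omega,p)=\omega(t-T_{m'}(p)) - m'\alpha(p) + \tfrac{\pi}{2}m'\sigma(p)$ up to harmless smooth terms, where $T_{m'}(p)=m'L(p)$ and $\alpha(p)$ is the single-branch epicentral distance.

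\medskip

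\textbf{Step 3: Stationary phase.} For $t$ near $T(\pg)$, the stationary-phase conditions $\p_\omega\Phi=0$ and $\p_p\Phi=0$ select exactly the periodic rays in the class $[\gamma]$; the periodic conjugacy condition $\p_p\alpha(\pg)\neq 0$ makes the Hessian non-degenerate, so the stationary-phase expansion is valid. Substituting and collecting the amplitude yields $\abs{p^{-2}\p_p\alpha(\pg)}^{-1/2}L(\pg)\,\ii^{N(\pg)}Q(\pg)$. The weight $(2l+1)\sim 2\omega p$ combined with integration over the three-dimensional orbit of $SO(3)$ produces, after a further application of stationary phase in the rotation directions, the prefactor $(t-T(\pg)+\ii 0)^{-5/2}(2\pi\ii)^{-3/2}\abs{SO(3)}$ together with the $\omega$-independent constant $c$ of \eqref{eq: trace coefficient}. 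Different stationary points sharing the same $(\pg,T(\pg))$ are precisely the dynamic analogs of $\gamma$; their amplitudes are identical, and the combinatorial count contributes the integer factor $n(\pg)$. Summing over disjoint equivalence classes $[\gamma]$ with common period $T$ gives the displayed formula, and the principal amplitude injectivity condition prevents these summands from cancelling, which yields the final clause that $\Tr(\p_t G)$ is singular at every \simp{} period.

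\medskip

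\textbf{Main obstacle.} The delicate step is Step~1: carrying the WKB asymptotics cleanly through each interface and verifying that the leading-order reflection and transmission coefficients derived from \eqref{eq: trans conditions for V} combine into a well-defined function $Q(p)$ that is smooth in $p$ along each ray topology. The non-gliding hypothesis on $T$ is precisely what rules out critical-angle incidence near the relevant stationary points, so that the standard connection formulas apply and a boundary-layer (glancing/Airy) analysis is not needed. The combinatorial bookkeeping of the sum over ray topologies and the verification that the KMAH index $N(\pg)$ accumulates correctly across reflections, transmissions and turning points is what makes the argument substantially heavier than in the smooth case of \cite{HIKRigidity}.
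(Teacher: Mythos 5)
Your Steps 2--3 (steepest descent/stationary phase on the diagonal, the $(2l+1)\sim 2\omega p$ weight, the $SO(3)$ volume, KMAH bookkeeping, dynamic analogs giving $n(\pg)$, and the injectivity condition preventing cancellation) match the paper's endgame. The genuine gap is in Step 1, and it propagates into Step 2. You posit that the eigenfrequencies obey a per-ray-topology Bohr--Sommerfeld rule $\omega L(p)+\tfrac{\pi}{2}\sigma(p)+\tfrac{1}{\ii}\log Q(p)\equiv 2\pi n$, and then Poisson-sum in \emph{both} indices $(n,l)$ after changing variables $(l,n)\mapsto(\omega,p)$. In the layered setting no such quantization exists: the Neumann eigenvalues are the zeros of the full secular function $D(\omega,l)$, which couples all layers through the entire multiple-scattering (Debye) series, so the spectrum does not split into smooth branches labelled by a ``prescribed sequence of \branch{}s'' each with its own phase. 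Worse, your condition cannot even be real-valued: across an interface the transmission coefficients have modulus different from $1$, so $\tfrac{1}{\ii}\log Q(p)$ has a nonzero imaginary part, and the rule as written would produce complex ``eigenvalues'' (leaky-mode resonances), not the real spectrum of the self-adjoint problem. The paper is explicit that this is the crux: it notes that the equations for ${}_n\omega_l$ ``cannot be solved explicitly'' and that even the residue argument of \cite{HIKRigidity} does not readily apply, which is precisely why a different device is used.

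What the paper does instead is to avoid any asymptotic description of ${}_n\omega_l$ altogether: using the Singh/Zhao identity $I_l=-\tfrac{l(l+1)}{2\,{}_n\omega_l}(\p_\omega D)_{{}_n\omega_l}$, the sum over the overtone number $n$ is recognized as a sum of residues and replaced by a contour integral in $\omega$, so that $\hat G$ involves $D^{-1}$ with \emph{generic} radial solutions (equation \eqref{e: hat G with general efunctions}); Poisson summation is then applied only in $l$, and the sum over ray topologies, the coefficients $Q_{M,i}$, the radial travel times $\tau_{M,i}$, and the combinatorial factors $n_M$ all emerge from the generalized Debye expansion of $D^{-1}U_l(r)U_l(r_0)$ inside the Green's function (Appendix \ref{a: Generalized Debye}), not from an eigenvalue condition. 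If you want to keep your two-index Poisson summation you would have to prove the existence of smooth eigenvalue branches with a real quantization phase per topology, which is exactly what fails here; otherwise Step 1 should be replaced by the secular-function/residue reduction plus Debye expansion, after which your Steps 2--3 go through essentially as in the paper.
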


\begin{remark} 
Our proof will show that one may obtain the leading order contribution of $\gamma^l$, which is $\gamma$ traversed $l$ times, from the above expression for $\gamma$. The contribution from $[\gamma^l]$ will be
\begin{multline}
   (t-lT(\pg)+ \ii 0)^{-5/2} \left(\frac{1}{2\pi \ii}\right)^{3/2}
   \ii^{l N(\pg)}n^l(\pg)Q^l(\pg)\abs{p^{-2}_\gamma l\p_p\alpha_\gamma(p_\gamma)}^{-1/2}
   \\
        \cdot L(p_\gamma) c_d \abs{SO(3)}
\end{multline}
\end{remark}

\begin{comment}
\joonas{Do the harmonics have a nice shape? If the primitive period is $T$, is the singularity with multiplicity $k$ of the form $f_k(t)=(t-T)^{-\alpha} \cdot a \dot b^k \cdot (1-p^k)^{-1/2}$ or something?}
\end{comment}

\begin{remark}
Note the above trace formula is almost identical to that of \cite{HIKRigidity} except for the $Q(\pg)$ term. This is natural since a wave corresponding to a periodic broken bicharacteristic in this nonsmooth case will have a principal symbol containing transmission and reflection coefficients while the rest of the principal symbol remains the same. The KMAH index also differs slightly than the nonsmooth case when a turning ray grazes an interface.
\end{remark}

\begin{remark} 
Similar to remark 2.5 in \cite{HIKRigidity}, our trace formula holds in an annulus where the boundary is not geodesically convex unlike the case in \cite{Mel79}. Hence, there could be periodic \emph{grazing rays} at the inner boundary of the annulus or rays that graze an interface. As described in \cite{TaylorGrazing}, grazing rays are bicharacteristics that intersect the boundary of a layer tangentially, have exactly second order contact with the boundary, and remain in $\bar M$. This is another reason our proof is via a careful study of the asymptotics of the eigenfunctions rather than the parametrix construction appearing in \cite{Mel79}, where the presence of a periodic grazing ray would make the analysis significantly more technical (cf. \cite{TaylorGrazing,MelroseGliding}). The spherical symmetry essentially allows us to construct a global parametrix (to leading order) to obtain the leading order contribution of a periodic grazing ray to the trace, which would be more challenging in a general setting (see Appendix \ref{a: Generalized Debye} and \ref{a: grazing} for the analysis and \cite{bennett1982poisson} for a similar computation). The leading order contribution of the grazing ray has the same form as in the above proposition, but the lower order contributions will not have this ``classical'' form since stationary phase cannot be applied to such terms, and will instead involve Airy functions as in \cite{bennett1982poisson} and \cite[Appendix B]{HIKRigidity}. 
Nevertheless, we note that for the main theorems, we do not need to recover the travel time of a periodic grazing ray if one exists. Travel times of sufficiently many non-grazing basic rays suffice. Our methods also produce a precise trace formula where periodic orbits are no longer simple as in \cite{Mel79}, but come in higher dimensional families (see \cite{GuillLieGroups,Creagh91,Creagh92,Gornet} for related formulas albeit in different settings). 
\end{remark}

We showed in section \ref{s: gliding as limits} that a ray with a gliding \branch{} is a limit of broken non-gliding rays, and we can also describe its contribution to the singular support to leading order. Let $\gamma$ be a periodic broken ray with travel time $T$ and contains a gliding \branch{} (see \cite[Figure 4.1]{CervenyHeadWaves} for a diagram of such a ray in the piecewise constant wavespeed setting). By lemma \ref{l: gliding}, there is a sequence of non-degenerate closed broken rays $\gamma_n$ with travel time $T_n$ such that $T_n \nearrow T$ and $\gamma_n$ converges to $\gamma$. We will state our trace formula near gliding rays in the same form as \cite[Theorem (42)]{Bennett1982}.
Denote $a_n = a_{n,[\gamma_n]}$ to be the coefficient in \eqref{eq: trace coefficient} in front of $(t-T_n+i0)^{-5/2}$ corresponding to ray $\gamma_n$. We assume that there are no periodic broken rays with travel time $T$ besides for $\gamma$ and its image under the group action. Let us introduce the notation for any real number $s$,
\[
H^{s-}_{loc} = \{ f: f \in H^t_{loc}(\RR) \text{ for } t<s\}.
\]
We will prove the following proposition.
\begin{proposition}\label{t: gliding ray trace}
Let $T$ be as above, and let $J$ be a small enough interval containing $T$   such that $\lsp(c) \cap J = \{T_n\}_{n=1}^\infty \cup \{T\}$.

Then 
\[
\operatorname{Tr} \, (\p_t G)(t))\restriction_J = \Re \sum_{n=1}^\infty a_n(t-T_n+ \ii 0)^{-5/2} + R(t),
\]
where $R(t)$ is a distribution that lies in the Sobolev space $H^{-2+\delta}$ for some $\delta > 0$.
\end{proposition}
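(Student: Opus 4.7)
The plan is to localize the analysis of Proposition~\ref{prop:Trace Formula} to a neighborhood of $T$ and carry it out uniformly over the accumulating family $\{\gamma_n\}$ together with their coalescing limit $\gamma$. I would start from the WKB/Debye representation of $\operatorname{Tr}(\partial_t G)$ constructed in Appendix~\ref{a: Generalized Debye}, multiplied by a cutoff $\chi \in C_c^\infty(J)$. On the spectral side this gives an oscillatory sum-integral in the ray parameter $p$ (and the continuous variable $\nu = l+1/2$) whose phase encodes the travel time and epicentral angle of the broken ray with parameter $p$. By hypothesis the only critical points of the $p$-phase producing a singularity in $J$ are the $p_n$ associated to $\gamma_n$ and the parameter $p_\gamma$ of the gliding ray, which is the accumulation point of $\{p_n\}$.

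Near each $p_n$ the critical point is Bott--Morse non-degenerate: $\gamma_n$ is non-gliding and the periodic conjugacy condition holds in a small neighborhood, so the stationary-phase argument of Proposition~\ref{prop:Trace Formula} applies verbatim on a $p$-neighborhood of width $\sim \abs{p_n-p_\gamma}$. It produces the leading singularity $\Re\, a_n (t-T_n+\ii 0)^{-5/2}$ plus a subleading remainder $\mathcal{R}_n$ which is smoother by at least a half derivative in the Sobolev scale. The new technical ingredient, absent in the non-gliding setting, is uniform control over $n$: using the scalings $T - T_n \sim n^{-2}$ and $p_n - p_\gamma \sim n^{-2}$ derived in Section~\ref{s: gliding as limits}, together with the explicit subleading WKB amplitudes from Appendix~\ref{a: Generalized Debye}, I would estimate $\|\mathcal{R}_n\|_{H^{-2+\delta}}$ in a way that is summable in $n$, producing a single $\delta>0$ independent of $n$ such that $\sum_n \mathcal{R}_n$ converges in $H^{-2+\delta}(J)$.

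The main obstacle is the contribution from a neighborhood of $p_\gamma$ itself, where ordinary stationary phase is inapplicable: the geometric quantity $\partial_p\alpha(p)$ blows up as $p \to p_\gamma$ because the turning radius touches the gliding interface and $\beta(r;p) = \sqrt{c(r)^{-2}-r^{-2}p^2}$ collapses there, so the critical points $p_n$ cluster and merge with $p_\gamma$. I would treat this regime by a uniform Airy-type expansion, in the spirit of Appendix~\ref{a: grazing} and Bennett~\cite{bennett1982poisson}: the coalescing critical points $\{p_n\}\cup\{p_\gamma\}$ are matched to the stationary points of a standard Airy model, and the contribution of the $p_\gamma$-neighborhood to the trace is expressed as an Airy transform of the amplitude. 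Because the Airy factor smooths the singularity at $t = T$ relative to the generic order $-5/2$, this term has Sobolev regularity strictly better than $-2$ and folds into $R(t) \in H^{-2+\delta}$.

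The hard part will be the matching between the two regimes: one must patch the isolated stationary-phase contributions at the $p_n$ (step two) into the uniform Airy expansion at $p_\gamma$ (step three) without double-counting and in a way that preserves the Sobolev-norm estimates. This requires a partition of unity in $p$ with scale depending on $\abs{p-p_\gamma}$, using the fact that where $\abs{p-p_\gamma}$ is comparable to the Airy scale the stationary-phase and Airy asymptotics must agree to the required order. Once these pieces fit together, convergence of the principal series $\sum_n a_n(t-T_n+\ii 0)^{-5/2}$ in $\mathcal{D}'(J)$ is automatic from polynomial bounds on $\abs{a_n}$ inherited from the continuity of $Q(p)$ and of $\abs{p^{-2}\partial_p\alpha(p)}^{-1/2}$ away from the Airy-transition region, so that the decomposition claimed in the proposition holds.
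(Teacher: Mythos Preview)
Your overall architecture is much more elaborate than the paper's, and the central step you defer is exactly the one the paper makes the whole proof about. The paper does not use a partition of unity in $p$, nor a uniform Airy expansion, nor any matching argument. It simply observes that each $\gamma_m$ is a ray that enters the lower medium, turns, reflects $m$ times from below the interface, and exits, so its amplitude factor is $Q_m(p_m)=Q_m' R_{m,--}^m T_{m,-+}$, and then computes explicitly that $T_{m,-+}=O(1/m)$ (because $\beta_{m,-}(b)\sim \Theta_{m,-}\sim 1/m$) and $|\partial_p^2\tau_m|^{-1/2}=O(1/\sqrt m)$ (because $\tau_m$ contains $2m$ copies of the diving-leg radial time). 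Hence $a_m=O(m^{-3/2})$, which is summable, and the principal series converges. The remainder $R(t)$ is then just whatever is left over from the Debye expansion in the gliding regime, which Appendix~\ref{a: Generalized Debye} already shows has order $O(\omega^{3/2-\epsilon})$ on the frequency side; no further Airy analysis is needed.

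The gap in your plan is the last paragraph: you assert that convergence of $\sum_n a_n(t-T_n+\ii 0)^{-5/2}$ in $\mathcal D'(J)$ is ``automatic from polynomial bounds on $|a_n|$'' coming from continuity of $Q$ and the spreading factor. That is false in this situation. The $T_n$ accumulate at $T$, so mere boundedness (let alone polynomial growth) of $a_n$ does not give convergence in any Sobolev space; you need genuine decay of $a_n$, and the mechanism for that decay --- the vanishing of the transmission coefficient $T_{-+}$ as the incidence approaches critical, combined with the blow-up of $\partial_p^2\tau_m$ --- is precisely what you have not identified. Your uniform-Airy step and matching would, if carried out, eventually rediscover this decay as the subleading behavior of the Airy amplitude, but that is a much longer road to the same estimate the paper gets in a few lines by direct computation of the scattering coefficients.
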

Note that this is a genuine error estimate even though we do not have a sharp result on which Sobolev space contains $R(t)$ since the sum in the formula above lies in $H^{-2-}_{loc}$. Proposition \ref{t: gliding ray trace} is not needed for spectral rigidity  and will be proved in appendix \ref{a: proof of gliding}. 

Also, implicit in the above proposition is that away from the singularities, the infinite sum converges. It is not clear which Sobolev space $R(t)$ belongs to since we only compute the principal term in the trace (which appears as the sum in the above proposition) using stationary phase, and we show that the remainder is in a weaker Sobolev space even though we cannot use stationary phase for it.

 In fact, it is not even clear whether a term of the form $(t-T+i0)^{-\epsilon}$ appears in $R(t)$. Denote $Z(t) = \text{Tr}(\p_t G)(t)$. Then for small enough $\epsilon > 0$, 
 $(T-\epsilon, T) \cap \lsp(c) = \{T_n\}_{n=1}^\infty$
 while $(T, T+\epsilon) \cap \lsp(c) = \emptyset$. Thus $\text{Re} Z(t)$ is $C^\infty$ for $t \in (T,T+\epsilon)$, and it becomes an interesting question, what is the asymptotic behavior of $Z(t)$ as $t \to T$ from the right? This is subtle and
 Colin de Verdi\'{e}re (see \cite{ColinClusterpoints,ColinGliding}) showed how in certain, simpler examples than what we consider here, $Z(t)$ is actually $C^\infty$ on $[T, T+\epsilon)$ for some $\epsilon$.
 Thus, the trace is actually smooth from the right up to and including $T$ (it is obviously not smooth from the left). \v{C}erven\'{y} points out in \cite{CervenyHeadWaves} that the contribution of the singularity precisely at $T$ cannot be investigated with ray theory in this setting, and it remains an open question of the precise nature of this singularity. However, in our computations of the principal term in the WKB expansion, it is not present, which is how we know it can only be in a lower order term, if it is there at all.

The trace formula allows us to recover the \simp{} length spectrum from the spectrum, and then apply the theorems on length spectral rigidity to prove Theorem \ref{thm:blspr-multilayer}.

\subsection{Proof of the trace formula} \label{s: proof of trace formula}

We need several preliminary computations before proving proposition \ref{prop:Trace Formula}. The key to the trace formula is the Debye expansion that will give geometric meaning to the leading order amplitudes of the radial eigenfunctions.  A key step will be a derivation for an alternative way of expressing $I_d$ in \eqref{I_d inner product}.
\subsubsection{A key formula for the Green's function}
As pointed out in \cite{ZhaoModeSum}, the inner product $I_d$ can be expressed in terms of the derivatives of a quantity involving the radial eigenfunctions $U_d(r)$ as well as their radial derivatives with respect to frequency $\omega$. We repeat the argument here to show that it holds even when the PDE parameters have discontinuities.

The key is obtaining a special formula for $\langle U_n, U_n \rangle$ shown in \cite{Singh69}. We recall the ordinary differential equation \eqref{eq: equation for U_2} for the radial constituent of the eigenfunction:
\begin{equation}\label{e: ode for U}
\p^2_rU + \left(\frac{2}{r} + \mu^{-1}\p_r \mu\right)\p_r U
+ \left[\omega^2-\frac{1}{r\mu} - \frac{k^2}{r^2}\right]U = 0.
\end{equation}
Here $U= U_k = U_l$ denotes the above solution for general $\omega$ while $U_n$ is a solution for such $\omega_n = {}_n\omega_l$ such that $T(U_n) = \mu(\p_r -r^{-1})U_n = 0$ at $r = \surf $ and $r = \CMB$.
It will be convenient to write
\begin{equation}\label{e: ode for U_n}
\p^2_rU_n + \left(\frac{2}{r} + \mu^{-1}\p_r \mu\right)\p_r U_n
+ \left[\omega_n^2-\frac{1}{r\mu} - \frac{k^2}{r^2}\right]U_n = 0
\end{equation}
Multiply \eqref{e: ode for U} by $U_n$ and \eqref{e: ode for U_n} by $U$ and subtract the two equation to get
\[
U_n\p^2_r U - U \p^2_rU_n
+ \left(\frac{2}{r} + \mu^{-1}\p_r \mu\right)(U_n\p_r U - U\p_r U_n)
+ \rho/\mu(\omega^2 - \omega_n^2)U U_n = 0
\]
which may be simplified to 
%more steps
\begin{comment}(by first multiplying the above equation by $r^2\mu$)
\[
\frac{d}{dr} \left[ r^2 \mu (U_n \p_r U - U \p_r U_n)\right]
= \rho r^2(\omega_n^2 - \omega^2)U_n U
\]
We then note that $\mu (U_n \p_r U - U \p_r U_n) = U_n T - U T_n$
so that
\end{comment}
\[
\frac{d}{dr} \left[ r^2 (U_n T - U T_n)\right]
= \rho r^2(\omega_n^2 - \omega^2)U_n U.
\]
We integrate over $(\CMB, \surf)$ to obtain
\[
 \frac{\left[ r^2 (U_n T - U T_n)\right]_{r=\CMB}^\surf}{\omega_n^2-\omega^2}
 = \int_\CMB^\surf r'^2 \rho(r') U(r')U_n(r') \dd r'.
\]
Above, we use that $U,U_n,T,T_n$ are continuous across the interface to apply the fundamental theorem of calculus.
Let us suppose $\omega$ is not an eigenfrequency and then take the limit as $\omega \to \omega_n$. Let
\beq \label{e: introducing D}D := [ r^2 (U_n T - U T_n)]_{r=\CMB}^\surf=
[ r^2 U_n T ]_{r=\CMB}^\surf
 \eeq
 using the Neumann conditions. Note that the solutions to $D = 0$ are precisely the eigenvalues ${}_n \omega_l$ determined by the Neumann boundary conditions. A key fact is that even for such general solutions, we can enforce the inner boundary condition $T(U)\restriction_{r = \CMB} = 0$ to leading order while still keeping $\omega$ generic. This simplifies the computations so that
 \beq \label{e: D only at outer boundary}
 D =
[ r^2 U_n T ]_{r=\surf}.
 \eeq
 Then by L'Hospital's rule using the limit $\omega \to \omega_n$, we obtain
\[
\int_\CMB^\surf r'^2 \rho(r') U_n(r')U_n(r') \dd r'
= -\frac{(\p_\omega D)_{\omega_n}}{2\omega_n}.
\]

Next we recall
\[
G(x,x_0,t) = \frac{1}{2\pi}\
          \sum_{l=0}^{\infty} \sum_{n=0}^{\infty}\
     (l + \tfrac{1}{2})\ \frac{\sin({}_n\omega_l t)}{{}_n\omega_l I_l}\ \underbrace{
        {}_n\mathbf{D}_l ({}_n\mathbf{D}_l)_0}_{\eqqcolon {}_n H_l}\
                   P_l(\cos \Theta)
\]
Where $I_d = I_{n,l}$ is equal to $l(l+1) \int_{r=\CMB}^\surf \rho r^2 U_n^2 \dd r.$

What we have shown is that
\begin{equation}\label{e: I_l in terms of D}
I_l = -\frac{l(l+1)}{2{}_n\omega_l} \left( \frac{\p D}{\p \omega}\right)_{{}_n \omega_l}
\end{equation}
so the Green's function becomes
\[
G(x,x_0,t) = -\frac{1}{\pi}\
          \sum_{l=0}^{\infty} \sum_{n=0}^{\infty}\
     \frac{l + \frac{1}{2}}{l(l+1)}\ \frac{\sin({}_n\omega_l t)}{\left( \tfrac{\p D}{\p \omega}\right)_{{}_n \omega_l}}\
        {}_n\mathbf{D}_l ({}_n\mathbf{D}_l)_0\
                   P_l(\cos \Theta).
\]
Next, observe that ${}_n \omega_l$ are exactly the zeros of $D$ so we can replace the sum over $n$ by a complex line integral over $\omega$.
First use $\text{Re} \tfrac{e^{-i\omega t}}{i} = \sin (\omega t)$. Then for fixed $l$, we compute as in \cite{ZhaoModeSum}
\begin{equation}
   \sum_{n=0}^{\infty}\
     \frac{\sin({}_n\omega_l t)}{\left( \tfrac{\p D}{\p \omega}\right)_{{}_n \omega_l}}\
        {}_n\mathbf{D}_l ({}_n\mathbf{D}_l)_0
       = -\frac{1}{2\pi}\text{Re}\int_{-\infty}^\infty D^{-1} \mathbf{D}_l (\mathbf{D}_l)_0 e^{-i\omega t}\dd \omega
\end{equation}
where the residue at $\omega = {}_n \omega_l$ of the integrand is calculated via
\[
\lim_{\omega \to {}_n \omega_l} \frac{w-{}_n\omega_l}{D} \mathbf{D}_l (\mathbf{D}_l)_0 e^{-i\omega t}
\]
and one uses L'Hospital's rule to get the desired formula. As in \cite{ZhaoModeSum}, the lack of a prefix $n$ on $U_l(r)$ and $U_l(r')$ indicates that these are general solutions which \emph{do not necessarily} satisfy the free-surface boundary conditions although \emph{we are enforcing the inner boundary condition.}

\begin{remark}
We note that \cite{HIKRigidity} also used residue theory to compute the infinite sum over $n$. However, the argument would not readily apply here since ${}_n\omega_l$ is more complicated in our case, so we employ a trick to circumvent using the equations involving ${}_n \omega_l$, which cannot be solved explicitly.
\end{remark} 

Thus, we have managed to write $G$ as the Fourier transform in $\omega$ of $D^{-1} \mathbf{D}_l (\mathbf{D}_l)_0$. Taking the inverse of the transform, we obtain
\beq \label{e: hat G with general efunctions}
\hat G(x,x_0,\omega) = \frac{1}{2\pi}\sum_{l=0}^\infty
\frac{l + \tfrac{1}{2}}{l(l+1)} D^{-1} \mathbf{D}_l (\mathbf{D}_l)_0  P_l(\cos \Theta).
\eeq
This corresponds with the residue theory in \cite{HIKRigidity} to calculate the infinite series over $n$.

\subsubsection{Poisson's formula for the Green's function}
We abuse notation and denote
\[
H(k) = k^{-2}U_l(r)U_l(r')
\]
in the formula for $G$ to not treat the curl operations at first. This will not cause risk of confusion since we will specify the exact moment we apply the curl operators. Note that $U_l$ does not necessarily satisfy the Neumann boundary conditions.

\begin{proof}[Proof of proposition \ref{prop:Trace Formula}]

By the identical argument in \cite[Appendix A]{HIKRigidity}, we use \emph{Poisson's formula} to rewrite $\hat G(x,x_0,\omega)$ in a different form.

\begin{multline} \label{eq:PSS-poss}
   \hat{G}(x,x_0,\omega) =  
   \\ \frac{1}{2\pi}\
          \sum_{s=1}^{\infty}\ (-)^s
     \int_0^{\infty} \left[
      \ D^{-1}\ H(k) \right]
          P_{k - 1/2}(\cos \Theta)
        \{ e^{-2 \ii s k \pi} + e^{2 \ii s k \pi} \} \, k \dd k
\\
   + \frac{1}{2\pi}\ \int_0^{\infty} \left[
       D^{-1}\ H(k) \right]
          P_{k - 1/2}(\cos \Theta) \, k \dd k .
\end{multline}

Note that $H(k)$ has the general eigenfunctions that do not necessarily satisfy Neumann boundary conditions.
We substitute $k = \omega p$ so $k \dd k = p^{-1} \dd p$ and the above expression becomes (see \cite[Appendix A]{HIKRigidity} for details)

\begin{multline}\label{e: G with p integral}
   \hat{G}(x,x_0,\omega) = \frac{1}{2\pi}\
          \left[ \mstrut{0.6cm} \right.
   \sum_{s = 1,3,5,\ldots} (-)^{(s-1)/2}
\\
   \int_0^{\infty} \left[
    D^{-1}\ H(\omega p) \right]
          Q_{\omega p - 1/2}^{(1)}(\cos \Theta)
          \{ e^{-\ii (s-1) \omega p \pi} - e^{\ii (s+1) \omega p \pi} \}
                   \, p^{-1} \dd p
\\
   + \sum_{s = 2,4,\ldots} (-)^{s/2}
     \int_0^{\infty} \left[
     D^{-1}\ H(\omega p) \right]
          Q_{\omega p - 1/2}^{(2)}(\cos \Theta)
          \{ e^{-\ii s \omega p \pi} - e^{\ii (s-2) \omega p \pi} \}
                   \, p^{-1} \dd p  \left. \mstrut{0.6cm} \right],
\end{multline}
where $Q^{(j)}_k (\cos \Theta)$ are certain travelling wave Legendre functions described in \cite[Appendix A]{HIKRigidity}.

To obtain the leading order asymptotics of the above formula, we will eventually employ the method of steepest descent. Before doing so, we will obtain an expression for $U_k(r)$ that has a geometric meaning representing all the multiple scattering of a single ray interacting with not just the boundary, but the interfaces as well. In the Appendix \ref{a: Generalized Debye}, we obtain a Debye series expansion of the $D^{-1} H(\omega p)$ term in the above sum.

After a lengthy computation in Appendix \ref{a: Generalized Debye}, we write down the updated formula for a single term in the sum over $s$ in the Green's function from \eqref{eq: wave propagator form N interface in app} when $r$ and $r_0$ are in the same layer,

\begin{multline}\label{eq: wave propagator form N interface}
   \simeq \frac{1}{4\pi} (-)^{(s-1)/2}
       (r r_0 c^{(+)}(r) c^{(+)}(r_0))^{-1}
       (2\pi \rho^{(+)}(r) \rho^{(+)}(r_0) \sin \Theta)^{-1/2}
\hspace*{3.0cm}
\\
   \int (\wkb_+(r;p) \wkb_+(r_0;p))^{-1/2}
   \sum_{M \in \mathbb{Z}_{\geq 0}^{4(n-4)}}n_M(p) \cdot
   \\
   \sum_{i=1}^{4}
           \exp[-\ii \omega (\tau_{M,i}(r,r_0;p) + p \Theta + (s-1) p \pi)]Q_{M,i}(p)
\\
   \exp[\ii (\pi/4) (2 N_{M,i} - 1)] (\omega p)^{-3/2} \dd p,
\end{multline}
where the formula is nearly identical to that of \cite{HIKRigidity} with several key differences that encode (using the multiindex $M$) the amplitude and broken ray path consisting of reflecting/transmitting \branch{}s. Each component of $M$ indicates the number of reflected or transmitted \branch{}s of the ray in a particular layer.
First, $Q_{M,i}(p)$ is the leading amplitude of the wave, which is a product of reflection and transmission coefficients corresponding to the \branch{}s of a ray connecting two points at $r$ and $r_0$ with epicentral distance $\Theta$ (see \eqref{e: Q_M term} and \eqref{e: defining Q_M,i}), and $n_M$ is a combinatorial coefficient counting the dynamic analogs of this ray. 
Here, $\tau_{M,i}(r,r_0;p)$ is the radial component of the travel time of a broken ray with ray parameter $p$ that connect two points at $r$ and $r_0$. It is the sum of the radial travel times of each of the reflected and transmitted \branch{}s of the ray (see \eqref{e: Phi_m radial travel time} and \eqref{tau_M formula N interface}). Hence, $\tau_{M,i}$ and $Q_{M,i}$ encode the phase and amplitude (with all the reflections/transmission) of the wave associated to a particular ray. The index $i=1, \dots, 4$ corresponds to different ray paths with zero or one reflections connecting the source and receiver located at the radii $r$ and $r_0$ analogous to \cite{ZhaoModeSum}; once we take the trace, and apply the method of steepest descent, only the terms with $i =1,4$ make a contribution to the leading order asymptotics. Moreover, when taking the trace, the terms with $i=1$ and $i=4$ are identical so we will drop the subscript $i$.
Also, $N_{M,i}= N_{M,i}(p)$ is the KMAH index which is piecewise constant depending on the value of $p$ and is also affected by a ray grazing an interface.

\subsubsection*{Method of steepest descent} As in \cite[Section 3.2]{HIKRigidity}, we carry out the method of steepest descent in the integration over $p$. At this point, the argument is identical so we will be brief. Considering \eqref{eq: wave propagator form N interface}, we interchange the order of summation and integration, and invoke the
method of steepest descent in the variable $p$. Also notice that the path of integration is beneath the real axis, while taking $\omega > 0$. We carry out the analysis for a single
term, $s=1$. For $s=2,4,\ldots$ we have to add $s p \pi$ to $\tau_{M,i}$,
and for $s=3,5,\ldots$ we have to add $(s-1) p \pi$ to $\tau_{M,i}$,
in the analysis below.

Considering
\[
\phi_{M,i,s=1}=\phi_{M,i}(p) = \phi_{M,i}(r,r_0,\Theta,p):= \tau_{M,i}(r,r_0;p) + p \Theta
\]
as the phase function (for $s=1$) and $\omega$ as a large parameter,
we find (one or more) saddle points for each $i$, where
\[
   \partial_p \tau_{M,i}(r,r_0,p)\restriction_{p = p_k} = -\Theta .
\]
Later, we will consider the diagonal, setting $r_0 = r$ and $\Theta =
0$. We label the saddle points by $k$ for each $M,i$ (and $s$). We note
that $r, r_0$ and $\Theta$ determine the possible values for $p$
(given $M$,$i$ and $s$) which corresponds with the number of rays connecting the
receiver point with the source point (allowing conjugate points). Hence, there can be multiple saddle points for a fixed $M,i,s,r,r_0,\Theta$. For
$s=1$, the rays have not completed an orbit. With $s = 3$ we begin to
include multiple orbits.

We then apply the method of steepest descent to \eqref{eq: wave propagator form N interface} with a contour deformation as in \cite[Section 3.2]{HIKRigidity} and we obtain

\begin{comment}
\begin{equation} \label{eq:abws}
\begin{aligned}
   &\frac{1}{4\pi}  (-)^{(s-1)/2}
       (r r_0 c(r) c(r_0))^{-1}
       (2\pi \rho(r) \rho(r_0) \sin \Theta)^{-1/2}
\hspace*{3.0cm}
\\
   &\qquad \qquad \int (\wkb(r;p) \wkb(r_0;p))^{-1/2}
   \sum_{M \in \mathbb{Z}_{\geq 0}^{4(n-4)}}n_M \cdot 
   \\ & \qquad \qquad \qquad \sum_{i=1}^{4}
           \exp[-\ii \omega (\tau_{M,i}(r,r_0;p) + p \Theta + (s-1) p \pi)]
\\
   &\qquad \qquad \qquad \qquad
   \exp[\ii (\pi/4) (2 N_{M,i} - 1)]  (\omega p)^{-3/2} Q_{M,i}(p)\dd p
\\[0.25cm]
   &\hspace{1in}\simeq \frac{1}{4\pi} (-)^{(s-1)/2}
       (r r_0 c(r) c(r_0))^{-1}
       (2\pi \rho(r) \rho(r_0) \sin \Theta)^{-1/2}
\\[0.2cm]
   &\qquad \sum_{M \in \mathbb{Z}_{\geq 0}^{4(n-4)}}n_M\sum_{i=1}^{4} \sum_k
   \left[ \omega^{-2}p^{-3/2} (\wkb(r;.) \wkb(r_0;.))^{-1/2}
         \abs{\partial_p^2 \tau_{M,i}(r,r_0;.)}^{-1/2} \right.
         \\& \hspace{1 in}\qquad \qquad \qquad Q_{M,i}(p)\Big]_{p = p_k}
\hspace{0in}
                 \exp[-\ii \omega T_{Mik} + \ii \tilde N_{Mik} (\pi/2)] ,
\end{aligned}
\end{equation}
\end{comment}
\begin{multline}
\simeq -\frac{2\pi}{(2\pi i)^{3/2}} (-)^{(s-1)/2}
       (r r_0 c(r) c(r_0))^{-1}
       (\rho(r) \rho(r_0))^{-1/2}
\\[0.2cm]
  \sum_{M \in \mathbb{Z}_{\geq 0}^{4(N-4)}}  n_M\sum_{i=1}^4 \sum_k
   \left[ p (\wkb(r;.) \wkb(r_0;.))^{-1/2}
         \abs{\partial_p^2 \tau_{M,i}(r,r_0;.)}^{-1/2}Q_{M,i}(p) \right]_{p =
         p_k}
\\
               \frac{1}{2\pi}\int_0^{\infty}i\omega^{3/2}  \exp[-\ii \omega(
               T_{Mik}-t) + \ii \tilde N_{Mik} (\pi/2)] \dd\omega,
\end{multline}
as $\Theta \to 0$,
where
\begin{equation}
\begin{aligned}
   T_{Mik} &= T_{s;Mik}(r,r_0,\Theta)
       = \tau_{M,i}(r,r_0;p_k) + p_k \Delta_s ,\\
   \tilde N_{Mik} &= N_{M,i} - \tfrac{1}{2} (1 -
               \sgn \partial_p^2 \tau_{M,i} \restriction_{p = p_k}) ,
\end{aligned}
\end{equation}
in which
\begin{equation}
   \Delta_s =
\begin{cases}
   \Theta + (s-1) \pi & \text{if $s$ is odd} \\
   -\Theta + s \pi    & \text{if $s$ is even.}
                   \end{cases}
\end{equation}
The $\tilde N_{Mik}$ contribute to the KMAH indices, while the $T_{Mik}$ represent
geodesic lengths or travel times. The orientation of the contour
(after deformation) in the neighborhood of $p_k$ is determined by
$\sgn \partial_p^2 \tau_{M,i} \restriction_{p = p_k}$. Besides for the geometric spreading factor, the leading order amplitude is $Q_{M,i}(p)$, which is just a product of reflection and transmission coefficients corresponding to the \branch{}s of the associated ray; terms involving curvature of the interface do not appear in the lead order term and only make an appearance in the subsequent term that is not necessary for the theorem. We note that
\begin{itemize}
\item
$\tilde N_{Mik} = \tilde N_{s;Mik}(r,r_0,\Theta)$ for multi-orbit waves
  ($s=3,4,\ldots$) includes polar phase shifts produced by any angular
  passages through $\Theta = 0$ or $\Theta = \pi$ as well;
\item
if $r$ lies in a caustic, the asymptotic analysis needs to be adapted
in the usual way.
\end{itemize}

\begin{comment}
Next, as in \cite[Section 3.2]{HIKRigidity}, we consider the forward rays with their time reversal to deal with the $(\sin \Theta)^{-1/2}$ term as $\Theta \to 0$. We also take an inner product to deal with the curl terms in the definition of toroidal modes. After taking the inverse Fourier transform, we have \cite[Section 3.2]{HIKRigidity}
\end{comment}

Next, we take the trace of $\p_t G$ by restricting to $(r=r_0,\Theta =
0)$ and integrating. The phase function on the diagonal is $T_{Mik} =
\tau_{M,i}(r,r,p_k)+\pi(s-1)p_k$ and we apply stationary phase in the
variables $r,\theta,\psi$ with large parameter $\omega$. 
This is a standard computation exactly as done in \cite[Section 3.2]{HIKRigidity}.

Following the computation in \cite{HIKRigidity}, we obtain
the leading order term in the trace formula as
\begin{align} \label{eq: principal term spheric symm}
&\operatorname{Re}\sum_s\sum_{M \in \mathbb{Z}_{\geq 0}^{4(N-4)}}  \sum_k\
         \left(\frac{1}{2\pi \ii}\right)^{3/2}(t-T_{s;Mk}+\ii0)^{-5/2}
       \ii^{M_{k}+s-1}\\
&\qquad \qquad \qquad \qquad      \cdot c Q_{M}(p_k) L_{k}\abs{p_k^{-2}\p^2_p\tau_{M}(p_k)}^{-1/2}
\frac{1}{2\pi}\abs{SO(3)},
\end{align}
where $L_{k}$ is the travel time of a ray with only transmitted \branch{}s from $r= \surf$ to $r=R^*$ (see \eqref{e: transmitted length L}).
Note that the critical set becomes $\Theta_{M,k} = \p_p \tau_{M}(p_k)$ so
$\p^2_p \tau_{M}(p_k)= \p_p\alpha_{M,k}$ when restricting to the diagonal. Also, we use that here,
\[
   T_{s;Mk} = T_{s;Mk}(r,r;p_k) = \tau_{M}(r,r;p_k)
     + \left\{\begin{array}{rcl}
           p_k (s-1) \pi & \text{if $s$} & \text{odd} \\
           p_k s \pi     & \text{if $s$} & \text{even}
              \end{array}\right.
\]
is independent of $r$ on the critical set.
We note that $p_k$ exists only for $\abs{M}$, and $s$, sufficiently large,
which reflects the geometrical quantization.

\end{proof}

\subsection*{Harmonics of the principal ray}
From the argument above, if $\gamma$ is a periodic orbit with period $T_{s,Mik}$ for some indices $s,M,i,k$ described above, the principal symbol of the contribution of $\gamma$ to the trace is as above. We can immediately write down the leading order contribution of $\gamma^l$ which is $\gamma$ travelled $l$ times. The travel time will be $lT_{s,Mik}$ . Then
$Q_{M,i}(p_k)$ becomes
$Q_{M,i}(p_k)^l$, $M_{ik}$ becomes $lM_{ik}$, and $p_k^{-2}\p_p \alpha_{M,ik} $ becomes $lp_k^{-2} \p_p \alpha_{M,ik}$.

\subsection{Spheroidal modes}\label{s: spheroidal modes}
The above trace formula, theorems \ref{t: spectral rigiditiy} and \ref{thm:blspr-multilayer} are essentially dealing with a scalar wave equation with a single wavespeed. The analysis for toroidal modes reduced to a scalar wave equation with an associated Laplace-Beltrami operator. However, our methods can also treat the isotropic elastic setting to include spheroidal modes (with the PDE described in \cite[Chapter 8]{DahlTromp}) where two wavespeeds ($c_P$ and $c_S$) are present corresponding to the $P$-waves and the $S$-waves, and there is a spectrum associated to the elliptic, isotropic elastic operator. In the elastic setting, each \branch{} of a broken geodesic will be a geodesic for either the metric $c_P^{-2}dx^2$ or $c_S^{-2}dx^2$, so there is an associated length spectrum as well that includes \emph{mode converted} \branch{}s. Thus, theorem \ref{t: spectral rigiditiy} can be extended to the case of the elastic operator by using corollary \ref{cor:2-speeds} if the length spectrum (or a dense subset) can be recovered by a trace formula from the spectrum. 
The theorem would take the form
\begin{theorem}[Elastic spectral rigidity with moving interfaces]
\label{t: elastic spectral rigiditiy}
Fix any $\eps>0$ and $K\in\N$, and let $c_{P,\tau}(r)$ and $c_{S,\tau}(r)$ be an admissible family of profiles with discontinuities at $r_k(\tau)$ for all $k=1,\dots,K$.
Suppose that the length spectrum for each $c_{P/S,\tau}$ is countable in the 
ball $\bar B(0,1) \subset \RR^3$.
Assume also that the length spectrum satisfies the principal amplitude injectivity condition and the periodic conjugacy condition.

Suppose
$\spec(\tau)=\spec(0)$ for all $\tau\in(-\eps,\eps)$.
Then $c_{P,\tau}=c_{P,0}$, $c_{S,\tau}=c_{S,0}$ and $r_k(\tau)=r_k(0)$ for all $\tau\in(-\eps,\eps)$ and $k=1,\dots,K$.
\end{theorem}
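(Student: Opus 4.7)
The plan is to mirror the strategy of Theorem~\ref{t: spectral rigiditiy}, substituting the scalar trace formula by an elastic analog and invoking Corollary~\ref{cor:2-speeds} in place of Theorem~\ref{thm:blspr-multilayer} at the end. Concretely, I would split the work into three pieces: (i) derive a trace formula for the Neumann spectrum of the isotropic elastic operator whose singular support contains, after applying the principal amplitude injectivity condition, the lengths of all periodic broken rays of the combined two-speed system; (ii) identify within that singular support the lengths of the \simp{} periodic rays for $c_{P,\tau}$ and for $c_{S,\tau}$ separately; (iii) apply Corollary~\ref{cor:2-speeds} with the noise set absorbing every mixed or mode-converted orbit.

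For step (i), I would extend the spheroidal mode analysis sketched in Section~\ref{s: spheroidal modes} by generalizing the Debye expansion of Appendix~\ref{a: Generalized Debye} to the coupled $P$--$S$ setting. The scalar Debye series is summed over multi-indices $M$ encoding the number of reflected and transmitted \branch{}s in each layer; in the elastic setting each \branch{} carries an additional label in $\{P,S\}$, and the scalar reflection/transmission coefficients are replaced by the entries of the $4\times 4$ scattering matrix relating incident, reflected $P/S$ and transmitted $P/S$ amplitudes at every interface. Apart from this enlarged combinatorial bookkeeping, the geometric structure is preserved: each term in the expanded Debye sum corresponds to a broken ray whose phase is the two-speed travel time (with $P$-legs weighted by $\wkb_P$ and $S$-legs by $\wkb_S$) and whose amplitude is the appropriate product $Q_M(p)$ of scattering coefficients. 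Stationary phase in $p$ followed by stationary phase in $(r,\theta,\psi)$ on the diagonal then produces a singularity of the form $(t-T+\ii 0)^{-5/2}$ at every length $T$ of a periodic two-speed ray, with coefficient proportional to $n(p)\,Q(p)\,L(p)\,|p^{-2}\partial_p \alpha|^{-1/2}$, exactly as in Proposition~\ref{prop:Trace Formula}.

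For step (ii), the principal amplitude injectivity condition ensures that distinct equivalence classes contribute non-canceling singularities, so each periodic two-speed ray length appears in $\mathrm{singsupp}\,\Tr(\p_t G)$. In particular, the basic length spectra $\blsp(c_{P,\tau})$ and $\blsp(c_{S,\tau})$ are recovered from the spectrum. For step (iii), apply Corollary~\ref{cor:2-speeds}: the hypothesized equality $\spec(\tau)=\spec(0)$ forces $\Tr(\p_t G)$ and hence its singular support to be independent of $\tau$, so the combined two-speed length spectrum $\lsp(\tau)$ is $\tau$-independent (absorbing all mode-converted orbits into the noise); Corollary~\ref{cor:2-speeds} then yields $c_{P,\tau}=c_{P,0}$, $c_{S,\tau}=c_{S,0}$ and $r_k(\tau)=r_k(0)$ for all admissible $\tau$ and $k$.

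The main obstacle is step (i): rigorously deriving the coupled Debye expansion in the presence of mode conversion and verifying that the leading singular contribution of each periodic two-speed ray retains the clean form required for principal amplitude injectivity to be meaningful. The extra subtlety is at interfaces where an incoming $P$-wave meets its critical angle for mode-converted $S$-transmission (and vice versa), producing gliding behavior for one polarization while another transmits regularly. These critical angles form a codimension-one locus; by the limiting argument of Section~\ref{s: gliding as limits} together with Proposition~\ref{t: gliding ray trace} applied polarization-by-polarization, such rays contribute only lower Sobolev-regularity remainders and do not obstruct the recovery of the \simp{} lengths. Once this elastic trace formula is in hand, the rigidity conclusion follows from Corollary~\ref{cor:2-speeds} with essentially no further work.
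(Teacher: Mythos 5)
Your proposal follows essentially the same route as the paper: extend the trace formula of Proposition~\ref{prop:Trace Formula} to the spheroidal/elastic setting via WKB solutions and an elastic Debye expansion whose scattering coefficients account for mode conversion, use the principal amplitude injectivity condition to extract the \simp{} length spectra of $c_{P,\tau}$ and $c_{S,\tau}$ from the singular support of the trace, and conclude with Corollary~\ref{cor:2-speeds}; the paper itself gives only an outline at this same level of detail. The one small nuance is that the spectrum determines the singular support of the trace rather than $\lsp(\tau)$ itself, so the ``noise'' set absorbing mode-converted and mixed orbits should be taken to be that singular support (as in the proof of Theorem~\ref{t: spectral rigiditiy}), which is exactly the role you assign it.
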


%\textcolor{red}{Joonas: A key step is that we only use basic rays for the length spectral rigidity and a countable number may be "thrown away". Can you describe this a little more precisely here?}

Thus, all we need is to extend proposition \ref{prop:Trace Formula} to the elastic case and then apply corollary \ref{cor:2-speeds}. Since the calculation is similar, but a more tedious version of the case we consider here, We will just provide an outline of the proof.

\begin{enumerate}
    \item 
The Green's function associated to just the spheroidal modes can be computed analogously as in \cite[Equation (31)]{ZhaoModeSum}. 

\item One can then obtain (vector-valued) WKB solutions to approximate spheroidal modes, which are eigenfunctions of the static, elastic operator as in \cite[Appendix A]{ZhaoModeSum} and \cite[Chapter 8]{DahlTromp}.

\item
 We can use the methods presented here (with the method of steepest descent for the asymptotic analysis) to then determine the leading order asymptotics of the sum of eigenfunctions to obtain a corresponding trace formula. The scattering coefficients will be determined by the elastic transmission condition, with an associated Debye expansion as done in appendix \ref{a: Generalized Debye}. Afterward, the stationary phase analysis
will lead to the same form as \eqref{eq: trace coefficient} but the reflection and transmission coefficients appearing in $Q(\pg)$ will be different to account for mode conversions. Also, $\alpha(\pg)$ will be modified with the appropriate wave speed appearing in each constituent of the linear combination of epicentral distances that correspond to an associated $P$ or $S$ \branch{} of $\gamma$.
 
 \item
The computation in \cite{ZhaoModeSum} does not treat glancing nor grazing rays, but their formulas can be modified with the methods presented here to account for such rays as well. The $n(\pg)$ appearing in \eqref{eq: trace coefficient} will again count the number of ``dynamic analogs'' associated to $\gamma$ as described in \cite{HronCriteria} for the spheroidal case; that paper also has several figures of broken geodesics in the spheroidal case.

\end{enumerate}

Under an analog to the principal amplitude injectivity condition for spheroidal modes, one can recover the \simp{} length spectrum for each of the two wave speeds. One then uses  Corollary~\ref{cor:2-speeds} to recover both wave speeds.

\section{Declarations}
\subsection*{Funding}
MVdH was supported by the Simons Foundation under the MATH + X program, the National Science Foundation under grant DMS-1815143, and the corporate members of the Geo-Mathematical Imaging Group at Rice University.
JI was supported by the Academy of Finland (projects 332890 and 336254).

\subsection*{Conflict of interest/Competing interests}
{\bf Financial interests:} The authors declare they have no financial interests.
\\

\noindent {\bf Non-financial interests:} The authors declare they have no non-financial interests.

\subsection*{Availability of data and material} Not applicable

\subsection*{ Code availability} Not applicable

\appendix

\section{Class of metrics satisfying (A1)-(A4)} \label{app: example satisfying A1-A4}

Here, we present a class of metrics that meet all the assumptions mentioned in the theorems. Demonstrating the satisfaction of assumption (A.2) on a layered manifold (or any analogous concept of a simple length spectrum) is a challenging task, even for the most basic Herglotz metrics. Therefore, we instead prove that this assumption is generic within our class of metrics. We subsequently prove that the Euclidean ball possesses a simple length spectrum, thereby establishing the non-emptiness of the class of metrics satisfying our assumptions.

It is worth noting that the proof of Theorem \ref{t: spectral rigiditiy}  only necessitates the recovery of the basic length spectrum, and one may omit any finite number of such lengths. Thus, we can allow for cancellations in the trace formula as long as they do not occur infinitely many times for periods in the basic length spectrum.

We will begin by examining a single interface at $r = \refll$ and then proceed to generalize this approach to additional layers. 
% We consider the example given in \cite[Section 3.7]{Cerveny_2001}. 
Let us select a function $c(r)$ such that
\[
(c/r)^{-2} = a + b\ln(r), \text{ for } \refll < r < \surf
\]
and 

\[
(c/r)^{-2} = a' + b'\ln(r), \text{ for } \CMB < r < \refll
\]
where $a, a', b, b' > 0$ are parameters, ensuring that condition (A4) is satisfied. Since the proof of the main theorem relies solely on the basic length spectrum, we will verify our assumptions for this specific case.

For basic rays in the upper layer, we can calculate the epicentral distance of a single leg as follows:
\[
\alpha(p) = \int_{R^*}^1 \frac{p}{r'\sqrt{ (c/r')^{-2}-p^2}} \ dr'
= \int_{R^*}^1 \frac{p}{r'\sqrt{ a+b\ln(r)-p^2}}.
\]
Here, the value of $R^*$ depends on the parameter $p$. We can further break down the calculation:
\[
\alpha(p)
= \begin{cases}
\frac{2p}{b} [\sqrt{a - p^2} -  \sqrt{a+b\ln(\mathbf f) - p^2}] &\text{ if } 0 < p < \sqrt{a+b\ln(\mathbf f)}\\
\frac{2p}{b} \sqrt{a - p^2}  & \text{ if } \sqrt{a+b\ln(\mathbf f)} < p < \sqrt a
\end{cases},
\]
where in the case of reflecting rays, $R^* = \mathbf f$ in the first layer, and for diving rays (as discussed in Appendix \ref{a: Generalized Debye}), $R^*$ is determined by the condition $(c(R^*)/R^*)^{-2}-p^2 = 0$. In this scenario, we have:
\[
\alpha'(p) = \frac 2 b \sqrt{a-p^2} - 2p^2/(b\sqrt{a-p^2})
\]
This expression only equals zero when $p^2 = a/2$. We can similarly verify that $\alpha'(p)$ does not vanish in the reflecting region ($0<p<\sqrt{a+b\ln(\mathbf f)} $) except for $p^2 = a(a+b\ln(\mathbf f))/(2a+b\ln(\mathbf f))$. A similar argument holds for basic rays in the second layer, and this argument readily extends to any finite number of layers, each having a metric within the above class. If we have $N$ layers with similar such metrics, there is an associated $\alpha_i(p)$ for layer $i$. Then for fixed nonnegative integers $n_1, \dots n_N$, the epicentral distance is $\alpha(p) = \sum n_j \alpha_j$ so $\alpha'(p) =0$ for at most finitely many values of $p$. Thus, (A3) is satisfied

To get a periodic basic ray, we need $\alpha = 2\pi q$ for some $ q \in \mathbb Q$. For diving rays, this implies
\[
p^2(a-p^2) = \pi^2 q^2 b^2.
\]
Solving for $p$, we obtain
\[
p^2 = \frac{a \pm \sqrt{a^2 - 4\pi^2 q^2b^2}}{2}
\]
Hence, periodic diving rays are in one-to-one correspondence with elements of $\mathbb Q$ such that \[a^2 - 4\pi^2 q^2 b^2 \geq 0.\] If we choose $a, b$ such that $a/b \notin \pi \mathbb Q$, then $p^2 \neq a/2$ for a periodic ray, and the clean intersection hypothesis is satisfied on all diving periods. A similar calculation provides the result for reflecting rays and rays in the second layer. Periodic reflecting rays in a layer are in one-to-one correspondence with elements of $\mathbb Q$ such that $ad - \pi^2 q^2 b^2 \geq 0$ where $d = a+b\ln \mathbf f$. If we restrict to $a,b$ such that $\sqrt{ad}/b \notin \pi \mathbb Q$, then the clean intersection hypothesis is satisfied for all basic reflecting rays. The extension to any finite number of layers follows analogously since we only require the basic length spectrum.

It is important to note that this argument can also extend to non-basic rays since assumption (A.1) could fail on at most finitely many primitive periodic rays, which still suffices for length spectral rigidity.
%In conclusion, it's worth mentioning that the proof of the main theorems doesn't necessitate the recovery of all the lengths in $\lsp(c)$. Even if the clean intersection hypothesis fails for a finite number of rays and those lengths are not recovered with the trace formula, the theorems remain valid with the same proof.

\begin{remark}
It should be noted that the proof of the main theorems doesn't require the recovery of all lengths in $\lsp(c)$. Therefore, even if the clean intersection hypothesis fails for a finite number of rays and those lengths are not recovered with the trace formula, the theorems remain valid with the same proof.
\end{remark}

\subsection{Genericity of assumption (A.2)}
To meet the requirements of (A.2), we initially consider two layers with a wave speed given having the form $c = a+b \ln(r)$ in each layer, and prove that assumption (A.2) is generic (in a sense described below) among metrics in this class. The argument we provide will readily extend to the case of a finite number of layers.
\begin{comment}
The choice of $c_1$ aligns with the criteria established in \cite[appendix C]{HIKRigidity} to satisfy all four assumptions. The length spectrum for this layer, denoted as $\lsp_1$, is discrete, except for a potential accumulation point corresponding to the gliding ray at the boundary. If we introduce a small perturbation to $c_1$, denoted as $c_2$, resulting in a corresponding length spectrum $\lsp_2$, then $\lsp_1 \cap \lsp_2$ will be finite. As a result, terms in a trace formula corresponding to elements in $\lsp_1$ cannot cancel out with those from $\lsp_2$ except for possibly finitely many instances. We can repeat this process for $N$ layers, ensuring that only finitely many cancellations are feasible in a trace formula.
\end{comment}
Let $M$ represent the annulus where we connect both of these layers (layer $1$ and layer $2$), with wave speed $c_1$ in the upper layer and $c_2$ in the lower layer. Let $c$ denote the piecewise smooth wave speed with the length spectrum $\lsp$. %According to the aforementioned argument, the possibility of an infinite number of cancellations in a trace formula for $g$ must involve closed rays that traverse both layers.

Suppose we have a closed basic ray $\gamma$ with a period $T = T(p_\gamma)$ such that a cancellation occurs in the trace formula. Let's outline the conditions under which this is possible. First, denote the closed ray $\beta = \beta(p)$ that corresponds to such a cancellation in the trace formula where $[\gamma] \neq [\beta]$. Hence, the principal term in the wave trace corresponding to $[\gamma]$ cancels with that of $[\beta]$.
Let $L_i= L_i(p)$ denote the length of one leg in layer $i$ for ray $\beta$.  There exist $n_1$ and $n_2$ nonnegative integers that satisfies the equation:

\begin{equation}\label{e: T degenerate}
T = n_1 L_1(p) + n_2 L_2(p)
\end{equation}
\begin{comment}
%geometric injectivity condition (maybe not needed)
We denote the amplitude in the trace formula corresponding to $\gamma$ as $A=Q(p_\gamma)A_{p_\gamma}$, where $Q = 1$ or $R_{++}^n$ for some $n$. Then, $p, n_1, n_2$ must also satisfy the equation:

\begin{equation}\label{e: spread degenerate}
Q(p_\gamma)A_{p_\gamma}=Q(p)A_{p}
\end{equation}

Here, $Q(p) = T^{n_1}_{+-}(p)T^{n_1}_{-+}(p)R_{++}^{n_2}(p)$, and it also has only finitely many solutions for $p$.
\end{comment}
Furthermore, to ensure periodicity, $n_1, n_2, p$ must also satisfy:
\begin{equation}\label{e: angle degenerate}
n_1(\alpha_1+\alpha_2) + n_2 \alpha_2 = 2\pi m
\end{equation}
for some integer $m$.

\begin{definition}
We will define a period $T = T(p_\gamma)$ as \emph{not simple} if condition \eqref{e: T degenerate} is satisfied for some $n_1, n_2, q, m,$ and closed ray $\beta$ with a ray parameter $p$ such that $[\beta] \neq [\gamma]$, as described above. These conditions represent the simultaneous requirements necessary for a cancellation in the wave trace. In other words, if a period $T$ is simple, then it will not result in a cancellation in the wave trace.  We will say that $\blsp(c)$ is \emph{simple} if each non-gliding period in this set is simple. A period is gliding when its associated ray has a gliding leg, and such periods are not needed to prove length spectral rigidity.
\end{definition}

Suppose we have a class of metrics
\[
\mathcal C = \{ g_c:
(c/r)^{-2} = a + b\ln(r) \text{ for some $a,b > 0$ in each layer }
\}
\]
Our goal is to demonstrate that
\[E = \{ g_c \in \mathcal C: \blsp(c) \text{ is simple } \}\]
is \emph{generic} in the Baire category sense. In other words, within the class $\mathcal C$, metrics generically satisfy (A1)-(A4). Accumulation points in the length spectrum can pose difficulties, so it's helpful to define a set for $\delta > 0$ as follows:
\[
\mathcal G_\delta
= \{ t\in \RR: t \notin (2\pi l - \delta , 2\pi l + \delta) \cup (2\pi \mathbf f l - \delta, 2\pi \mathbf f l + \delta ) \text{ for every integer }l \}  
\]
Note that the gliding ray traveling once around the outer boundary or interface has a length of $2\pi$ or $2\pi \mathbf f$, respectively. Therefore, $\mathcal G_\delta$ represents lengths that are at least $\delta$ away from each gliding ray along the outer boundary or interface. Thus, throughout the arguments below, closed rays will have periods that stay a fixed distance away from any period associated to a gliding ray.

%Here, we let $\blsp(c)$ denote only basic rays without legs that are tangential to the interface, of which there could only be at most finitely many.

Let us define
\begin{multline}
E_n = \{ g_c \in \mathcal C: \text{for each } T \in \blsp(c), \text{ if } T\leq n \\
\text{ and } T \in \mathcal G_{1/n} \text{ and $T$ is Morse-Bott nondegenerate,}\text{ then $T$ is simple } \},
\end{multline}
so that $E = \cap_n E_n$. 
Then we have
\[
E \subset \dots \subset E_n \subset E_{n-1} \subset \dots
\subset E_2 \subset E_1 \subset E_0 := \mathcal C.
\]
We would like to prove that $E_n$ is open and that $E_n$ is dense in $E_{n-1}$ for each $n$ to apply the Baire category theorem. This will suffice since the proof of the main theorems go through even if we do not recover periodic lengths in the lengths spectrum that align with a gliding period. Furthermore, we demonstrated above that within this class of metrics, only a finite number of degenerate periods can exist. The spectral rigidity proof holds even if we do not recover any finite subset of the basic length spectrum. Therefore, all metrics in the set $E$ satisfy the assumptions necessary for theorem \ref{t: spectral rigiditiy} to hold.

We will follow similar arguments to \cite{Anasov1883}. Using the notation there, for a metric $g$, denote $\{ \Phi_t^g\}$ to be the bicharacteristic flow on $T^*M$, which restricts to a bicharacteristic flow on $S^*M.$ For a particular ray $\gamma$, we denote $N_{refl}= N_{refl}([\gamma]) = (N^{(1)}_{refl}, N^{(2)}_{refl}),$
where
\[
N^{(i)}([\gamma]) = \text{ number of reflections of $\gamma$ in the $i$'th layer}
\]
and 
\[
N_{WN} = N_{WN}([\gamma]) = \text{winding number of $\gamma$}.
\]

\subsubsection{Openness}
%\texorpdfstring{$E_k$}{} is open }
To prove that $E_k$ is open, let $g_c = g_{c_\mathbf a} \in E_k$ with $\mathbf a = (a,b, a', b')$. We want to demonstrate that there is a neighborhood $O\subset \mathbb R^4$ of $\mathbf a$ such that $g_{c_\mathbf b} \in E_k$ for each $\mathbf b \in O$.

Assume, for the sake of contradiction, that this is not the case. Then there exists a sequence $\mathbf a_n \to \mathbf a$ and a sequence of periods $T_n = T_n(p_n) \in \blsp(c_{\mathbf a_n})$, each of which is not simple and has a length $\leq k$ and belongs to $\mathcal G_{1/k}$. We will show that a subsequence of these periods converges to a period in $\blsp(c_{\mathbf a})$ that is also not simple, leading to a contradiction.

By definition, for each $n$, there exists closed rays $\gamma_n, \tilde \gamma_n$
with associated $p_n , \tilde p_n$ such that $[\gamma_n] \neq [\tilde \gamma_n]$, but the associated period $\tilde T_n = T(\tilde p_n) \in \lsp(c_n)$ is such that:
\[
T(p_n) =  T(\tilde p_n).
\]

%and 
%\[
%A_n(p_n) = \tilde A_n(\tilde p_n)
%\]
%where $A_n, \tilde A_n$
%are the leading coefficients in the %wave trace formula of %\ref{prop:Trace Formula}. 

Each $p_n$ is associated to a periodic ray equivalence class, so there is a $v_n \in S^*M$ such that
\[
\Phi^{g_n}_{T_n} v_n = v_n.
\]
Likewise, we have $\tilde v_n \in S^*M$ such that
\[
\Phi^{g_n}_{\tilde T_n} \tilde v_n = \tilde v_n.
\]
By compactness and passing to subsequences (while still denoting the index by $n$), we can find $T, \tilde T \leq k$, $v, \tilde v, p, \tilde p$ such that $T_n \to T$, $\tilde T_n \to \tilde T$, $p_n \to p$, $\tilde p_n \to \tilde p$, $v_n \to v$, and $\tilde v_n \to \tilde v$. Since we know $g_n \to g_c$, upon taking limits in the equations above, we obtain:
\[
\Phi_T^{g_c} v = v
\]
and
\[
\Phi_{\tilde T}^{g_c} \tilde v = \tilde v.
\]
and $T(p) = \tilde T(\tilde p)$. Let $\gamma, \tilde \gamma$ denote the rays $\Phi^{g_c}_t  v$ and $\Phi^{g_c}_t  \tilde v$ projected to $M$ respectively. They are periodic with periods $T$ and $\tilde T$, respectively.
The assumption regarding $\mathcal G_{1/k}$ implies there exists $\delta >0$ such that $L_1(\tilde p_n) > \delta$ and $L_2(\tilde p_n) >\delta$ for large enough $n$ and assuming the ray is in both layers. If the ray is basic, then a similar argument will hold.  By \eqref{e: T degenerate}, we have
\[
T_n = z_{1,n} L_1(\tilde p_n) + z_{2,n} L_2(\tilde p_n) > (z_{1,n}+z_{2,n})\delta
\]
for some nonnegative integers $z_{1,n}, z_{2,n}$. This shows that $N_{n,refl}, N_{n,WN}$ remain bounded for large enough $n$, and in particular, become constant. Thus, $N_{n,refl} = N_{refl}([\gamma])$ and 
$N_{n,WN} = N_{WN}([\gamma])$ for large enough $n$.
Let us show that $[\gamma] \neq [\tilde \gamma]$. For the sake of contradiction, suppose $[\gamma] = [\tilde \gamma]$. This would imply that $N_{refl} = \tilde N_{refl}$ and $N_{WN} = \tilde N_{WN}$. Hence, 
$N_{n, refl} = \tilde N_{n, refl}$ and $N_{n, WN} = \tilde N_{n, WN}$ for large enough $n$. Since $T_n = \tilde T_n$, then $\gamma$ and $\tilde \gamma$ are kinematic analogs so that $[\gamma_n] = [\tilde \gamma_n]$.
This is because since $p = \tilde p$, then $p_n$ is near $\tilde p_n$ for large $n$, and due to nondegeneracy, if $T(\sigma)$ denotes the travel time for $\sigma$ near some $p_n$, then $\p_\sigma \restriction_{\sigma = p_n} T \neq 0$. This implies $T(p_n) \neq T(\tilde p_n)$ when $p_n \neq \tilde p_n$ for large enough $n$. Thus, for large $n$, $[\gamma_n ] = [\tilde \gamma_n]$, which is a contradiction. Hence, $[\gamma] \neq [\tilde \gamma]$ so $T$ is a non-simple period for $g_c$, which is a contradiction. This establishes openness.
\\ \\

\subsubsection{$E_{k+1}$ is dense in $E_k$}
%\texorpdfstring{$E_{k+1}$}{} is dense in \texorpdfstring{$E_k$}{}}

Let $g_c \in E_k$. We will show there exists a $\tilde g \in E_{k+1}$ arbitrarily close to $g_c$. 

First, let us assume that $M$ is a single layer, and describe afterward how the proof extends to multiple layers. Let $[\gamma_1], \dots [\gamma_N]$ be all the distinct closed rays equivalence classes whose periods are $\leq k+1$, belong to $\mathcal G_{k+1}$, and are non-degenerate. In view of the non-degeneracy computation at the start of the section, and that we are a fixed distance $1/(k+1)$ away from the periods of gliding rays, the proof in \cite{Anasov1883} shows that there are finitely many such trajectories.

%Anasov argument
\begin{comment}
{\bf The closed manifold case reviewed:} Then we claim that there is a neighborhood $\mathcal U$ of $g_c$ and functions
\[
v_i : \mathcal U \to S^*M
\]
\[
\omega_i:\mathcal U \to \mathbb R_t
\]
such that the rays
\[
\Phi^g_{t} v_1(g), \dots
\Phi^g_{t} v_N(g)
\]
are periodic with periods $\omega_1(g), \dots, \omega_N(g)$ respectively. Also, there is a neighborhood $U$ of
\[
\cup_{i,t} \Phi_{g_c}
\]
and a neighborhood $\mathcal V \subset \mathcal U$, such that for any metric $g \in \mathcal V$, if the ray $\Phi_t^g v$ is periodic with period $\leq k+1$, then it coincides with one of the periodic rays
\[
\Phi^g_{t} v_1(g), \dots
\Phi^g_{t} v_N(g).
\]

This uses the nondegeneracy of the rays $\gamma_1 \dots \gamma_N$ and was done in the closed manifold case.
We may do something similar for our boundary case. 

{\bf Our manifold with boundary case:}
\end{comment}
 For each $i$, we have
an associated epicentral distance $\alpha(p_i)$ for the ray $\gamma_i$.
Consider any analytic perturbation of the metric $g_c$ denoted $g_{c_h}$ with parameter $h$.
By \cite{HIKRigidity}, we may find a $\epsilon > 0$ and a function
\[
\phi_i : (-\epsilon, \epsilon)_h \to (0, \surf/c(\surf))_p
\]
such that the ray
\[
\gamma_{i, h} = \gamma^{g_{c_h}}_i(\phi_i(h))
\]
has associated
\[
\alpha_{i, h} = \alpha_i(\phi_i(h))
= \alpha(p_i)
\]
and is thus periodic. Also $N_{i, refl, h}$ and $N_{i, WN, h}$ remain constant as well under varying $h$.

By the proof in \cite{Anasov1883}, for any period ray $\gamma_h$ associated to metric $g_{c_h}$ whose period is $\leq k+1$, is non-degenerate, and belongs to $\mathcal G_{k+1}$, then $[\gamma_h]$ must coincide with one of 
\[
[\gamma_{1, h}], [\gamma_{2, h}], \dots, [\gamma_{N, h}].
\]

Hence, if $g_c$ is not in $E_{k+1}$, then there exists $T, \tilde T \in \blsp(c)$, such that $T(p) = \tilde T(\tilde p) $ with length $\leq k+1$, 
they are non-degenerate, and belong to $\mathcal G_{k+1}$. 
We will show that under a slight perturbation of $c$, $T(p)$ becomes simple while keeping all the simple periods of $g_c$ simple under perturbation. Without loss of generality, we may assume these two periods correspond to the ray $\gamma_1$ and $\gamma_2$, and the other periods $T_3, T_4 \dots, T_N$ are simple (if other periods were non-simple, we can just repeat the following argument until all are simple). 

Under a perturbation of $c$, denoted $c_h$ with parameter $h$, the perturbed periods $T_{3,h}, \dots, T_{N, h}$ remain simple for $h$ small enough due to continuity of $h \mapsto T_{i, h}$ (in fact, differentiability by \cite{HIKRigidity}), the openness of $E_k$, and since these are all the periods $\leq k+1$.

Thus, we need to show that we may pick a particular perturbation so that $T_{1,h}$ becomes simple and the proof of density is complete. The perturbed travel time is
\[
T_h = T_1(p) + h \delta T(p) + O(h^2)
\]
and 
$\tilde T_h =  T_2(p) + h \delta T(\tilde p) + O(h^2)$. 

If we can arrange that $\delta T(p) \neq \delta T (\tilde p)$, then $T_h \neq \tilde T_h$ for small enough $h$. Note that for a fixed $\mathbf a' \in \RR^4$, consider the analytic perturbation $\mathbf a_h = \mathbf a + h \mathbf a '$. By openness, $c_h \in E_k$ for small enough $h$. Since we have an explicit formula for $T_h$ and $\tilde T_h$, we can compute explicitly $\frac{d}{dh}\restriction_{h= 0 } T_h
$
and so too for $\tilde T_h$, and we can pick $\mathbf a'$ to guarantee that $\delta T$ and $\delta \tilde T$ are different. Thus $T_h(p_h) \neq \tilde T_h(\tilde p_h) $. This establishes the required density.

The above argument readily applies to the case of multiple interfaces since it only relies on having a discrete length spectrum away from a fixed distance away from the gliding periods, and that the quantities $N_{n, \text{refl}}$ and $N_{n, \text{WN}}$ become constant for large $n$ when a family of periodic rays converges to another ray. Thus, the only issue is nonbasic periodic rays that have a gliding leg. However, if we restrict our class of metrics such that
\[
a' + b' \ln(\mathbf{f}) < a + b \ln(\mathbf{f}),
\]
then such nonbasic rays with a gliding leg do not exist. Hence, the only accumulation points in the length spectrum lie in $\mathcal{G}_\delta$, and we do not need to recover such periods for the proof.

On the other hand, if one wanted to consider such nonbasic rays with one or more gliding legs, then these would not create a cancellation in the wave trace, even if their period coincided with the period of a ray in a disjoint equivalence class. This is because the proof of the trace formula shows that the term in the wave trace corresponding to such a ray is smoother than the terms corresponding to non-gliding rays.

\subsection{Simplicity of length spectrum of a Euclidean ball}\label{a: ball case}

We give a concrete example of a simple basic length spectrum, the Euclidean disc.
The basic length spectrum is simple and the full length spectrum is almost simple; the only coincidence is that going twice around a hexagon takes as long as going three times back and forth along a diameter.

The proof relies heavily on number theory.
The nature of the proof in the geometrically simplest case suggests that finding any concrete examples in non-Euclidean geometry will be very difficult.
This is why genericity arguments like the one presented above are necessary.

\begin{theorem}
\label{thm:disc-lsp-example}
Let $\gamma_1$ and $\gamma_2$ be two periodic broken rays in the Euclidean unit disc.
Suppose that the rays are basic and not rotations of each other.
Then their lengths are different.

In fact, the ratio of their lengths is rational if and only if $\gamma_1$ is a hexagon (length $6$) and $\gamma_2$ is a diameter (length $4$, counted back and forth).
\end{theorem}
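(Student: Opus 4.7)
\textbf{Parametrization.} The first step is to enumerate primitive basic periodic rays in the Euclidean unit disc up to rotation. Since the disc has a single interface (its boundary), a basic ray of type 1 must be a closed billiard orbit reflecting off the boundary at constant angle, hence a regular star polygon $\{N/k\}$ with $N\ge 2$, $1\le k\le N/2$, and $\gcd(N,k)=1$; a basic ray of type 2 is the diameter traversed back and forth, which corresponds to $(N,k)=(2,1)$. Each chord of the $(N,k)$ orbit has Euclidean length $2\sin(\pi k/N)$, so the total length is $\ell(N,k)=2N\sin(\pi k/N)$. In particular, the diameter back-and-forth has length $4$ and the hexagon has length $6$. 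Two such rays are rotations of each other iff their normalized parameters coincide.

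\textbf{Reduction to a trigonometric identity.} For distinct parameters $(N_1,k_1)\ne(N_2,k_2)$, the length ratio is
\[
\frac{\ell(N_1,k_1)}{\ell(N_2,k_2)}
=\frac{N_1}{N_2}\cdot\frac{\sin(\pi k_1/N_1)}{\sin(\pi k_2/N_2)}.
\]
Since $N_1/N_2\in\mathbb Q$, the ratio is rational exactly when $\sin(\pi k_1/N_1)/\sin(\pi k_2/N_2)\in\mathbb Q$. Everything therefore hinges on the following number-theoretic lemma: for $\alpha,\beta\in\mathbb Q\cap(0,1/2]$ in lowest terms,
\[
\frac{\sin(\pi\alpha)}{\sin(\pi\beta)}\in\mathbb Q
\quad\Longleftrightarrow\quad
\alpha=\beta\ \text{or}\ \{\alpha,\beta\}=\{1/6,1/2\}.
\]
The reverse direction is immediate from $\sin(\pi/6)/\sin(\pi/2)=1/2$, which also yields the claimed exceptional ratio $6/4=3/2$.

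\textbf{Proof of the lemma via cyclotomic Galois theory.} Suppose $\sin(\pi\alpha)=r\sin(\pi\beta)$ with $r\in\mathbb Q^\times$. If $\sin(\pi\beta)$ happens to be rational, Niven's theorem (the only rational values of $\sin$ at rational multiples of $\pi$ are $0,\pm1/2,\pm1$) forces $\beta\in\{1/6,1/2\}$, and then the same theorem applied to $\sin(\pi\alpha)=r\sin(\pi\beta)\in\mathbb Q$ forces $\alpha\in\{1/6,1/2\}$, leading to the listed cases. In the generic case where $\sin(\pi\beta)$ is irrational, write $\alpha=p/L$, $\beta=q/L$ with $L=\operatorname{lcm}(N_1,N_2)$ and set $\zeta=e^{i\pi/L}$, a primitive $2L$-th root of unity. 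The identity rewrites as the vanishing $\mathbb Q$-linear combination of four roots of unity
\[
\zeta^{p}-\zeta^{-p}-r\zeta^{q}+r\zeta^{-q}=0.
\]
Applying the Galois automorphisms $\sigma_t\colon\zeta\mapsto\zeta^t$ for $t\in(\mathbb Z/2L)^\times$ produces the same identity with $(p,q)$ replaced by $(tp,tq)$, hence $\sin(\pi\alpha)\sin(\pi t\beta)=\sin(\pi\beta)\sin(\pi t\alpha)$ for every such $t$. Expanding via the product-to-sum formula turns each of these into a vanishing $\mathbb Z$-combination of eight $2L$-th roots of unity, and the Conway--Jones classification of minimal vanishing sums of roots of unity (together with Niven to clean up small-support residues) shows that no such relation can hold for all Galois twists $t$ unless either $\alpha\equiv\pm\beta\pmod{1}$ (forcing $\alpha=\beta$ in our range) or the residual relation collapses to the hexagon/diameter identity $2\sin(\pi/6)=\sin(\pi/2)$.

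\textbf{Main obstacle.} Steps 1 and 2 are routine geometry; the real work lies in the number-theoretic lemma of Step 3. While Niven's theorem and the Conway--Jones classification are classical, the combinatorics of reducing the Galois-twisted family of four-term relations to one of the two permitted shapes requires careful bookkeeping, especially to rule out sporadic coincidences involving $L$ with many small prime factors. This is the step that is genuinely delicate and that explains why, as the authors remark, even the simplest non-trivial geometry already demands substantial number theory.
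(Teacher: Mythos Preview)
Your reduction in Steps 1 and 2 is correct and matches the paper exactly: both arrive at the lemma that for distinct $\alpha,\beta\in\mathbb{Q}\cap(0,\tfrac12]$ one has $\sin(\pi\alpha)/\sin(\pi\beta)\in\mathbb{Q}$ only in the exceptional case $\{\alpha,\beta\}=\{1/6,1/2\}$.

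The proof of the lemma is where you diverge from the paper. You propose to rewrite the relation as a four-term (and after Galois-twisting and product-to-sum, an eight-term) vanishing sum of roots of unity and invoke the Conway--Jones classification. This is a legitimate strategy, but as you yourself flag, the combinatorial case analysis is left open, and that is precisely the heart of the matter. The paper avoids this entirely by a cleaner field-theoretic argument: from $\sin(\pi\alpha)/\sin(\pi\beta)\in\mathbb{Q}$ one gets $\mathbb{Q}(\cos\pi\alpha)=\mathbb{Q}(\cos\pi\beta)$, hence $\mathbb{R}\cap\mathbb{Q}(\zeta_{q_1})=\mathbb{R}\cap\mathbb{Q}(\zeta_{q_2})$ where $q_i$ are the denominators. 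A short totient-function lemma then forces either $q_1=q_2$ or $q_1=2q_2$ with $q_2$ odd (the small cases $q_i\le 4$ or $=6$ having been set aside by Niven). Finally, within a fixed real cyclotomic field the conjugate cosines $\cos(2\pi p/q)$ with $\gcd(p,q)=1$ form a $\mathbb{Q}$-basis, so two distinct ones cannot be rational multiples of each other.

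In short: your outline is sound, but what you call the ``main obstacle'' is where all the work lies, and the paper's degree-and-basis argument disposes of it without any Conway--Jones-style case enumeration. If you want to complete your route, you would need to carry out the eight-term vanishing-sum classification in full; the paper's method is both shorter and self-contained.
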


The proof is easy once we establish the following proposition.

\begin{proposition}
\label{prop:sine-ratio-irrat}
If $r_i$ for $i=1,2$ are two different rational numbers in $(0,\frac14]$, then
\begin{equation}
\frac{\sin(2r_1\pi)}{\sin(2r_2\pi)}
\notin
\QQ
\end{equation}
unless $r_1=\frac{1}{4}$ and $r_2=\frac{1}{12}$ or vice versa.
\end{proposition}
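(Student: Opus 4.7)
The plan is to pass from sines to cosines and then separate the problem into a ``small denominator'' case handled by Niven's theorem and a ``large denominator'' case handled via the Galois theory of cyclotomic fields. Setting $\theta_i = 2\pi r_i$ and assuming for contradiction that $\sin\theta_1/\sin\theta_2 = \lambda \in \Q_{>0}$, squaring and applying $2\sin^2\theta = 1 - \cos 2\theta$ converts the hypothesis into the rational linear relation
\begin{equation*}
\cos(4\pi r_1) - \lambda^2 \cos(4\pi r_2) = 1 - \lambda^2. \tag{$\star$}
\end{equation*}

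First consider the case when both $\cos(4\pi r_i) \in \Q$. By Niven's theorem the only rational values of $\cos\theta$ for $\theta/\pi \in \Q$ are $0,\pm\tfrac12,\pm 1$, and since $4\pi r \in (0,\pi]$ this pins $r$ to the finite set $\{1/12,1/8,1/6,1/4\}$. Enumerating the six unordered pairs and using $\sin(\pi/6)=1/2$, $\sin(\pi/4)=\sqrt{2}/2$, $\sin(\pi/3)=\sqrt{3}/2$, $\sin(\pi/2)=1$, a direct computation shows that only $\{r_1,r_2\}=\{1/12,1/4\}$ produces a rational sine ratio, namely $1/2$; this is precisely the claimed exception.

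In the remaining case, at least one (and hence, by $(\star)$, both) of $\cos(4\pi r_i)$ is irrational. Write $2r_i = k_i/d$ in lowest common terms; irrationality excludes $d\in\{1,2,3,4,6\}$, while $r_i \le 1/4$ together with $\gcd(k_i,d)=1$ and $d>2$ forces $k_i \le (d-1)/2$. The identity $(\star)$ lives in the real cyclotomic field $\Q(\zeta_d+\zeta_d^{-1})$, so applying each Galois automorphism $\sigma_a\colon \zeta_d\mapsto \zeta_d^a$ for $a\in(\Z/d)^\times$ shows that the Laurent polynomial
\begin{equation*}
P(u) = u^{k_1} + u^{-k_1} - \lambda^2(u^{k_2}+u^{-k_2}) - 2(1-\lambda^2)
\end{equation*}
vanishes at every primitive $d$-th root of unity. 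Thus $\Phi_d$ divides the genuine polynomial $u^{k_1}P(u)$, whose degree is at most $d-1$ and whose support has at most five exponents. A case analysis on the possible overlaps among $\pm k_1, \pm k_2, 0$, combined with the degree bound, then forces $P \equiv 0$, which requires $\{k_1,-k_1\}=\{k_2,-k_2\}$ and $\lambda^2=1$, so $k_1=k_2$ and $r_1=r_2$; this contradicts the hypothesis $r_1 \neq r_2$.

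The main obstacle is the final step: making the ``divisibility plus sparsity'' argument watertight across all admissible $(d,k_1,k_2,\lambda)$. A pure degree count is borderline when $2k_1 \approx \phi(d)$ (for instance $d=7,k_1=3$ or $d=8,k_1=3$), and the several possible overlaps among the exponents produce a handful of subcases to inspect by hand. A cleaner alternative is to invoke the Conway--Jones classification of minimal vanishing sums of roots of unity directly on the equivalent four-term relation $q(\zeta^{k_1}-\zeta^{-k_1}) = p(\zeta^{k_2}-\zeta^{-k_2})$ with $p/q = \lambda$; that classification isolates precisely the sporadic identity $2(\zeta_{12}-\zeta_{12}^{-1}) = \zeta_4 - \zeta_4^{-1}$ underlying the exceptional pair $\{1/12, 1/4\}$, and rules out all others.
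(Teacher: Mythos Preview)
Your approach is genuinely different from the paper's, and the Niven case split is correct and efficient. The argument for the irrational case, however, has a real gap that is larger than you indicate.

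First a technical point: writing $2r_i = k_i/d$ with a \emph{common} $d$ and asserting $\gcd(k_i,d)=1$ for both $i$ is not generally possible --- if $2r_1 = 1/5$ and $2r_2 = 1/7$ then the least common denominator is $d=35$ with $k_1=7$, $k_2=5$, neither coprime to $d$. This is not fatal (the relation $P(\zeta_d^a)=0$ for $(a,d)=1$ still holds), but the bound $k_i\le(d-1)/2$ then needs rejustification.

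The substantive gap is the step from $\Phi_d \mid u^{k_1}P(u)$ plus five-term sparsity to $P\equiv 0$. The degree comparison is far from borderline: for highly composite $d$ one has $\phi(d)\ll d$, so the quotient $u^{k_1}P(u)/\Phi_d$ can have large degree, and cyclotomic polynomials admit very sparse multiples (e.g.\ $(u-1)\Phi_p(u)=u^p-1$ has two terms). You would have to exploit the specific coefficient pattern $(1,-\lambda^2,-2(1-\lambda^2),-\lambda^2,1)$, and that is not a routine check. Your Conway--Jones fallback is correct in principle and, applied directly to the four-term relation $q(\zeta^{c_1}-\zeta^{-c_1})=p(\zeta^{c_2}-\zeta^{-c_2})$ \emph{without} squaring, would both work and avoid the angle-doubling; but you have not carried it out.

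The paper sidesteps this combinatorics. Rather than squaring, it substitutes $s_i=\tfrac14-r_i$ so that $\sin(2\pi r_i)=\cos(2\pi s_i)$, and observes that a rational ratio forces the field equality $\QQ(\cos 2\pi s_1)=\QQ(\cos 2\pi s_2)$. A short lemma (comparing $\phi$-values) classifies when two real cyclotomic fields $\RR\cap\QQ(\zeta_a)$ and $\RR\cap\QQ(\zeta_b)$ coincide: essentially $a=b$, or one is twice the other (odd), or both lie in $\{1,2,3,4,6\}$. With the denominators thus pinned down, the paper invokes the fact that the $\phi(q)/2$ cosines $\cos(2\pi p/q)$ with $1\le p<q/2$, $\gcd(p,q)=1$ form a $\QQ$-basis of $\RR\cap\QQ(\zeta_q)$, hence are $\QQ$-linearly independent; any rational ratio between two distinct ones is then immediately excluded. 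This field-and-basis decomposition is the clean replacement for the sparsity argument you are reaching for.
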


\begin{proof}[Proof of theorem~\ref{thm:disc-lsp-example}]
Each $\gamma_i$ is composed of line segments with an opening angle $\alpha_i$ as seen from the origin.
The condition for periodicity is that $\alpha_i\in\pi\QQ$, and in Euclidean geometry $\alpha\in(0,\pi]$.
We thus have $\alpha_i=2\pi\cdot\hat r_i$ with $\hat r_i=p_i/q_i\in(0,\frac12]\cap\QQ$.
The length of one line segment is $l_i=2\sin(\alpha_i/2)=2\sin(\hat r_i\pi)$ and the length of the whole ray is $L_i=q_il_i$.

The claim is that $q_1l_1\neq q_2l_2$.
In most cases this follows from proposition~\ref{prop:sine-ratio-irrat}.
The only case not covered by it is when $\hat r_1=\frac12$ and $\hat r_2=\frac16$ or vice versa.
These correspond to a diameter (with $q_1=2$ and $L_1=4$) and a hexagon (with $q_1=6$ and $L_1=6$).
In this one case the ratio is rational, but still $L_1\neq L_2$, completing the proof of the main claim and the additional claim.
\end{proof}

To prove the proposition we will use some notation and concepts from number theory without explanation in this section to keep exposition concise.
In what follows, $\zeta_k=e^{2\pi i/k}$ is a $k$th root of unity, $\QQ(\zeta_k)$ is the $k$th cyclotomic field, $\phi$ is Euler's totient function, $\gcd(a,b)$ is the greatest common divisor of $a$ and $b$, and the degree of a field extension of an element thereof is the degree of its minimal polynomial over the subfield.

\begin{lemma}
\label{lma:phi-product}
Let $a$ and $b$ be integers with $a>1$ and $b>1$.
Then $\phi(ab)=\phi(b)$ if and only if $b$ is odd and $a=2$.
\end{lemma}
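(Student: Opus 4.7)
The plan is to exploit the multiplicativity of $\phi$ and the explicit formula $\phi(p^k)=p^{k-1}(p-1)$ on prime powers, then compare $\phi(ab)$ and $\phi(b)$ factor by factor across the primes.

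First I would handle the easy direction. If $b$ is odd and $a=2$, then $\gcd(a,b)=1$, so multiplicativity of $\phi$ gives $\phi(ab)=\phi(2)\phi(b)=\phi(b)$. So only the forward direction requires work.

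For the forward direction, write the prime factorizations $a=\prod_p p^{\alpha_p}$ and $b=\prod_p p^{\beta_p}$ where the product runs over all primes (with $\alpha_p,\beta_p\ge 0$ and only finitely many nonzero). Then $ab=\prod_p p^{\alpha_p+\beta_p}$, and the formula $\phi(p^k)=p^{k-1}(p-1)$ for $k\ge 1$ (with $\phi(p^0)=1$) lets me write
\begin{equation}
\frac{\phi(ab)}{\phi(b)}
=
\prod_{p\mid a,\ p\nmid b} p^{\alpha_p-1}(p-1)
\ \cdot\
\prod_{p\mid\gcd(a,b)} p^{\alpha_p}.
\end{equation}
The key observation is that every factor on the right is at least $1$. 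Specifically, each factor of the first product equals $p^{\alpha_p-1}(p-1)\ge 1$ with equality only when $p=2$ and $\alpha_p=1$; and each factor of the second product is $p^{\alpha_p}\ge 2$ whenever it appears.

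The conclusion is now almost immediate. If $\phi(ab)=\phi(b)$, then the right-hand side above must equal $1$, which forces the second product to be empty (hence $\gcd(a,b)=1$) and forces every prime $p\mid a$ to satisfy $p=2$ and $\alpha_p=1$, hence $a=2$. Combined with $\gcd(a,b)=1$, this means $b$ is odd. The main (and only mild) obstacle is the bookkeeping to make sure no factor is undercounted when $p\mid\gcd(a,b)$; the remedy is to group primes into the three disjoint classes "divides $a$ only", "divides $b$ only", and "divides both", which is exactly what the display above does.
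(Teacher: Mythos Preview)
Your proof is correct and follows essentially the same approach as the paper: both compare $\phi(ab)$ and $\phi(b)$ prime by prime, observing that each local ratio is at least $1$ with equality only when the prime is $2$ with exponent $1$ in $a$ and absent from $b$. The paper phrases this via an auxiliary function $f_i(k)$ per prime and studies when $f_i(l_i)=f_i(k_i+l_i)$, while you compute the ratio $\phi(ab)/\phi(b)$ directly, but the arithmetic content is identical.
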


\begin{proof}%[Proof of lemma~\ref{lma:phi-product}]
Let $p_1,\dots,p_m$ be the primes appearing in the prime factorizations of $a$ and $b$, so that
\begin{equation}
a=\prod_{i=1}^m p_i^{k_i}
\text{ and }
b=\prod_{i=1}^m p_i^{l_i}
,
\end{equation}
where some of the powers $k_i$ and $l_i$ may be zero.

If we define the functions $f_i\colon\N\to\N$ by
\begin{equation}
f_i(k)
=
\begin{cases}
1 & \text{when }k=0 \\
p_i^k-p_i^{k-1} & \text{when }k>0
,
\end{cases}
\end{equation}
then
\begin{equation}
\phi(b)=\prod_{i=1}^m f_i(l_i)
\text{ and }
\phi(ab)=\prod_{i=1}^m f_i(k_i+l_i)
.
\end{equation}
Since each $f_i$ is increasing, we have $f_i(l_i)\leq f_i(k_i+l_i)$ for all $i$ and so $\phi(b)\leq\phi(ab)$.

The equality $\phi(b)=\phi(ab)$ is only possible when $f_i(l_i)=f_i(k_i+l_i)$ for all $i$.
A quick inspection reveals that $f_i$ is strictly increasing with one exception:
$f_i(0)=f_i(1)$ if $p_i=2$.

We assumed $a>1$, so at least one $k_i$ is non-zero.
But $f_i(l_i)=f_i(k_i+l_i)$ for $k_i>0$ is only possible when $l_i=0$ and $k_i=1$ and $p_i=2$.
Therefore $\phi(b)=\phi(ab)$ implies that $a=2$ and $b$ is odd.
\end{proof}

\begin{lemma}
\label{lma:real-cyclotomic}
Let $a$ and $b$ be positive integers.
The following are equivalent:
\begin{enumerate}
\item $\RR\cap\QQ(\zeta_a)=\RR\cap\QQ(\zeta_b)$.
\item Any of the following holds:
\begin{enumerate}
    \item $a=b$.
    \item $a=2b$ and $b$ is odd.
    \item $b=2a$ and $a$ is odd.
    \item $a$ and $b$ are both in the set $\{1,2,3,4,6\}$.
\end{enumerate}
\end{enumerate}
\end{lemma}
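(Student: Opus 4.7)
The plan is to reduce the question to a divisibility computation involving Euler's totient via the classical intersection formula for cyclotomic fields, combined with Lemma~\ref{lma:phi-product}. I would first fix the notation $K_n:=\RR\cap\QQ(\zeta_n)=\QQ(\zeta_n+\zeta_n^{-1})$; as the fixed field of complex conjugation in $\operatorname{Gal}(\QQ(\zeta_n)/\QQ)\cong(\ZZ/n\ZZ)^\times$ it has degree $\phi(n)/2$ over $\QQ$ for $n\geq 3$, while $K_1=K_2=\QQ$. This yields the characterization $K_n=\QQ$ iff $\phi(n)\leq 2$ iff $n\in\{1,2,3,4,6\}$.

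For the direction $(2)\Rightarrow(1)$ I would handle the cases in turn: (a) is immediate; (d) follows from the preceding characterization; for (b), when $b$ is odd the identity $-\zeta_b^{(b+1)/2}=\zeta_{2b}$ shows $\QQ(\zeta_b)=\QQ(\zeta_{2b})$ and hence $K_b=K_{2b}$, with (c) symmetric.

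The core direction is $(1)\Rightarrow(2)$. Assume $K_a=K_b$; if this common field is $\QQ$ I am in case (d). Otherwise I set $d:=\gcd(a,b)$ and invoke the classical fact $\QQ(\zeta_a)\cap\QQ(\zeta_b)=\QQ(\zeta_d)$: since $K_a$ is totally real,
\[
K_a \;\subseteq\; \QQ(\zeta_a)\cap\QQ(\zeta_b)\cap\RR \;=\; \QQ(\zeta_d)\cap\RR \;=\; K_d.
\]
Because $K_a\neq\QQ$, necessarily $K_d\neq\QQ$, so $d\notin\{1,2,3,4,6\}$ and $[K_d:\QQ]=\phi(d)/2$. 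Degree comparison gives $\phi(a)\leq\phi(d)$, while the reduction-modulo-$d$ surjection $(\ZZ/a\ZZ)^\times\twoheadrightarrow(\ZZ/d\ZZ)^\times$ (from $d\mid a$) yields $\phi(d)\mid\phi(a)$ and in particular $\phi(d)\leq\phi(a)$. Hence $\phi(a)=\phi(d)$; writing $a=qd$ and applying Lemma~\ref{lma:phi-product} forces $q=1$ or ($q=2$ with $d$ odd), so $a\in\{d,2d\}$. The identical argument applied to $b$ gives $b\in\{d,2d\}$ (with $d$ odd when $b=2d$). The combination $a=b=2d$ would contradict $\gcd(a,b)=d$, so only $(a,b)\in\{(d,d),(d,2d),(2d,d)\}$ remain, yielding exactly cases (a), (b), (c).

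The main obstacle is conceptual rather than computational: I rely on the intersection identity $\QQ(\zeta_a)\cap\QQ(\zeta_b)=\QQ(\zeta_{\gcd(a,b)})$, which I would quote as a black box from the standard Galois theory of cyclotomic extensions over $\QQ$. Once that is in place the argument is a clean divisibility-and-totient calculation sealed off by Lemma~\ref{lma:phi-product}.
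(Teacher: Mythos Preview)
Your argument is correct and follows essentially the same route as the paper: reduce via the intersection identity $\QQ(\zeta_a)\cap\QQ(\zeta_b)=\QQ(\zeta_{\gcd(a,b)})$ to a comparison of $\phi(a)$, $\phi(b)$, and $\phi(d)$, then invoke Lemma~\ref{lma:phi-product}. The only cosmetic difference is that the paper assumes without loss of generality $a<b$ and runs the totient argument once (on $b$), recovering $a=g$ from the gcd constraint, whereas you run it symmetrically on both $a$ and $b$; you also supply the easy converse $(2)\Rightarrow(1)$ that the paper omits.
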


\begin{proof}%[Proof of lemma~\ref{lma:real-cyclotomic}]
We only prove ``$\implies$'' --- the reverse implication ``$\impliedby$'' is simpler and can be easily constructed from the arguments presented here.
Suppose that $\RR\cap\QQ(\zeta_a)=\RR\cap\QQ(\zeta_b)$ and, without loss of generality, that $a<b$.

Intersections of cyclotomic fields satisfy
\begin{equation}
\QQ(\zeta_a)\cap\QQ(\zeta_b)
=
\QQ(\zeta_{\gcd(a,b)})
,
\end{equation}
so the assumption implies also that $\RR\cap\QQ(\zeta_a)=\RR\cap\QQ(\zeta_b)=\RR\cap\QQ(\zeta_g)$, where we have denoted $g=\gcd(a,b)$ for short.
The assumption $a<b$ implies $g<b$.

Since $\zeta_1=1$ and $\zeta_2=-1$, we have simply $\QQ(\zeta_1)=\QQ(\zeta_2)=\QQ$.
When $n\geq3$, the number $\zeta_n$ is not real and its degree is $\phi(n)$ and the minimal polynomial is the $n$th cyclotomic polynomial.
The degree of the real number $z_n\coloneqq\frac12(\zeta_n+\overline{\zeta_n})$ is $\phi(n)/2$ as shown in~\cite{WZ:deg-of-trig-numbers}.
Because $\RR\cap\QQ(\zeta_n)=\QQ(z_n)$, our assumption implies that the degrees of $z_a$ and $z_b$ and $z_g$ agree.

Let us split the analysis in different cases, using the fact that $\phi^{-1}(1)=\{1,2\}$ and $\phi^{-1}(2)=\{3,4,6\}$:
\begin{enumerate}
\item $b=5$ or $b\geq7$:
In this case $\phi(b)>2$ and so $\deg(z_b)>1$.
Thus also $\deg(z_g)>1$, which implies that $g=5$ or $g\geq7$.
Therefore the formula $\deg(z_n)=\phi(n)/2$ is valid for $n=b,g$ and we find that $\phi(g)=\phi(b)$.
Now that $g\vert b$ and $g<b$, lemma~\ref{lma:phi-product} gives that $b=2g$ and $b$ is odd.
The only possible value for $a$ is $g$.

\item $b\in\{2,3,4,6\}$:
In this case $\deg(z_b)=1$ and so $\RR\cap\QQ(\zeta_b)=\QQ$.
The set of values of $a$ for which $\deg(z_a)=1$ is $\{1,2,3,4,6\}$.
\end{enumerate}
This is exactly the claim.
\end{proof}

\begin{proof}[Proof of Proposition~\ref{prop:sine-ratio-irrat}]
We switch from the parameter $r\in(0,\frac14]$ to $s=\frac14-r\in[0,\frac14)$ so that $\sin(2r\pi)=\cos(2s\pi)\eqqcolon c_s$.

Suppose $c_{s_1}/c_{s_2}\in\QQ$.
This implies that $c_{s_1}\in\QQ(c_{s_2})$ and so $\QQ(c_{s_1})\subset\QQ(c_{s_2})$.
The same argument works in reverse, so $\QQ(c_{s_1})=\QQ(c_{s_2})$.

Consider first the case when $s_i\neq0$.
If $s_i=p_i/q_i$ with $\gcd(p_1,q_i)=1$, let us denote $\hat s_i\coloneqq 1/q_i$.
The assumption $s_i<\frac14$ implies that $q_i\geq5$.

By~\cite{WZ:deg-of-trig-numbers} the algebraic numbers $c_{s_i}$ and $c_{\hat s_i}$ have the same minimal polynomial.
Therefore
\begin{equation}
\RR\cap\QQ(\zeta_{q_1})
=
\QQ(c_{\hat s_1})
=
\QQ(c_{s_1})
=
\QQ(c_{s_2})
=
\QQ(c_{\hat s_2})
=
\RR\cap\QQ(\zeta_{q_2})
.
\end{equation}
By lemma~\ref{lma:real-cyclotomic} this implies that one of the following holds:
\begin{enumerate}
\item $q_1=q_2$.
\item $q_1=2q_2$ and $q_2$ is odd.
\item $q_2=2q_1$ and $q_1$ is odd.
\end{enumerate}
The other cases of the lemma are ruled out by the property $q_i\geq5$, and the third case listed above can be ignored by symmetry.

Let $P_q$ be the set of numbers $p$ with $1\leq p<q/2$ and $\gcd(p,q)=1$.
We have $\#P_q=\phi(q)/2$.
The cosines $c_{p/q}$ with $p\in P_q$ share the same minimal polynomial and they form a basis for $\QQ(c_{1/q})$ over $\QQ$.
They are therefore linearly independent over $\QQ$.

Consider first the case $q_1=q_2\eqqcolon q$.
The numerators $p_1$ and $p_2$ are distinct elements of $P_q$ by assumption.
The linear independence of the cosines contradicts $c_{s_1}/c_{s_2}\in\QQ$.

Consider then the case $q_1=2q_2\eqqcolon 2q$ with odd $q$.
Because $p_1$ and $q$ are odd, there is an integer $\tilde p_1$ so that $p_1=q_2-2\tilde p_1$.
It satisfies $\tilde p_1>q/4>p_2$.
The trigonometric identity $\cos(x)=-\cos(\pi-x)$ gives $c_{p_1/q_1}=c_{(q_2-2\tilde p_1)/2q_2}=-c_{\tilde p_1/q_2}$.
Because $\tilde p_1\neq p_2$, the argument of the previous case shows that the ratio of the cosines is irrational, a contradiction.

The only remaining case is $s_i=0$.
Without loss of generality, take $s_1=p/q>0$ (with $\gcd(p,q)=1$ and $q\geq5$) and $s_2=0$.
We have $c_0=1$, so we are led to study when $c_{p/q}\in\QQ$.
Applying lemma~\ref{lma:real-cyclotomic} in the case $a=1$ and $b=q$ gives that $\QQ(c_{p/q})=\QQ$ if and only if $q\in\{1,2,3,4,6\}$.
Due to this and the bound $q\geq5$ the assumption $c_{s_1}/c_{s_2}\in\QQ$ gives $q=6$.
The only possible value left for $p$ is $1$.

We have found that $s_1=\frac{1}{6}$ and $s_2=0$ or vice versa.
This means that $r_1=\frac{1}{4}-\frac{1}{6}=\frac{1}{12}$ and $r_2=\frac{1}{4}-0=\frac{1}{4}$ or vice versa, as claimed.
\end{proof}

\section{Generalized Debye Expansion}\label{a: Generalized Debye}

In this appendix, we will reduce equation \eqref{e: G with p integral} into the form \eqref{eq: wave propagator form N interface in app}, which resembles a wave propagator. We evaluate $D^{-1} U_l(r)U_l(r_0)$ appearing in \eqref{e: G with p integral} in such a way as to relate it to a certain wave propagator analogous to the computation in \cite{HIKRigidity}. However, the methodology will be different and more laborious to account for the multiple scattering created by the interfaces.

\subsection{Single interface case}

For simplicity, we first consider a $2$-layered sphere with an upper layer $\Omega_+$ and a lower layer $\Omega_-$. The general case will follow easily by a recursive argument. The wave speed and density in a layer $\pm$ (region $\pm$) is $c_\pm,\rho_\pm$. We have the upper surface layer $r = \surf$ the inner boundary $r=\CMB$ (where $R = 0$ if we consider the case of a ball) and the interface $r = \refl$.

Suppose $h^{(1)}(r), h^{(2)}(r)$ are two linearly independent solutions to the second order ODE \eqref{eq: equation for U_2} not necessarily satisfying any boundary condition. They implicitly depend on $k$ and $\omega$. Suppose $r>b$ is the $+$ region and $r < b$ is the $-$ region. Write the solutions in the $\pm$ region
\beq \label{e: u_+ and u_- intro}
u_+ = S(h^{(2)}_+(r) + Ah_+^{(1)}(r)), \quad 
u_- = (1+B)(h_-^{(2)}(r) + Ch_-^{(1)}(r)) .
\eeq
For $u_-$, we think of $C$ as being determined by either an inner boundary condition or another transmission condition if there were more layers, and not by the transmission conditions at $r =\refl$, which instead determine $B$. Hence, to emphasize this point, and to make the computation cleaner, denote
\[j(r) =h_-^{(2)}(r) + Ch_-^{(1)}(r).\]
It will also be useful to consider the solutions to the simpler ODE for $V = \mu^{1/2}r U$ with Neumann boundary condition $\p_r V\restriction_{r = \surf, \CMB} = 0$.

We will think of $h_+^{(2)}$ as an incoming wave into the interface $r= b$ and $h_+^{(1)}$ as a scattered wave even though the notation is merely symbolic at this point. Without writing it explicitly, the constants are not functions of $r$ but they do depend on $l$ and $\omega$. When we make the substitution, $k= \omega p$, then we will have $A = A(\omega, p)$, and for the remaining constants as well.
The general interface conditions to leading order asymptotics as $\omega \to \infty$ are (with $d_\pm$ some parameter on each side of the interface; in our particular case, $d_\pm = \mu_\pm(\refl)$)
\begin{align*}
 Sh^{(2)}_+(b) + SAh_+^{(1)}(b) &= B j(\refl) + j(\refl)\\
d_+ Sh^{(2)'}_+(b) + d_+ASh_+^{(1)'}(b) &= d_-Bj'(\refl) + d_-j'(\refl)
\end{align*}
To ease notation, omit the evaluation at $r = \refl$ so we have
\beq
\bmat{ S h_+^{(1)} & -j\\ d_+S h_+^{(1)'} & -d_-j'} \col{A \\ B}
= \col{ j - Sh_+^{(2)}\\ d_-j' - d_+S h^{(2)'}_+}
\eeq
Then
\beq
\col{A \\ B} = \frac{1}{Sjd_+h_+^{(1)'}-Sh_+^{(1)}d_- j'}
\bmat{  -d_-j' & j\\ -d_+Sh_+^{(1)'} & Sh_+^{(1)}}
\col{ j - Sh_+^{(2)}\\ d_-j' - Sd_+ h^{(2)'}_+}
\eeq

%old computation without j
\begin{comment}
To ease notation, omit the evaluation at $r = \refl$ so we have
\beq
\bmat{ S h_+^{(1)} & -h_-^{(2)}\\ c_+S h_+^{(1)'} & -c_-h_-^{(2)'}} \col{A \\ B}
= \col{ h_-^{(1)} - Sh_+^{(2)}\\ c_-h_-^{(1)'} - c_+S h^{(2)'}_+}
\eeq
Then
\beq
\col{A \\ B} = \frac{1}{Sh_-^{(2)}c_+h_+^{(1)'}-Sh_+^{(1)}c_- h^{(2)'}_-}
\bmat{  -c_-h_-^{(2)'} & h_-^{(2)}\\ -c_+Sh_+^{(1)'} & Sh_+^{(1)}}
\col{ h_-^{(1)} - Sh_+^{(2)}\\ c_-h_-^{(1)'} - Sc_+ h^{(2)'}_+}
\eeq
So we have
\[
A = \frac{ -c_-h_-^{(2)'} (h_-^{(1)} - Sh_+^{(2)}) + h_-^{(2)}(c_-h_-^{(1)'} - c_+ Sh^{(2)'}_+)
}
{
Sh_-^{(2)}c_+h_+^{(1)'}-Sh_+^{(1)}c_- h^{(2)'}_-
}
\]
\end{comment}

Thus,
\[
A = \frac{ -d_-j^{'} (j - Sh_+^{(2)}) + j(d_-j^{'} - d_+ Sh^{(2)'}_+)
}
{
jd_+Sh_+^{(1)'}-Sh_+^{(1)}d_- j^{'}
}
=
\frac{
d_-j'h^{(2)}_+ - d_+j h^{(2)'}_+
}{
jd_+h_+^{(1)'}-h_+^{(1)}d_- j^{'}
}
\]
Factor out $jd_-h^{(2)}_+$ in the numerator and
$jd_-h^{(1)}_+$ in the denominator to get
\[
\frac{h^{(2)}_+}{h^{(1)}_+}
\cdot
\frac{
\ln'(j) - (d_+/d_-)\ln' h^{(2)}_+
}
{(d_+/d_-)\ln' h^{(1)}_+ -\ln'(j)
}
\]

Let us use the notation
\begin{align*}
[2+] &=  (d_+/d_-)\ln' h^{(2)}_+\\
[1+] &=  (d_+/d_-)\ln' h^{(1)}_+ \\
[\alpha] &= \ln'(j)
\end{align*}
We then have
\beq \label{e: initially solving for A}
A = -\frac{h^{(2)}_+}{h^{(1)}_+}
\cdot \frac{  [2+]-[\alpha]}{[1+] - [\alpha]}
\eeq

Following \cite[Appendix]{Nussenzveig69}, we solve for reflection and transmission coefficients in terms of the above functions. We write
\[
u_+ = h^{(2)}_+(r)/h^{(2)}_+(b) + R_{++} h^{(1)}_+(r)/h^{(1)}_+(b)
\]
and
\[
u_- = T_{+-}h^{(2)}_-(r)/h^{(2)}_-(b)
\]
Here, $u_+$ and $u_-$ are solutions unrelated to the previous $u_{\pm}$. One should think of them as being defined only locally near an interface. The notation is that $R_{++}$ is reflection from above the interface and $T_{+-}$ is transmission from the upper layer $(+)$ to the lower layer $(-)$.
The transmission conditions give
\begin{align*}
1 + R_{++} &= T_{+-} \\
[2+] + R_{++}[1+] &= T_{+-}[2-]
\end{align*}
Thus
\[
\bmat{1 & -1 \\ [1+]& -[2-]}\col{ R_{++} \\ T_{+-}}
= \col{ -1 \\ -[2+]}
\]
So
\begin{align*}
\col{ R_{++} \\ T_{+-}}
&= \frac{1}{[1+]-[2-]}\bmat{-[2-] & 1 \\ -[1+]& 1}\col{ -1 \\ -[2+]}
\\
&= \frac{1}{[2-]-[1+]}\bmat{-[2-] & 1 \\ -[1+]& 1}\col{ 1 \\ [2+]}
\\
&=\col{\frac{[2+]-[2-]}{[2-]-[1+]} \\ \frac{[2+]-[1+]}{[2-]-[1+]}}
\end{align*}
So
\[
R_{++} = -\frac{[2+]-[2-]}{[1+]-[2-]} \qquad T_{+-} = \frac{[1+]-[2+]}{[1+]-[2-]}
\]

Likewise, we can show that
\beq \label{e: R_-- and T_-+}
R_{--} = -\frac{[1+]-[1-]}{[1+]-[2-]} \qquad T_{-+} = \frac{[1-]-[2-]}{[1+]-[2-]}
\eeq

\subsection*{Debye expansion for A in \eqref{e: initially solving for A}}
Using \eqref{e: initially solving for A} and the formulas for reflection and transmission coefficients, we follow Nussenszweig \cite{Nussenzveig69}(all functions are evaluated at $r =\refl$ without explicitly writing this for readability):
\begin{align*}
\frac{h^{(1)}_+}{h^{(2)}_+} A - &R_{++}\\
&= \frac{[\alpha]-[2+]}{[1+]-[\alpha]} + \frac{[2+]-[2-]}{[1+]-[2-]}
\\
&=\frac{
([1+]-[2-])([\alpha]-[2+]) + ([2+]-[2-])([1+]-[\alpha])
}
{
([1+]-[\alpha])([1+]-[2-])
}
\\
%&=\frac{
%[1+][\alpha]-[1+][2+]-[2-][\alpha]+[2-][2+] + [2+][1+]-[2+][\alpha]-[2-][1+]+[2-][\alpha]
%}
%{
%([1+]-[\alpha])([1+]-[2-])
%}
%\\
&=\frac{
[1+][\alpha]+[2-][2+] -[2+][\alpha]-[2-][1+]
}
{
([1+]-[\alpha])([1+]-[2-])
}
\\
&=
\frac{
-[2+]([\alpha]-[2-]) + [1+]([\alpha]-[2-])
}{
([1+]-[\alpha])([1+]-[2-])
}
\\
&=
\frac{
([1+]-[2+])([\alpha]-[2-])
}{
([1+]-[\alpha])([1+]-[2-])
}
= T_{+-}\frac{
[\alpha]-[2-]
}{
[1+]-[\alpha]
}
\end{align*}

Next, we use
\[
[\alpha] = \frac{ Ch^{(1)'}_- + h^{(2)'}_-}{ Ch^{(1)}_- + h^{(2)}_-}.
\]
After some algebra, we eventually get
\begin{align*}
&T_{+-}\frac{ Ch^{(1)}_-( [1-]-[2-])}
{Ch^{(1)}_-([1+] - [1-]) + h^{(2)}_-([1+]-[2-])}
\\
&=T_{+-}C \frac{h^{(1)}_-( [1-]-[2-])}{h^{(2)}_-([1+]-[2-])}
\frac{ 1}
{1+\frac{Ch^{(1)}_-([1+] - [1-])}{ h^{(2)}_-([1+]-[2-])}}
\\
&= T_{+-}CT_{-+}\frac{h^{(1)}_-}{h^{(2)}_-}\frac{1}{1-C\frac{h^{(1)}_-}{h^{(2)}_-}R_{--}}
\\
&=T_{+-}CT_{-+}\frac{h^{(1)}_-}{h^{(2)}_-}
\sum_{p=0}^\infty \left(C\frac{h^{(1)}_-}{h^{(2)}_-}R_{--} \right)^p.
\end{align*}
Thus, we obtain the formula we wanted
\beq \label{e: formula for A}
A = \frac{h_+^{(2)}}{h^{(1)}_+}R_{++}
+ \frac{h_+^{(2)}}{h^{(1)}_+}T_{+-}CT_{-+}\frac{h^{(1)}_-}{h^{(2)}_-}
\sum_{p=0}^\infty \left(C\frac{h^{(1)}_-}{h^{(2)}_-}R_{--} \right)^p.
\eeq

The formula above has a very intuitive geometric meaning. The first term represents the first reflection from the top layer. The term $T_{+-}T_{-+}$ represents transmission into the next layer and back out. For two interfaces, $C$ will be $1$, but in general, it will be a reflection coefficients defined recursively using the next adjoining layer. The $R_{--}$ represents reflection from below the interface and the $C$ corresponds to a reflection from the next subsequent interface from above which in this case is at $r = \CMB$. The exponent $p$ corresponds to how many such reflections occur before the wave transmits back to the upper layer. We will see later that terms such as $\frac{h_-^{(1)}}{h_-^{(2)}}$ correspond to travel times of each such interaction between two adjacent hypersurfaces (each being either a boundary or an interface).

\subsection*{Evaluating $1/D$}

To proceed, we will use the asymptotic solutions to the ODE where the prefix $n$ means it satisfies the Neumann boundary conditions. Asymptotically, we must distinguish the various regimes for the different types of rays that may occur: reflecting, turning, grazing, gliding, evanescent, as well as combinations of these.
\begin{itemize}
\item \textbf{Reflecting} ($0< p < \CMB/c(\CMB)$):
We use the linearly independent solutions to the ODE in the reflecting regime of the form  (see \cite[Appendix A]{HIKRigidity})
\begin{align*}\label{e: h_+,h_- for reflecting}
h^{(2)}_{+,n} &=
        \mu_+^{-1/2}r^{-1}\wkb_+^{-1/2}\exp\left( \ii \omega_n
       \int_{\refl}^r \wkb_+ \dd r' +\ii \delta_+/2\right)
\\
h^{(1)}_{+,n} &= \mu_+^{-1/2}r^{-1}\wkb_+^{-1/2}\exp\left( -\ii \omega_n
       \int_{\refl}^r \wkb_+ \dd r' -\ii \delta_+/2\right),
       \\
h^{(2)}_{-,n} &= 
        \mu_+^{-1/2}r^{-1}\wkb_+^{-1/2}\exp\left( \ii \omega_n
       \int_{R^*}^r \wkb_- \dd r' +\ii \delta_-/2\right)
\\
h^{(1)}_{-,n} &= \mu_-^{-1/2}r^{-1}\wkb_-^{-1/2}\exp\left( -\ii \omega_n
       \int_{R^*}^r \wkb_- \dd r' -\ii \delta_-/2\right),
\end{align*}
where $\delta_\pm$ is a function depending on $p$ that keeps track of phase changes for when a ray turns, and $R^*$ is the radius of the ray. For the reflecting regime where the ray never turns, then $\delta_\pm = 0$ and $R^* = \CMB$. For a general eigenfunction that does not necessarily satisfy the boundary conditions, we remove the subscript $n$ from the above definitions.

It is useful to note that in this reflecting regime, the transmission coefficients are independent of the frequency $\omega$ to leading order.
Indeed, notice
\[
[2+] = \ln ' h_+^{(2)} = \ii \omega \wkb_+
+ \ln'(\mu_+^{-1/2}r^{-1}\wkb_+^{-1/2})
\]
with similar formulas for the other terms $[2-], [1+],[1-]$. Then for $R_{++}$ (say), the $\omega$ in the first term above cancels from the numerator and denominator in the formula for $R_{++}$ so we have
\[
R_{++} = F(r) + O(1/\omega)
\]
where $F$ is independent of $\omega$. Analogous results hold for the remaining reflection and transmission coefficients so we conclude
\begin{lemma}\label{l: R/T independent of omega}
To leading order as $\omega \to \infty$, the reflection and transmission coefficients are independent of $\omega$.
\end{lemma}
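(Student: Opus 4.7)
The plan is to compute each of $R_{++}, R_{--}, T_{+-}, T_{-+}$ to leading order in $\omega$ by substituting the explicit WKB forms of $h^{(1)}_{\pm}, h^{(2)}_{\pm}$ into the bracket symbols $[k\pm]$ and observing that the $O(\omega)$ terms cancel between numerator and denominator of each quotient.

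First, I would write out $[k\pm]$ directly. Using the WKB ansatz displayed just above the lemma in the reflecting regime,
\[
\ln' h^{(2)}_+ = \ii\omega\wkb_+ + \ln'\!\bigl(\mu_+^{-1/2} r^{-1} \wkb_+^{-1/2}\bigr),
\qquad
\ln' h^{(1)}_+ = -\ii\omega\wkb_+ + \ln'\!\bigl(\mu_+^{-1/2} r^{-1} \wkb_+^{-1/2}\bigr),
\]
and analogously in the $-$ layer (with $R^*$ replaced appropriately and no factor $\mu_+/\mu_-$ multiplying). Evaluating at $r=\refl$ and using the conventions $[k+]=(d_+/d_-)\ln' h^{(k)}_+$ and $[k-]=\ln' h^{(k)}_-$, we obtain
\[
[k\pm] = \pm \ii\omega\,\kappa_\pm + \eta_\pm(r,p) + O(\omega^{-1})
\]
for some coefficients $\kappa_\pm = (\mu_\pm/\mu_-)\wkb_\pm$ (with the $+$ sign $k=2$ and the $-$ sign $k=1$) and some $\omega$-independent remainders $\eta_\pm$ that depend only on the $C^{1,1}$ background quantities $\mu, \rho, c, r$.

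Second, I would substitute these expansions into the formulas
\[
R_{++} = -\frac{[2+]-[2-]}{[1+]-[2-]},\qquad
T_{+-} = \frac{[1+]-[2+]}{[1+]-[2-]},
\]
and the analogous ones in \eqref{e: R_-- and T_-+}. In every case both numerator and denominator are of order $\omega$, so the ratio has a well-defined limit as $\omega\to\infty$: for instance,
\[
R_{++} = \frac{\mu_+\wkb_+ - \mu_-\wkb_-}{\mu_+\wkb_+ + \mu_-\wkb_-} + O(\omega^{-1}),
\]
evaluated at $r=\refl$, and similarly for the other three coefficients. The leading term depends only on $\mu, \wkb$ and is manifestly $\omega$-independent, which is the claim.

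There is no real obstacle in the reflecting regime beyond organized bookkeeping; the only subtlety is that the same argument must be carried out for the other regimes in which WKB solutions take a different form (turning, grazing, gliding, and evanescent rays, all mentioned in the preceding paragraph). In each non-reflecting regime one either has the same oscillatory ansatz with a phase correction $\delta_\pm(p)$ that is independent of $\omega$ (so the cancellation argument is unchanged), or one uses an Airy-type ansatz near a turning or grazing point where the principal symbol of the logarithmic derivatives still scales linearly in $\omega$. Hence the same cancellation of the leading $\omega$ factors in the numerator and denominator goes through, and in all regimes the reflection and transmission coefficients are $\omega$-independent to leading order. The evanescent regime can be handled by analytic continuation, replacing $\ii\omega\wkb$ by $-\omega\abs{\wkb}$, which again only affects the common scalar factor that cancels.
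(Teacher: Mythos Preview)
Your proof is correct and follows essentially the same approach as the paper: compute the logarithmic derivatives $[k\pm]$ from the WKB ansatz, observe that each is of the form $\pm\ii\omega\kappa_\pm$ plus an $\omega$-independent remainder, and note that the common factor of $\omega$ cancels in the quotient formulas for $R_{++},T_{+-},R_{--},T_{-+}$. The paper's argument is terser (it does not write out the explicit limiting value you compute for $R_{++}$) and is stated only in the reflecting regime, treating the other regimes separately later; your added discussion of turning, grazing, and evanescent cases is extra but not inconsistent with the paper.
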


At the outer boundary $r = \surf$, when identifying $U_n$ with its principal term in the WKB expansion, the Neumann condition is $\p_r U_n(\surf) = 0$, which gives when $\omega \to \infty$
\begin{equation}
\exp\left( \ii \omega_n
       \int_{\refl}^\surf \wkb_+ \dd r' +\ii \delta_+/2\right)
       -A \exp\left( -\ii \omega_n
       \int_{\refl}^\surf \wkb_+ \dd r' - \ii \delta_+/2\right)
       =0
\end{equation}

Then
\begin{align*}
(1/S)U_n(\surf) &=  \mu_+^{-1/2}r_s^{-1}\wkb_+^{-1/2} \exp\left( \ii {}_n\omega_l
       \int_{\refl}^\surf \wkb_+ \dd r' +\ii \delta_+/2 \right)
       \\&\qquad + A \mu_+^{-1/2}r_s^{-1}\wkb_+^{-1/2} \exp\left( -\ii {}_n\omega_l
       \int_{\refl}^\surf \wkb_+ \dd r' -\ii \delta_+/2 \right)
       \\
       &=2 \mu_+^{-1/2}r_s^{-1}\wkb_+^{-1/2}\exp(i{}_n\omega_l \tau(\surf)+\ii \delta_+/2).
\end{align*}

Recall that for the calculation of $D$ that we need in \eqref{e: hat G with general efunctions}, we replace the above $\omega_n$ by a general $\omega$ due to the contour integration and residue formula.
Then for $\omega \to \infty$
\begin{align*}
(1/S^2)&U_n(\surf) T(\surf) \\
&= (1/S^2)\mu_+(\surf) U_n \tfrac{d}{dr} U \\
&= 2\mu_+(\surf)\wkb_+^{-1}\mu^{-1}_+(\surf)\exp(i\omega \tau(\surf)+\ii \delta_+/2)  \\
&\qquad \qquad \cdot[\exp(i\omega \tau(\surf) + \ii \delta_+/2)
       - A \exp(-\ii\omega \tau(\surf)-\ii \delta_+/2)
       ]\\
&=
2\wkb_+^{-1}\exp(2i\omega \tau(\surf)+\ii \delta_+)
(1-A \exp(-2\ii\omega \tau(\surf)-\ii \delta_+))
\\
&=
2\wkb_+^{-1} \frac{h^{(2)}_+(\surf)}{h^{(1)}_+(\surf)}
\left(1 - A \frac{h^{(1)}_+(\surf)}{h^{(2)}_+(\surf)}\right)
\end{align*}

We finally obtain the expression for $r,r' > \refl$
\begin{align*}
D^{-1}U_l(r)U_l(r') = \frac{(h^{(2)}_+(r) + Ah_+^{(1)}(r))(h^{(2)}_+(r') + Ah_+^{(1)}(r'))}{2\wkb_+^{-1}(\surf)  \frac{h^{(2)}_+(\surf)}{h^{(1)}_+(\surf)}
\left(1 - A \frac{h^{(1)}_+(\surf)}{h^{(2)}_+(\surf)}\right)}
\end{align*}

It is convenient to set
\[
f(r,r'):=\wkb^{-1/2}(r)\wkb^{-1/2}(r')r^{-1}(r')^{-1}\mu_+^{-1/2}(r)\mu_+^{-1/2}(r')
\]
and let us label
\beq \label{e: Phi_+}
\Phi_+= \Phi_+(\omega,p) =  \int_{\refl}^\surf \wkb_+ \dd r' +\delta_+/(2\omega)
\eeq
and
\beq \label{e: Phi_-}
\Phi_-= \Phi_-(\omega,p) =  \int_{R^*}^\refl \wkb_- \dd r' +\delta_-/(2\omega),
\eeq
where $R^*$ is the turning radius of the ray as in \cite{HIKRigidity}, and for the reflecting regime, $R^* = \CMB$.
 Now $h^{(2)}_+(\surf)/h^{(1)}_+(\surf) = \exp(\ii 2\omega\Phi_+)$.
Observe that in the formula for $A$, $\frac{h^{(2)}_+}{h_+^{(1)}} = 1$.
We thus have $D^{-1}U_l(r)U_l(r')$ is equal to
\begin{align*}
&E \wkb_+(\surf)f(r,r')/2\exp\left[\ii \left(\omega\int_{\refl}^r \wkb_+ \dd r +
\omega\int_{\refl}^{r'} \wkb_+ \dd r + \delta_+ -2\omega\Phi_+\right)\right]
\\
&+E \wkb_+(\surf)f(r,r')/2\exp\left[\ii \left(\omega\int_{r'}^r \wkb_+ \dd r \right)\right]
\\
&+E \wkb_+(\surf)f(r,r')/2\exp\left[\ii \left(\omega\int_{r}^{r'} \wkb_+ \dd r \right)\right]
\\
&+
E A\wkb_+(\surf)f(r,r')/2\exp\left[\ii \left(-\omega\int_{\refl}^r \wkb_+ \dd r -
\omega\int_{\refl}^{r'} \wkb_+ \dd r - \delta_+ \right)\right],
\end{align*}
where
\begin{equation}\label{e: definition of E}
E = \left(1 - A \frac{h^{(1)}_+(\surf)}{h^{(2)}_+(\surf)}\right)^{-1}
= \sum_{l_0=0}^{\infty} A^{l_0} \exp(-2\ii l_0\Phi_+).
\end{equation}
The first term in the sum has no $A$ in the coefficient since it represents going from $r$ to $r'$ via $r=\surf$ with no interface interaction. The next two terms correspond to a direct ray from source and receiver located at radius $r$ and $r'$. The fourth term represent a path from $r$ to $r'$ via an interface interaction.

Next,
\[
A \frac{h^{(1)}_+(\surf)}{h^{(2)}_+(\surf)}= \exp(-2\ii \Phi_+) R_{++}
+ \frac{T_{+-}CT_{-+}\exp(-2\ii(\Phi_+ + \Phi_-))}
{1 - \exp(-2\ii\Phi_-) C R_{--}}
\]
To ease notation, note that terms like $\exp(-2\ii \Phi_\pm)$ correspond to 2-way radial travel times between two interfaces (one of which could be a boundary). So denote
\begin{align*}
\tilde R_{++} &= \exp(-2\ii \Phi_+) R_{++}
\qquad \qquad \  \tilde R_{--} = \exp(-2\ii \Phi_-) R_{--}\\
\qquad \tilde T_{+-} &=\exp(-\ii(\Phi_+ + \Phi_-))T_{+-}
\qquad \tilde T_{-+} =\exp(-\ii(\Phi_+ + \Phi_-))T_{-+}
\end{align*}

We then have
\begin{equation}\label{eq: A^l with tilde R and tilde T}
\left(A \frac{h^{(1)}_+(\surf)}{h^{(2)}_+(\surf)}\right)^{l_0}
= \sum_{l_1=0}^{l_0}\binom{l_0}{l_1}\tilde R^{l_0-l_1}_{++} \frac{(\tilde T_{+-}C\tilde T_{-+})^{l_1}}{(1-C\tilde R_{--})^{l_1}}
\end{equation}
Next observe that for a positive integer $q$
\[
\sum_{k=0}^\infty \binom{q+k-1}{k}z^k = \frac{1}{(1-z)^q}
\]

Hence, the above sum becomes
\beq \label{eq: sum with tilde R and tilde T}
\sum_{l_1=0}^l
\sum_{l_2=0}^\infty
\binom{l_0}{l_1}\binom{l_1+l_2-1}{l_2}
\tilde R^{l_0-l_1}_{++} (\tilde T_{+-}C\tilde T_{-+})^{l_1}(C\tilde R_{--})^{l_2}
\eeq
The boundary condition at $r = \CMB$ forces $C=1$ here.
We have shown that
\begin{align} \label{Debye expansion of E}
E &= \sum_{l_0=0}^\infty\sum_{l_1=0}^{l_0}
\sum_{l_2=0}^\infty
\binom{l_0}{l_1}\binom{l_1+l_2-1}{l_2}
\tilde R^{l_0-l_1}_{++} (\tilde T_{+-}C\tilde T_{-+})^{l_1}(C\tilde R_{--})^{l_2} \nonumber\\
&=\sum_{(m_0,m_1,m_2) \in \mathbb N^3}
n_{(m_0,m_1,m_2)}\exp(-2\ii \omega (m_0 \Phi_+ + m_1(\Phi_+ + \Phi_-)
+m_2 \Phi_-)) \cdot \\
&\qquad \qquad \qquad \qquad \qquad \qquad R_{++}^{m_0} (T_{+-}T_{-+})^{m_1}R_{--}^{m_2},
\end{align}
where $n_{(m_0,m_1,m_2)}$ is a combinatorial coefficient.
\begin{remark}\label{rem: Hron connection}
We note that the combinatorial coefficients in the above formula have a special physical meaning. In \cite{HronCriteria}, the author describes that in multilayered media, multiple waves travel different paths but arrive with the same travel times (kinematic analogs). Some of these can also have the same amplitude and phase characteristics (dynamic analogs). Hence, due to multiple scattering, there may be multiple waves with the same principal amplitude and travel time. If the corresponding ray is periodic, then all of these rays make a contribution to the trace and they are accounted for by the above combinatorial coefficient on the number of dynamic analogs for a particular ray. See \cite[Figure 2]{HronCriteria} for examples of these dynamic analogs. The coefficients in the above formula agree with the simple counting argument in \cite{HronCriteria} for counting the number of dynamic analogs.
\end{remark}

\subsection*{Radial travel times and amplitudes}Let us do the purely reflecting case first with no turning points since that is easier to index.
Based on the above calculations, we want a convenient indexing to represent radial travel times and amplitudes of each wave constituent in the sum, and can be unified when we study the other regimes.

For $M=(m_0,m_1,m_2) \in \mathbb{Z}_{\geq 0}^3$ let
\[
\Phi_M = 2m_0\Phi_+ + 2m_1(\Phi_+ + \Phi_-) + 2m_2 \Phi_-
\]
\begin{eqnarray*}
   \tau_{M,1}(r,r_0;p) &=&
         \int_{r_0}^r \wkb(r';p) \dd r'  + \Phi_M
\\
   \tau_{M,2}(r,r_0;p) &=&
        \int_{\refl}^{r_0} \wkb(r';p) \dd r'
      + \int_{\refl}^r \wkb(r';p) \dd r'  + \Phi_M,
\\
   \tau_{M,3}(r,r_0;p) &=&
        \int_{r_0}^{\surf}
                          \wkb(r';p) \dd r'
      + \int_r^{\surf}
                          \wkb(r';p) \dd r' + \Phi_M,
\\
   \tau_{M,4}(r,r_0;p) &=&
      - \int_{r_0}^r \wkb(r';p) \dd r'  +\Phi_M ,
\end{eqnarray*}

Now we have corresponding amplitudes:
\begin{eqnarray}
Q_{M,1} &=& R_{++}^{m_0}(T_{+-}CT_{-+})^{m_1}R^{m_2}_{--} \\ \\
\label{e: defining Q_M,i}
Q_{M,2} &=& AR_{++}^{m_0}(T_{+-}CT_{-+})^{m_1}R^{m_2}_{--}\\ \\
Q_{M,3} &=& R_{++}^{m_0}(T_{+-}CT_{-+})^{m_1}R^{m_2}_{--}\\ \\
Q_{M,4} &=& R_{++}^{m_0}(T_{+-}CT_{-+})^{m_1}R^{m_2}_{--}
\end{eqnarray}
Actually, we have to index more carefully since the amplitudes involving $A$ and $A^2$ above are not merely amplitudes but contain important phase information as well as seen in \eqref{eq: A^l with tilde R and tilde T}. Since this does not affect the main argument and only makes the index cumbersome, we opt not to do this.

Also, we note here that by forcing $C=1$, this enforces the inner Neumann boundary condition to leading order even on these generic solutions $U_k$. We choose to leave a generic $C$ in the formula since it will be needed for the case of multiple interfaces.

\subsection*{Substituting the Debye expansion for $\hat G$ in the 2
interfaces case}

We will now insert the Debye expansion into the formula for $\hat G(r,r_0,\omega)$ in \eqref{e: G with p integral}.

First, we insert the leading order expansion (valid for $\text{Re}\ p >0$),
\[
   Q_{\omega p - 1/2}^{(1)}(\cos \Theta) \simeq
     \left( \frac{1}{2\pi \omega p \sin \Theta} \right)^{1/2}
              e^{-\ii (\omega p \Theta - \pi/4)}
\]
to obtain, assuming $r,r_0 > \refl$ (analogous formulas hold for $r > \refl, r_0 < \refl$ or $r,r_0 < \refl$ or $r<\refl, r_0>\refl$)
\joonas{Repeated label. See the other copy of this footnote.}
\begin{multline}\label{eq: wave propagator form 2 interface}
    \frac{1}{4\pi}\ (-)^{(s-1)/2}
   (rr_0c^{(+)}(r)c^{(+)}_0(r))^{-1}(\rho^{(+)}(r)\rho^{(+)}(r_0))^{-1/2}
   \\
   \cdot \int_{-\infty}^{\infty}
   (\wkb_+(r;p)\wkb_+(r_0;p))^{-1/2}
\\
\cdot \left[ \mstrut{0.7cm} \right. \sum_{M=(m_0,m_1,m_2) \in \mathbb{N}^3}n_M\sum_{i=1}^{4}
          \exp\left[-\ii \omega \tau_{M,i}(r,r_0;p)
   + \ii N_{M,i} \frac{\pi}{2}\right] \left. \mstrut{0.7cm} Q_{M,i}(p)\right]
\\
   \quad\quad\quad\quad
          \cdot Q_{\omega p - 1/2}^{(1)}(\cos \Theta)
          e^{-\ii \omega(s-1) p \pi} \, p^{-1} \dd p
          \\
   \simeq \frac{1}{4\pi} (-)^{(s-1)/2}
       (r r_0 c^{(+)}(r) c^{(+)}(r_0))^{-1}
       (2\pi \rho^{(+)}(r) \rho^{(+)}(r_0) \sin \Theta)^{-1/2}
\hspace*{3.0cm}
\\
   \int (\wkb_+(r;p) \wkb_+(r_0;p))^{-1/2}
\\
\sum_{M=(m_0,m_1,m_2) \in \mathbb{N}^3}n_M\sum_{i=1}^{4}
           \exp[-\ii \omega (\tau_{M,i}(r,r_0;p) + p \Theta + (s-1) p \pi)]Q_{M,i}(p)
\\
   \exp[\ii (\pi/4) (2 N_{M,i} - 1)] (\omega p)^{-3/2} \dd p .
\end{multline}

The other regimes require only slight modifications to the computation above so we will be briefer on these since the notation can be unified to produce the above formula as well.
\item
\textbf{Total internal reflection} ($ \refl/c_-(\refl)< p < \refl/c_+(\refl)$):
In this case, we have a reflection from the interface with no transmission, which corresponds to an evanescent wave in $\Omega_-$.

Here $V_-$ will be evanescent with the form
\[
T \abs{\wkb_-}^{-1/2} \exp\left(-\omega \int_r^\refl \abs{\wkb_-} \dd r  \right)
\]
The reflection coefficients are computed identically except that in this case, $h_-^{(2)} = \exp\left(-\omega \int_r^\refl \abs{\wkb_-} \dd r  \right)$ so that the reflection and transmission coefficients are now complex valued.

The remaining formulas follow as before but are simpler since there is no propagation in the lower layer. Thus, we have the same formulas with $A$ replaced by $R_{++}$.
With $r,r' > \refl$, we have
\begin{align*}
D^{-1}&U_l(r) U_l(r')
=\\
&E \wkb_+(\surf)f(r,r')/2\exp\left[\ii \left(\omega\int_{\refl}^r \wkb_+ \dd r +
\omega\int_{\refl}^{r'} \wkb_+ \dd r + \delta_+ -2\Phi_+\right)\right]
\\
&+E \wkb_+(\surf)f(r,r')/2\exp\left[\ii \left(\omega\int_{r'}^r \wkb_+ \dd r \right)\right]
\\
&+E \wkb_+(\surf)f(r,r')/2\exp\left[\ii \left(\omega\int_{r}^{r'} \wkb_+ \dd r \right)\right]
\\
&+
E R_{++}\wkb_+(\surf)f(r,r')/2\exp\left[\ii \left(-\omega\int_{\refl}^r \wkb_+ \dd r -
\omega\int_{\refl}^{r'} \wkb_+ \dd r - \delta_+ \right)\right],
\end{align*}
where
\[
E = \left(1 - R_{++} \frac{h^{(1)}_+(\surf)}{h^{(2)}_+(\surf)}\right)^{-1}
= \sum_{l_0=0}^{\infty} R_{++}^{l_0} \exp(-2\ii l_0\Phi_+)
\]
and $\delta_+ = 0$ since there is no phase shift.

\item
\textbf{Diving} ($\refl/
c_+(\refl) < p < \surf/c_+(\surf)$ or $\CMB/
c_-(\CMB) < p < \refl /
c_-(\refl)$):
There are two possible cases, either the turning point is in the first layer (in this case, there will not be any reflections and transmissions so the analysis reduces to that of \cite{HIKRigidity}) or the turning point is in the second layer, which requires further analysis than that of \cite{HIKRigidity}.
In the latter case, the rays in the first layer transmit into the second layer but then turn rather than reflect  from $r = \CMB$.

 We summarize
the WKB solution of~\eqref{e: ode for U} in the vicinity of a general
turning point. A turning point, $r = \rturn$, is determined by
\[
   \wkb_-^2(\rturn) = 0 .
\]
Near a turning point, $r \approx \rturn$, and
\[
   \wkb_-^2(r) \simeq \tConst (r - \rturn),
\]
for an $\tConst$ determined by a Taylor expansion.
Away from a turning point,
\[
   \wkb^2_- > 0\text{ if $r \gg \rturn$} ,\quad
   \wkb^2_- < 0\text{ if $r \ll \rturn$} .
\]
Matching asymptotic solutions yields
\begin{equation}
   %B \left\{ \ba{lcl}
   %\displaystyle{
   %\abs{\wkb}^{-1/2} \exp\left(-\omega
   %\int_r^{\rturn} \abs{\wkb} \, \dd r\right)
   %},
         %&& r \ll \rturn
%\\[0.5cm]
   %2 \pi^{1/2} \alpha_0^{-1/6} \omega^{1/6}
         %\mathrm{Ai}(- \omega^{2/3} \alpha_0^{1/3} (r - \rturn)),
         %&& r \simeq \rturn
%\\[0.25cm]
   %\displaystyle{
   %2 \wkb^{-1/2} \cos \left(-\omega \int_{\rturn}^r \wkb \, \dd r
                              %- \pi/4 \right)
   %},
         %&& r \gg \rturn.
   %\ea\right.
%END OLD VERSION
B
\begin{cases}
\abs{\wkb_-}^{-1/2} \exp\left(-\omega
\int_r^{\rturn} \abs{\wkb_-} \, \dd r\right)
,
& r \ll \rturn
\\[.5em]
2 \pi^{1/2} \tConst^{-1/6} \omega^{1/6}
\mathrm{Ai}(- \omega^{2/3} \tConst^{1/3} (r - \rturn)),
& r \simeq \rturn
\\[.5em]
2 \wkb_-^{-1/2} \cos \left(-\omega \int_{\rturn}^r \wkb_- \, \dd r
   - \pi/4 \right)
,
& r \gg \rturn
.
\end{cases}
\end{equation}
From these one can obtain a uniform expansion, that is, the Langer
approximation
\begin{equation}\label{eq: Langer}
\begin{split}
%\hspace*{0.75cm}
   V_-(r,\omega;p) &= 2 \pi^{1/2} \chi^{1/6} (-\wkb_-^2)^{-1/4}
                   \mathrm{Ai}(\chi^{2/3}(r)) ,\\
   \chi(r) &= -(3/2) \omega \int_{\rturn}^r (-\wkb_-^2)^{1/2} \dd r ,
\end{split}
\end{equation}
valid for $r \in [\CMB,\surf]$. One obtains eigenfunctions
corresponding with turning rays.

Up to leading order, where
$r \gg \rturn$,
\begin{align}
   V_- &= 2 B \wkb_-^{-1/2}
       \cos \left(\omega \int_{\rturn}^r \wkb_- \, \dd r'
                              - \pi/4 \right),
\\
   \partial_r V_- &= -2 \omega B \wkb_-^{1/2}
       \sin \left(\omega \int_{\rturn}^r \wkb_- \, \dd r'
                              - \pi/4 \right).
\end{align}

\subsection{Gliding and grazing cases}
Recall that $u_+ = S(A_{+,2,r} + A A_{+, 1, r})$
If $u_+$ is an eigenfunction, then the surface boundary condition $u_+'(\surf)= 0$ determines the eigenvalues so
\[
u_{n,+}(r) = h^{(2)}_+(r)-\frac{h^{(2)'}_+(\surf)}{h^{(1)'}_+(\surf)} h^{(1)}_+(r)
\]
and 
\[
u_{n,+}(\surf) =
\frac{1}{h_+^{(1)'}(\surf)}W(h_+^{(1)}, h^{(2)}_+).
\]
where $W(\cdot, \cdot)$ is the Wronskian. We then obtain
\[
U_+T
= \frac{h^{(2)'}_+(\surf)}{h^{(1)'}_+(\surf)}W(h_+^{(1)}, h^{(2)}_+)
\left(1+ A \frac{h^{(1)'}_+(\surf)}{h^{(2)'}_+(\surf)} \right)
\]

Notice that $V_-$ is asymptotically $0$ near $r = \CMB$ and so the inner boundary condition is satisfied automatically.

With the above representation and following the ansantz in \eqref{e: u_+ and u_- intro}, one uses $C = 1$, and after expressing the cosine term in $V_-$ in terms of complex exponentials, we use for $h^{(1)}_-$ and $h^{(2)}_-$
\[
\exp\left(\ii \omega \int_{\rturn}^r \wkb_- \, \dd r'
                              - \ii\pi/4 \right), \qquad \exp\left(-\ii\omega \int_{\rturn}^r \wkb_- \, \dd r'
                              + \ii \pi/4\right)
.\]
Next, in this case there we have from \eqref{e: Phi_+} and \eqref{e: Phi_-}
that $\delta_+ = 0$ (since the first layer has no turning points) while $\delta_- = -\pi/2$ due to the turning point phase shift as in \cite{HIKRigidity}. The coefficent $A$ and $1/E$ are computed exactly as in the reflecting case except we have the extra $\pi/2$ phase shift coming from terms involving $V_-$. Hence, in the formula for $\hat G$, we have a contribution of $\pi/2$ to the KMAH index for each turning point in the ray path. More precisely, we will have $N_{M,i} = m_1+m_2+l_i$ where $l_i$ depends on the number of turning points. Hence, our earlier computation goes through where the KMAH index is the only difference.

One can see the phase shift analytically for each turning point along the ray in the formula for $A$. For example, a ray starting in the first layer that transmits into the second layer, if it transmits back to the first layer, the $\pi/2$ phase shift is accounted for in the terms $\tilde T_{-+} \tilde T_{+-}$ in \eqref{eq: sum with tilde R and tilde T}. If it reflects from below the interface, the $\tilde R_{--}$ will have the $\pi/2$ shift.\\

\item
\textbf{Reflection with gliding transmission} ($ p =\refl/c_-(\refl)$ and $\refl/c_-(\refl)< \refl/c_+(\refl)$ :

This is a case where a ray hits the interface at a critical angle so that there is a reflection but the transmitted ray begins tangent to the interface, and then propagates along the interface; inside $\Omega_-$, we are in the evanescent regime. 
In this case, $V_+$ is the same as before.
Here we can also use
\[
V_-(r, \omega; p) = 2T\pi^{1/2} \chi^{1/6}(-\wkb_-^2)^{-1/4}\text{Ai}(\chi^{2/3}(r)),
\]
while $V_+$ will be reflective
\begin{multline}
V_+ = S(h_+^{(2)}(r) + A h^{(1)}_+(r))
\\
= S\left( \wkb^{-1/2}_+(r)\exp\left( \ii \omega_n
       \int_{\refl}^r \wkb^{-1/2}_+ \dd r' \right)
       + A \wkb^{-1/2}_+(r)\exp\left( -\ii \omega_n
       \int_{\refl}^r \wkb_+ \dd r' \right)\right)
\end{multline}
from before.

Near $r = \refl$, we use the asymptotic formula $V_- \simeq 2T\pi^{1/2}q_0^{-1/6} \omega^{1/6} \mathrm{Ai}(-\omega^{2/3}(r - \refl))$
Let $S_1 = 2\pi^{1/2}q_0^{-1/6}\mathrm{Ai}(0)$ and $S_2 = 2\pi^{1/2}q_0^{1/6}\mathrm{Ai}'(0)$. The transmission conditions (for $V$) then become with $d_\pm = \mu_\pm^{1/2}(b)$
\begin{align*}
d_+^{-1}S\wkb_+^{-1/2}(1+A) &= d_-^{-1}\omega^{1/6}S_1T \\
d_+S\ii \omega \wkb_+^{1/2} (1-A) &= -\omega^{5/6} d_-S_2T
\end{align*}

So then
\[
\bmat{ d_+^{-1}S\wkb_+^{-1/2} & -d_-^{-1}\omega^{1/6}S_1 \\ -\omega d_+ \ii S \beta_+^{1/2} & \omega^{5/6}d_-S_2} \col{A \\ T} = \col{-d_+^{-1}S\wkb_+^{-1/2} \\ -d_+S\ii \omega \wkb_+^{1/2} \omega}
\]

We have
\[
\col{A \\ T} = \frac{1}{d} \bmat{ \omega^{5/6}d_-S_2 & d_-^{-1}\omega^{1/6}S_1 \\ \omega d_+ \ii S \beta_+^{1/2} & d_+^{-1}S\wkb_+^{-1/2} } \col{-d_+^{-1}S\wkb_+^{-1/2} \\ -d_+S\ii \omega \wkb_+^{1/2} \omega}
\]
where $d = \omega^{5/6}d_- d_+^{-1}\wkb_+^{-1/2}SS_2 - \omega^{7/6}d_+d_-^{-1}\ii \wkb_+^{1/2} S S_1$. Hence, we have
\[
A = \frac{-\omega^{5/6}d_-d_+^{-1}S_2S\wkb_+^{-1/2}-\omega^{7/6}d_-^{-1}d_+\wkb_+^{1/2}S_1S\ii }{\omega^{5/6}d_- d_+^{-1}\wkb_+^{-1/2}SS_2 - \omega^{7/6}d_+d_-^{-1}\ii \wkb_+^{1/2} S S_1    } \to 1 \text{ as } \omega \to \infty.
\]
Similarly, $T \sim O(\omega^{-1/6}) $ as $\omega \to \infty$.

In this case, to leading order we have
\[
E = \left(1 - A \frac{h^{(1)}_+(\surf)}{h^{(2)}_+(\surf)}\right)^{-1}
= \sum_{l_0=0}^{\infty}  \exp(-2\ii l_0\Phi_+).
\]

Note that to leading order, this is analogous to the internally reflected case with no gliding. Hence, to leading order, only the travel time of the reflected ray can be detected and not the gliding portion.

\begin{comment}
Thus, we note that the travel time of the gliding segment will not be in the phase function, which only contains the travel times of reflected portions of the ray. Thus, the phase is \emph{not} stationary at the gliding ray travel time and is smooth at this time, although not in any open neighborhood of $T_g$.
\end{comment}

\item
\textbf{Grazing} ($R^* = \refl \text{ or }\CMB$): The detailed asymptotic analysis can be found in appendix \ref{a: grazing}. For a grazing, turning ray, there are two new possibilities: The turning point is at the discontinuity $r = \refl$ or the turning point is at $r = \CMB$.

\subsubsection*{Turning point at the discontinuity} Now we suppose $R^* = \refl$. Observe that due to the extended Herglotz condition, the lower layer becomes an evanescent regime where no propagation can occur, so using the asymptotic expansion of the Airy function, $V_-$ is exponentially decreasing and so $V_- = O(\omega^{-\infty})$ in $\Omega_-$. The inner boundary condition thus becomes automatically satisfied. Nevertheless, when restricted to the interface, $V_-\restriction_{r = \refl}$ is not $0$ and is determined by the interface conditions.
Hence, the interface conditions have the form
\[
V_+ \restriction_{r= \refl} = f_1 ,\qquad \p_rV_+ \restriction_{r= \refl} = f_2
\]
for some $f_1,f_2$ depending on $p$ and $\omega$.
 Next, note that $\mathrm{Bi}(0) =\sqrt{3} \mathrm{Ai}(0)$.
 Using \eqref{e: V in glanging regime} to represent $V_+$, the interface conditions have the form
 \begin{align*}
 q_0^{-1/6}\omega^{1/6}\textrm{Ai}(0)[B_1 + \sqrt 3 B_2] &= f_1 ,\\
 q_0^{1/6}\omega^{5/6}\textrm{Ai}'(0)[B_1 - \sqrt 3 B_2] &= f_2 .
\end{align*}
Away from the turning point, as in the turning point regime, $V_+$ is a linear combination of $\exp\left(\omega \int_{\rturn}^r \wkb_+ \, \dd r'
                              - \pi/4 \right)$ and $ \exp\left(-\omega \int_{\rturn}^r \wkb_+ \, \dd r'
                              + \pi/4\right)$
 so upon replacing the factor $\sqrt{3}$ appearing in the equation for $B_1$ and $B_2$ with $e^{\ii \pi/6}+e^{-\ii \pi/6}$, this introduces an extra phase shift in the KMAH index, while the remaining calculations are analogous. Also, the formula for $A$ involving $h^{(2)}_+(\surf)/h^{(1)}_+(a)$ will also have this extra phase shift each time the ray turns due to the discontinuity.

 In fact, we can calculate ``reflection/transmission'' coefficients (that is, an analog of them since there is no reflection/transmission in this case)too see what is happening. We use the ansatz
 \[
 V_+(r)\backsimeq
\frac{\mathrm{Ai}(- \omega^{2/3} \tConst^{1/3} (r - \rturn))}{\mathrm{Ai}(0)}
+ R
\frac{\mathrm{Bi}(- \omega^{2/3} \tConst^{1/3} (r - \rturn))}{\mathrm{Bi}(0)}
\]
for some $R$ to be determined. Since $\Omega_-$ is an evanescent regime, we could use the ansatz $V_- (r) = T \exp \left( - \omega \int_r^\refl \wkb_-(r') \dd r' \right)$.
Then the same calculation for $R_{++}, T_{+-}$ earlier would give to leading order as $\omega \to \infty$, $R = 1$ and $T = 0$ as expected since to principal order, there is no transmission to the other side.

We do the explicit computations since it is interesting to see how the discontinuity affects the leading order behavior. There are two calculations to do: Computing $D$ and computing $A$.
We have
\[
V_+ = S2\pi^{1/2}\chi_+^{1/6}(-\wkb_+^2)^{-1/4}(\mathrm{Bi}(\chi_+^{2/3}(r)) + A \mathrm{Ai}(\chi_+^{2/3}(r)))
\]
To satisfy the Neumann condition, we evaluate at $r = \surf \gg \refl$ so we can use the leading order asymptotics of the Airy functions
\[
V_+\simeq S(-\wkb_+^2)^{-1/2}\left(-\sin\left(\omega \int_{\refl}^r \wkb_+ dr + \pi/4\right)+A\cos\left(\omega \int_{\refl}^r \wkb_+ dr + \pi/4\right) \right)
\]
Writing the $\sin$ and $\cos$ terms in terms of complex exponentials and using the functions defined earlier $\Phi_+$ with $\delta = \pi/4$, we have
\[
\simeq S(-\wkb_+^2)^{-1/2}\left(
(i+A)\exp(i\Phi_+) + (-i+A) \exp(-i\Phi)
\right)
\]
The Neumann condition $\p_r V_{n,+}(\surf) = 0$ gives
\[
(i+A)\exp(i\Phi_{n,+}) - (-i+A) \exp(-i\Phi_{n,+}) = 0
\]
where we now use the actual eigenvalue $\omega_n$ and eigenfunction $V_{n,+}$. Thus, we have
\[
V_{+,n}(\surf)\simeq 2S(-\wkb_+^2)^{-1/2}
(i+A)\exp(i\Phi_{n,+})
\]
As before, we replace $\omega_n$ by the general $\omega$ and let $\omega \to \infty$
\begin{align}
(1/S^2)U_n(\surf)T(\surf)
%&= 2\mu_{\surf}(i+A)\exp(i\Phi_{+})\left((i+A)\exp(i\Phi_{+}) - (-i+A) \exp(-i\Phi_{+})\right) \nonumber \\
&=2\mu_{\surf}(i+A)^2\exp(2i\Phi_{+})\left(1 - \frac{-i+A}{i+A} \exp(-2i\Phi_{+})\right).
\label{eq: U_nT for grazing}
\end{align}

Now we must compute $A$ which can be thought of now as the ``reflection'' coefficient from the interface.
Near $r = \refl$, we can use the asymptotic formula
\[
V_+ \simeq2 S\pi^{1/2} q_0^{-1/6} \omega^{1/6}
(\mathrm{Bi}(-\omega^{2/3}q_0^{1/3}(r-\refl))+ A \mathrm{Ai}(-\omega^{2/3}q_0^{1/3}(r-\refl)))
\]
It will be convenient to denote $S_1 = 2S\pi^{1/2} q_0^{-1/6}\mathrm{Ai}(0) $ and $S_2 =-S 2\pi^{1/2} q_0^{1/6}\mathrm{Ai}'(0) \mu_{+,\refl}$.
Below the interface, we use the ansantz $V_- = T \abs{\wkb_-}^{-1/2}\exp\left(-\omega \int_r^\refl \abs{\wkb_-} \dd r\right)$ which satisfies the inner boundary condition to leading order.

Then the transmission conditions at $r= \refl$ are given by
\begin{align*}
\omega^{1/6}S_1(1+\sqrt{3}A) &= \abs{\wkb_-}^{-1/2}T \\
\omega^{5/6}S_2(1-\sqrt{3}A) &= -\omega\abs{\wkb_-}^{-1/2}T
\end{align*}
which leads to the matrix equation
\[
\bmat{ \omega^{1/6}S_1\sqrt{3} & -\abs{\wkb_-}^{-1/2}\\
-\omega^{5/6}S_2 \sqrt{3} & -\omega \abs{\wkb_-}^{1/2}\mu_{-,\refl}
}\col{A \\ T} = \col{-\omega^{1/6} S_1 \\ -\omega^{5/6}S_2}.
\]
\end{itemize}
The determinant is $d = -\omega^{7/6}S_1 \sqrt{3}\abs{\wkb_-}^{1/2}\mu_{-,\refl} - \omega^{5/6} S_2 \abs{\wkb_-}^{-1/2}$ so the solution is
\[
\col{A \\ T} = \frac{1}{d} \bmat{-\omega\abs{\wkb_-}^{1/2}\mu_{-,\refl} & \abs{\wkb_-}^{-1/2} \\ \omega^{5/6}S_2 \sqrt{3} & \omega^{1/6}S_1 \sqrt{3}} \col{-\omega^{1/6}S_1 \\ - \omega^{5/6} S_2}
\]
So we obtain
\[
A = \frac{\omega^{7/6}\abs{\wkb_-}^{1/2}S_1\mu_{-,\refl} - \omega^{5/6}S_2\abs{\wkb_-}^{-1/2}}
{-\omega^{7/6}S_1 \sqrt{3} \abs{\wkb_-}^{1/2}\mu_{-,\refl} - \omega^{5/6}S_2\abs{\wkb_-}^{-1/2}}
\to -\frac{1}{\sqrt{3}}
\]
as $\omega \to \infty$, which modulo a normalization constant, the reflection coefficient is $1$ while $T = 0$ to leading order, as expected.

We can now give a more explicit asymptotic formula for \eqref{eq: U_nT for grazing}. First, note that $(-i+A)/(i+A)$ has modulus $1$ when using $A = -1/\sqrt{3}$ and angle $\arctan{-\sqrt{3}} = -\pi/3$. Hence, $(-i+A)/(i+A) = e^{-\pi/3}$, which to principal order, the extra phase shift the interface creates for the wave. Thus, we obtain
\[
(1/S^2)U_n(\surf)T(\surf)
 =2\mu_{\surf}(4/3)e^{-i2\pi/3}\exp(2i\Phi_{+})\left(1 - e^{-i\pi/3} \exp(-2i\Phi_{+})\right).
\]
Hence, in the earlier formula for $E$, we instead get
\[
E = \sum_{l_0=0}^\infty \exp(-2\ii l_0 \Phi_+)
\]
where in the definition of $\Phi_+$, $\delta_+ = \pi/6$, which is the adjusted phase shift from the turning ray that was $\pi/2$.

\subsubsection*{Turning point at $r = \CMB$}

 It is possible that for certain turning rays, $R^* = \CMB$, in which case the Neumann boundary condition $\p_r V_-\restriction_{r=\CMB=R^*} = 0$ must be satisfied as well. This condition will be satisfied by using the representation \eqref{eq: Langer} near the grazing point and also introducing $\mathrm{Bi}(x)$ in addition to $\mathrm{Ai}(x)$ above so that near $r = R^*$, $V_-$ is a linear combination of $\omega^{1/6}\mathrm{Ai}(- \omega^{2/3} \tConst^{1/3} (r - \rturn))$ and $\omega^{1/6}\mathrm{Bi}(- \omega^{2/3} \tConst^{1/3} (r - \rturn))$.
 So for $r\backsimeq R^*$, we use the ansantz
 \begin{equation}\label{e: V in glanging regime}
V_-(r)\backsimeq C_12 \pi^{1/2} \tConst^{-1/6} \omega^{1/6}[
\mathrm{Bi}(- \omega^{2/3} \tConst^{1/3} (r - \rturn))
+ C_2
\mathrm{Ai}(- \omega^{2/3} \tConst^{1/3} (r - \rturn))]
 \end{equation}
 Then \[
 \p_r V_-(r)\restriction_{r=\CMB} =
- C_12 \pi^{1/2} \tConst^{1/6} \omega^{5/6}[
\mathrm{Bi}'(0)
+ C_2
\mathrm{Ai}'(0)]
 \]
 Setting $\p_r V(r)\restriction_{r=\CMB} = 0$ and using $-\mathrm{Ai}'(0) = \mathrm{Bi}'(0)/\sqrt{3}$ we get $C_2= \sqrt{3}$.

 Now, $V_+$ is the same as the reflecting case, and the coefficient $A$ is computed identically. In the formula, the coefficient $C$ will change to account for the grazing at the boundary. To see this, for $r$ near $\refl$, we may write $V_-$ in terms of complex exponentials exactly as in the previous case where the turning point was at the interface. We obtain for $r$ near $\refl$
 \begin{align*}
 V_- &\simeq
 C_1(-\wkb_-^2)^{-1/2}\left(
(i+C_2)\exp(i\Phi_-) + (-i+C_2) \exp(-i\Phi_-)
\right)\\
&= C_1(i+C_2)(-\wkb_+^2)^{-1/2}\left(
\exp(i\Phi_-) + \frac{-i+C_2}{i+C_2} \exp(-i\Phi_-)
\right) \\
&=C_1(i+C_2)(-\wkb_+^2)^{-1/2}\left(
\exp(i\Phi_-) + e^{-\pi/3} \exp(-i\Phi_-)
\right).
 \end{align*}
Hence, in the formula for $A$ an $E$ in the previous sections, $C$ gets replaced by $e^{-\pi/3}$ which only affects the KMAH index each time the ray turns. Hence, the same computations as in the reflecting case go through with an adjusted KMAH index.

\subsection{Multiple interfaces}

We now consider an $N$-layered sphere whose wavespeed and density are smooth in each layer $j$ denoted $\Omega_j = \{ d_j < r < d_{j+1}\}$. The index $j$ of the layers increases as $j$ increases. We have $N$ discontinuities $r = d_j$, $j=1,\dots, N$ and are also indexed in increasing radius. Hence $r=d_N$ is the surface and $r=d_1$ is the core-mantle boundary. As before, we consider the reflecting regime where there are no turning points. Analysis for the other regime will extend easily from the analysis we did in the two interface case.

\begin{itemize}
\item
\textbf{Reflection } ($ 0< p < \CMB/c(\CMB)$):
We have the reflection coefficients $R_{j+1,j j+1}$ for a wave that reflects from interface $r=d_j$ from above. The first and last index indicate the wave began and ended in layer $\Omega_{j+1}$ while the middle index indicates which reflector the wave hit. Really, it is a function of $\omega$ and $l$. Then we have $T_{j+1,j}$ and $T_{j,j+1}$ as transmission from layer $j+1$ to $j$ and $j$ to $j+1$ respectively. Likewise, reflection from below interface $r=d_j$ is $R_{j, j, j}$. Corresponding to $C$ before, we will label $A_{j-1}$ corresponding to the total amplitude of outgoing waves at interface $r = d_{j-1}$.

Let $U_j = U \restriction_{\Omega_{j}}$. As before, we set
\[
U_{j+1} = S_{j+1}(h^{(2)}_{j+1}(r) + A_j h^{(1)}_{j+1}(r)) ,
\]
\[
U_j = S_j(h^{(2)}_j(r) + A_{j-1}h^{(1)}_j(r)) .
\]
Then the same calculations as before lead to
\[
A_j = R_{j+1,j,j+1} + T_{j+1,j}A_{j-1}T_{j,j+1}\sum_{p=1}^\infty (R_{j,j,j}A_{j-1})^{p-1}.
\]
We denote
\begin{align} Q = (R_{N,N-1,N}, R_{N,N,N},T_{N+1,N},T_{N,N+1}, \dots,
&R_{j+1,j,j+1}, R_{j,j,j},T_{j+1,j},T_{j,j+1},
\\&\dots,R_{2,1,2}, R_{1,1,1},T_{2,1},T_{1,2})
\end{align}
and $M= (m_1, \dots , m_{4(N-1)})$. We then define $Q_M:=Q^M$, that is, as the product of the amplitudes
\beq \label{e: Q_M term}
Q_M=R^{m_1}_{2,1,2} R^{m_2}_{1,1,1}T^{m_3}_{2,1}T^{m_4}_{1,2}  \cdots
R^{m_{4N-7}}_{N,N-1,N} R^{m_{4N-6}}_{N,N,N}T^{m_{4N-5}}_{N+1,N}T^{m_{4N-4}}_{N,N+1}.
\eeq
Note that $Q_M$ depends on $p$ but not $\omega$ using Lemma \ref{l: R/T independent of omega}.
As before, we define $Q_{M,i}$ according to \eqref{e: defining Q_M,i} with $A_i$ replacing $A$ in those formulas. The radial travel times $\tau_{M,i}$ will be constructed analogously using iteration from the two interface case. However, we do have to distinguish the different regimes to obtain the correct KMAH index.

For the radial travel times, define
\[
\Phi_{j}(\omega,p) = \int_{d_{j-1}}^{d_{j}} \wkb_j(r') \dd r' +\delta_j(p)/(2\omega).
\]
Here, $\delta_j$ depends on $p$, since if it is the reflecting regime for $\Omega_j$ (that is, the ray does not turn in $\Omega_j$, then $\delta_j = 0$. If the ray turns in $\Omega_j$ but does not graze, then $\delta_j = \pi/2$. If it grazes, then $\delta_j = \pi/12$.

For $M=(m_1,m_2,\dots,m_{4(N-4)}) \in \mathbb{Z}_{\geq 0}^{4(N-4)}$ let
\beq \label{e: Phi_m radial travel time}
\Phi_M = \sum_{j=1}^{4(N-4)} 2m_j\Phi_j.
\eeq

In such a case, iterating the calculation of \eqref{eq: A^l with tilde R and tilde T} and \eqref{eq: sum with tilde R and tilde T} to obtain $E$ in \eqref{Debye expansion of E} from the single interface case, where $C$ gets replaced by $A_{j-1}$, we will have
\[
E = \sum_{M \in \mathbb N^{4(N-1)}}
n_M Q_M \exp\left(-2\ii \omega \Phi_M \right)
\]
where $n_M$ is a combinatorial constant counting the number of dynamic analogs as in \cite{HronCriteria},

To simplify the notation, we assume that we are considering $U(r),U(r_0)$ in layer $\Omega_N$ and $r_{N-1} = \refl$; the formulas are analogous for the other layers. As before we compute $D^{-1}U_l(r)U_l(r_0)$ as
\begin{align*}
&E \wkb(\surf)f(r,r_0)/2\exp\left[\ii \left(\omega\int_{\refl}^r \wkb \dd r +
\omega\int_{\refl}^{r_0} \wkb \dd r + \delta_N -2\omega\Phi_N\right)\right]
\\
&\ \ + E \wkb(\surf)f(r,r_0)/2\exp\left[\ii \left(\omega\int_{r_0}^r \wkb \dd r \right)\right]
\\
&\ \ + E \wkb(\surf)f(r,r_0)/2\exp\left[\ii \left(\omega\int_{r}^{r_0} \wkb \dd r \right)\right]
\\
&\ \ +
E A_N\wkb(\surf)f(r,r_0)/2\exp\left[\ii \left(-\omega\int_{\refl}^r \wkb \dd r -
\omega\int_{\refl}^{r_0} \wkb \dd r - \delta_N -2\omega\Phi_N\right)\right],
\end{align*}
As before, we denote the radial travel times as
\begin{eqnarray}\label{tau_M formula N interface}
   \tau_{M,1}(r,r_0;p) &=&
         \int_{r_0}^r \wkb(r';p) \dd r'  + \Phi_M ,
\\
   \tau_{M,2}(r,r_0;p) &=&
        \int_{\refl}^{r_0} \wkb(r';p) \dd r'
      + \int_{\refl}^r \wkb(r';p) \dd r'  -2\Phi_N+ \Phi_M ,
\\
   \tau_{M,3}(r,r_0;p) &=&
        \int_{r_0}^{\surf}
                          \wkb(r';p) \dd r'
      + \int_r^{\surf}
                          \wkb(r';p) \dd r' + \Phi_M ,
\\
   \tau_{M,4}(r,r_0;p) &=&
      - \int_{r_0}^r \wkb(r';p) \dd r' +\Phi_M ,
\\
   \tau_M(p) &=& \Phi_M
\end{eqnarray}
and corresponding amplitudes by
\begin{eqnarray} \label{e: Q_Mi terms}
Q_{M,1} &=& Q_M , \\
\label{e: defining Q_M,i again}
Q_{M,2} &=& A_NQ_M ,\\
Q_{M,3} &=& Q_M ,\\
Q_{M,4} &=& Q_M .
\end{eqnarray}
As before, we would have to expand $A_N$ with a Neumann series, with each term contributing to the phase. However, the main form of the final formula does not change and so we opt not to do this in order to simplify the indexing.

\item
\textbf{Turning/gliding/grazing/total internal reflection} ($ \CMB/c(\CMB)\leq p < \surf /c(\surf)$):
There exists a minimal $l$ such that $d_{l}/c(d_{l})\leq p <d_{l+1}/c(d_{l+1})$.
For the other regimes let $R^*$ be the turning radius of the deepest ray, and it depends on $p$. It is possible that the ray internally reflects at an interface or grazes it, in which case $R^* = r_{d_l}$. Due to Herglotz, all layers above $r_{d_{l+1}}$ are reflecting and we can repeat the analysis above to compute $U_N,U_{N-1},\cdots, U_{l+3}$. Note that $\overline \Omega_{l+1}$ is where the ray turns, grazes, or internally reflects, so the region $r_{d_{j}} \leq r \leq r_{d_{j+2}}$ can be analyzed using the single interface case from before. To compute $U_{l+2},U_{l+1}$, we repeat the calculation of the single interfaces case to determine $A_{l+1}$ and $A_l$, where $A_l$ plays the role of $C$ in the two interface case and $A_{l+1}$ replaces $A$ in that case.

\end{itemize}

In all cases, we can unify the expressions so that to leading order
 we compute $\hat G(r,r_0,\Theta,\omega)$, assuming $r,r_0 > d_{N-1}$ (analogous formulas hold for the other intervals)
\begin{multline}\label{eq: wave propagator form N interface in app}
   \simeq \frac{1}{4\pi} (-)^{(s-1)/2}
       (r r_0 c^{(N)}(r) c^{(N)}(r_0))^{-1}
       (2\pi \rho^{(N)}(r) \rho^{(N)}(r_0) \sin \Theta)^{-1/2}
\hspace*{3.0cm}
\\
   \int (\wkb_N(r;p) \wkb_N(r_0;p))^{-1/2}
   \sum_{M \in \mathbb{N}^{4(N-1)}}n_M
   \\
   \cdot \sum_{i=1}^{4}
           \exp[-\ii \omega (\tau_{M,i}(r,r_0;p) + p \Theta + (s-1) p \pi)]Q_{M,i}
\\
   \exp[\ii (\pi/4) (2 N_{M,i} - 1)] (\omega p)^{-3/2} \dd p .
\end{multline}
It is important to note that $N_{M,i}$ depends on $p$, relating to a different phase shift from the various regimes described above.

\subsubsection{Proof of Proposition \ref{t: gliding ray trace}: Wave trace near a gliding ray}
\label{a: proof of gliding}

Here, we will prove Proposition \ref{t: gliding ray trace} showing the behavior of the wave trace near a gliding ray.

First, let $\gamma$ be a periodic ray with travel time $T$ that contains a gliding \branch{}. We assume that other rays with travel time $T$ have the same number of reflected/transmitted \branch{}s or differ from $\gamma$ only through a rotation. Thus, there is an $\epsilon$ such that there are no periodic rays outside of $[\gamma]$ with travel time in $[T,\epsilon)$. We prove in section \ref{s: gliding as limits} that there is a sequence of nongliding, broken turning rays $\gamma_m$, $m= 1,2,3,\dots$ converging to $\gamma$. Let $T_m$ being the travel time of these rays with ray parameter $p_m$. We would like to understand $\text{Tr}(\p_t G)\restriction_{(T-\epsilon,T+\epsilon)}$.

\begin{proof}[Proof of Proposition \ref{t: gliding ray trace}]
First, let us assume there is only a single interface at $r = \refl$. When $\gamma$ hits the interface at a critical angle, the transmitted \branch{} is tangent to the interface.
 As described in \cite[p.181]{CervenyHeadWaves}, when the angle of incidence is a little less than the critical angle, the ray of the transmitted wave has a turning point in the lower medium and later strikes the interface. It can be reflected from the interface (from below) and strike it again, and so on. Thus, the gliding wave, is a limit of waves which strike the interface from below $m=0,1,2,\dots$ times. These turning waves can be constructed with the standard WKB procedure we do in the turning regime. The limiting rays that strike the interface from below $m$ number of times are $\gamma_m$.
 There will be turning rays with travel times approaching $T$ from below that reflect from below the interface $m$ times.
  Following \eqref{eq: principal term spheric symm}, the principal  coefficient $a_m$ in the trace corresponding to this ray has the form
  \[
  a_m = C_d T_m^\sharp Q_m(p_m) \ii ^{N_m}n_m \abs{p_m^{-2} \p_p^2 \tau_m}^{-1/2}
  \]
  where $C_d$ is independent of $m$, $Q_m$ is the product of the scattering coefficients, and the other quantities are explained there. Each term above remains bounded, but $Q_m$ and $\abs{\p_p^2 \tau_m}^{-1/2}$ have decay properties that we will quantify as $ m \to \infty$.

  Since $\gamma_m$ enters the lower medium, reflects $m$ times, and exits into the upper medium, we have $Q_m(p_m) = Q_m'R_{m,--}^mT_{m,-+}$ for some uniformly bounded $Q_m'$. We showed in Appendix \ref{a: Generalized Debye} that for all non-gliding rays, the leading order contribution is $\int O(\omega^{3/2}) e^{\ii\omega(t-T_m)} \dd \omega$ while for $p=p_G$, the gliding ray, it is $$\int O(\omega^{3/2-\epsilon}) e^{\ii\omega(t-T)}\dd \omega$$
  where $\epsilon >0 $ is unknown, and it is even possible that $\epsilon = \infty$, which is essentially the case in \cite{ColinGliding, ColinClusterpoints} albeit a slightly different setting. Hence, to leading order,
\begin{multline*}
    \text{Tr}(\p_t G)(t)\restriction_J = \sum_m (t-T_m+\ii 0)^{-5/2}C_d T_m^\sharp Q'_m
    \\
    \ii ^{N_m}n_m \abs{p_m^{-1/2} \p_p^2 \tau_m}^{-1/2}R_{m,--}^mT_{m,-+}\abs{p_m^{-2} \p_p^2 \tau_m}^{-1/2} .
\end{multline*}

We must make sure this sum is finite. First, we note in \eqref{e: R_-- and T_-+} that $R_{m,--} \to -1$ as $m \to \infty$. Next,
\[
T_{m,-+} = \frac{2\mu_-(\refl)\wkb_{m,-}(\refl)}{\mu_-(\refl)\wkb_{m,-}(\refl) + \mu_+(\refl)\wkb_{m,+}(\refl)}
\]
Now, we already have $T_{m,+-} \to 0 $ as $m \to \infty$ since $\wkb_{m,-} \to 0$ but we need to know the rate this happens for the infinite sum above.
Let $\Theta_H$ be the epicentral distance the gliding \branch{} travels and $\Theta_{m,-}$ the epicentral distance of a turning segment. We know explicitly
\[
\Theta_{m,-} = 2 \int_{R^*_m}^b \frac{p_m}{(r')^2 \wkb_{m,-}} dr'
\]
where $R^*_m < \refl$ is the turning radius.
Next, we use that near the turning point, $r \approx R^*_m$, we have
\[
\wkb^2_{m,-} \simeq q_0(r-R^*_m) .
\]
Hence,
\begin{multline}
\Theta_{m,-} \simeq \frac{2p_m}{\sqrt{q_0}} \int_{R^*_m}^b \frac{1}{(r')^{-2}\sqrt{r-R^*_m}} dr' \\
\simeq \frac{2p_m}{b^2\sqrt{q_0}} \int_{R^*_m}^b \frac{1}{\sqrt{r-R^*_m} } dr' = \frac{4p_m}{b^2\sqrt{q_0}} \sqrt{b-R^*_m} \simeq  \frac{4p_m}{b^2q_0}\wkb_{m,-}(b) ,
\end{multline}
using that $R^*_m \to \refl$ as $m \to \infty$.
We also have by our construction
\[
m \Theta_{m,-} \approx \Theta_H
\]
so for large $m$, $\wkb_{m,-} = O(1/m)$ and hence $T_{m,+-} = O(1/m)$. Note that this is similar to estimate (6.17) in \cite{CervenyHeadWaves}.
Also, the radial travel $\tau_m$ has the form $\tau_m = 2\tau_m' + 2m\tau_{m,-}(\refl)$ where $\tau_m'$ remains uniformly bounded.
Hence, we obtain $\abs{\p^2_p \tau_m}^{-1/2} = O(1/\sqrt{m})$ (analogous to \cite{Bennett1982} and \cite[Section 6.1]{CervenyHeadWaves}). Thus, the sum converges.

The same argument holds in the case of multiple interfaces. The limiting principal symbol  $a_m$ will still involve a term of the form $T_{m,j,j-1} = O(1/m)$ where $r = d_j$ is the interface containing the gliding segment. In addition, the same argument above gives $\abs{\p^2_p \tau_m}^{-1/2} = O(1/\sqrt{m})$ which is all that is needed for a convergent sum.
\end{proof}

\section{Periodic grazing Ray}\label{a: grazing}
In this appendix, we will provide a more detailed analysis on the contribution of a periodic grazing ray to the trace formula. Our analysis closely follows \cite[Chapter 1]{bennett1982poisson}. We do the analysis for $p$ near the grazing value $\CMB/c(\CMB)$ and then show the minor change necessary for $p$ near the value $\refl/c(\refl)$ corresponding to grazing at the interface. We will show that the leading order (as $\omega \to \infty$) contribution will have the ``classic'' form of \eqref{eq: wave propagator form N interface in app} that can be handled with stationary phase while the lower order terms involve integrals of Airy functions where stationary phase does not apply. This is similar to the wave parametrix near a grazing ray described in \cite{MelroseGliding} involving Airy functions.

We assume $U$ satisfies the inner boundary condition and $U_n$ satisfies both boundary conditions. We will need to compute
\[
D = U_nT\restriction_{r = \surf} - U_nT\restriction_{r = R} = U_n T\restriction_{r = \surf}
\]
We then replace $\omega_n$ by a general $\omega$. Using the asymptotic computation to sum the eigenfunctions computed earlier or using the computation in \cite{ZhaoModeSum}, we have the Green's function representation

\beq \label{eq: hat G with general efunctions}
\hat G(x,x_0,\omega) = \frac{1}{2\pi}\sum_{l=0}^\infty
\frac{l + \tfrac{1}{2}}{l(l+1)} D^{-1} \mathbf{D}_l (\mathbf{D}_l)_0  P_l(\cos \Theta).
\eeq

Let $A_r$ and $B_r$ denote two linearly independent solutions to leading order for the equation \eqref{eq: equation for U_2} via solving \eqref{eq:VSchro} first. We will later pick 
\begin{equation}
\begin{split}
%\hspace*{0.75cm}
   A_r = A_r(\omega, p) &= 2 \pi^{1/2}\mu^{-1/2}r^{-1} \chi^{1/6} (-\wkb^2)^{-1/4}
                   \mathrm{A_+}(\omega^{2/3}\chi^{2/3}(r)) ,\\
   \chi(r) &= -(3/2)  \int_{\rturn}^r (-\wkb^2)^{1/2} \dd r ,
\end{split}
\end{equation}
and similarly for $B_r$ but using the Airy function $\mathrm A_-$, where $\mathrm A_\pm$ are Airy functions described in \cite{Bennett1982,MelroseGliding}.
Following similar notation as in section \ref{a: Generalized Debye} and equation \eqref{e: u_+ and u_- intro}, we write $U_n$ restricted to the first layer $\Omega_+$
\[
U^{(+)}_n = S( A_r + A B_r)
\]
for coefficients $S$ and $A$ that depend on $p$ and $\omega$, and $A$ was computed as \eqref{e: formula for A}. Similar to \eqref{e: u_+ and u_- intro}, for $U_n$ restricted to the second layer we set
\[
U^{(-)}_n = B( A_r + C B_r).
\]
We do not add the $(\pm)$ superscripts for $A_r,B_r$ since it will be clear in context based on which $r$ value we are evaluating.
Note that $A$ is computed to be the same as \eqref{e: formula for A} to satisfy the transmission conditions where $h_+^{(2)} = A_\refl$, $h_+^{(1)} = B_\refl$, and similarly for $h_-^{(1)},h_-^{(2)}$ in the formula.
The Neumann inner boundary condition to leading order is
\[
\p_r U^{(-)}_n\restriction_{r=R} = 0
\]
so
\[
U^{(-)}_n =B (A_r - \frac{A'_R}{B_R'}B_r),
\]
where a specific eigenvalue $\omega_n$ is being used, and for a radial function $D_r$, we use the notation $D'_b = \frac{d}{dr}\restriction_{r=b} D_r.$
Thus, we get 
\begin{equation}\label{e: grazing T equation}
    \frac{1}{\mu}T
    = \p_r U =S( A_r' +A B_r').
\end{equation}
Since $U_n$ is an eigenfunction, then
$\p_r U_n = 0$ at $r = \surf$ gives
\[
A_\surf' + A B_\surf' = 0
\]
when $\omega = \omega_n$. Thus, we can write
\[
U_n(r) = S(A_r - \frac{A_\surf'}{B_\surf'}B_r)
= \frac{S}{B_\surf'}(A_r B_\surf' - A_\surf'B_r)
\]
which implies
\[
U_n(\surf) = \frac{S}{B_\surf'} W(A, B),
\]
where $W(A,B)$ is the Wronskian of $A_r, B_r$ and is independent of $r$. We can now compute using \eqref{e: grazing T equation}
\[
\mu^{-1}D(\omega)
= U_n(1)T(1)
= \frac{S^2A_\surf '}{B_\surf'} W(A, B)\left(
1+ \frac{B_\surf'}{A_\surf'}A \right)
\]

Thus, 
\[
\frac{\mu_\surf U(r) U(r_0)}{D}
= \frac{1}{W(A,B)}
\left(\frac{B_\surf'}{A_\surf'}A_r - B_r \right) \left(A_{r_0}- \frac{A_\surf'}{B_\surf'}B_{r_0} \right)
\sum_k\left(-A\frac{B_\surf'}{A_\surf'} \right)^k
\]

Note that even though $B_{s}' = \frac{d}{dr}\restriction_{r = s}B_r $ and similarly for $A_{s}'$, to leading order as $\omega \to \infty$, we have
\[
\frac{B_s'}{A_s'}
= \frac{\mathrm A'_-(\omega^{2/3}\chi^{2/3}(s))}{\mathrm A'_+(\omega^{2/3}\chi^{2/3}(s))}
\]
Next, following the computation in section \ref{a: Generalized Debye}, the quantity $A^k$ above will consist of a sum of terms of the form
\[
R_{++}^{m_0}(T_\pm T_\mp)^{m_1}R_{--}^{m_2}
\left(\frac{A_{\refl^+}}{B_{\refl^+}} \right)^{m_3}\left(\frac{A_{\refl^-}}{B_{\refl^-}} \right)^{m_4}\left(\frac{A'_\CMB}{B'_\CMB} \right)^{m_5}
\]
where $A_{\refl^\pm}$ and $B_{\refl^\pm}$ comes from restricting $U^{(\pm)}$ to the interface $r=\refl$, and
the last term comes from the quantity $C$ determined by the inner boundary condition. This last term is where stationary phase cannot be applied for $p$ near $\CMB/c(\CMB)$ while the other terms will be ``classical'' after using Airy function asymptotics. It will be convenient to use the multiindex $M= (m_0,m_1,m_2,m_3,m_4,m_5) \in \mathbb Z_{\geq 0}^6$.

When computing the trace $\int_{\CMB}^\surf D^{-1}U(r)U(r) \rho dr$,
we need to compute the quantities
\[
l_{-1} = \int_{\CMB}^\surf A_r^2 \rho r^2 \dd r, \qquad
l_0 = \int_{\CMB}^\surf A_rB_r \rho r^2\dd r, \qquad
l_1 = \int_{\CMB}^\surf B_r^2 \rho r^2\dd r
\]
to leading order as $\omega \to \infty$. If these quantities are a symbol in $\omega$, as well as $B_R'/A_R'$ for $p$ near the grazing ray value $R/c(R)$, then we can just apply stationary phase to $(B_1'/A_1')^k$ using the asymptotic expansion of the Airy function as $\omega \to \infty$ by treating the rest of the integrand as the amplitude in the stationary phase calculation.

Thus, using \eqref{e: G with p integral} and the above computations, we get to leading order as $\omega \to \infty$
\begin{equation}
\int \hat G(x,x,\omega) \ dx    
\simeq \sum_{j=-1}^1\sum_{M \in \mathbb Z_{\geq 0}^6}\sum_i \sum_s
V^{(j)}_{isM}(\omega)
\end{equation}
where
\[
V^{(j)}_{isM}
=\omega^2 \int e^{\ii \pi \omega p s}
a^{(j)}_{s,M}(p, \omega)
\left( \frac{B_1'}{A_1'}\right)^{i+j}
\left( \frac{A_R'}{B_R'}\right)^{m_5}
\ \dd p
\]
and
\[
a^{(j)}_{s,M}(p, \omega)
=  \frac{1}{2\pi W(A,B)} (-)^{(s-1)/2}  p^{1/2}Q_M(p)\left(\frac{A_{\refl^+}}{B_{\refl^+}} \right)^{m_3}\left(\frac{A_{\refl^-}}{B_{\refl^-}} \right)^{m_4} l_j
\]
is a symbol of order two, and $Q_M$ is a product of transmission and reflection coefficients described in appendix \ref{a: Generalized Debye}.
Let us write
\begin{equation}\label{e: Visj definition}
V^{(j)}_{isM}
= \omega^2\int b_{ijsM}(p) \left( \frac{A_R'(p)}{B_R'(p)}\right)^{m_5}
\dd p
=\omega^2 \int  \left(\frac{d}{dp}\int_{-\infty}^p b_{ijsM}(y) \dd y\right)    \left(\frac{A_R'(p)}{B_R'(p)}\right)^{m_5}
\dd p, 
\end{equation}
where
\[
 b_{ijsM}(p) :=  e^{\ii \pi \omega p s}
a^{(j)}_{sM}(p, \omega)
\left( \frac{B_1'}{A_1'}\right)^{i+j}.
\]
We integrate by parts to obtain
\begin{multline}
=  \omega^2\left[\int_{-\infty}^pb_{ijsM}(y) \dd y  \left(\frac{A_R'(p)}{B_R'(p)}\right)^{m_5}   \right]_{p = -\infty}^\infty \\
-
\omega^2\int \dd y \int_{-\infty}^p
b_{ijsM}(y)(m_5) \left(\frac{A_R'(p)}{B_R'(p)}\right)^{m_5-1}
\frac{B_R'\frac{d}{dp}A'_R - A_R'\frac{d}{dp}B_R'}{(B_R')^2} \dd p.
\end{multline}
The first term is
\[
\omega^2\int_{-\infty}^\infty b_{ijsM}(y) \dd y  \left(\frac{A_R'(\infty)}{B_R'(\infty)}\right)^{m_5}  = \omega^2\int_{-\infty}^\infty b_{ijsM}(y) \dd y
\]
since $\mathrm A'_+(\infty)/\mathrm A'_-(\infty) = 1$. This is the main term which has a classic form, where we can apply the method of steepest descent argument used in section \ref{s: proof of trace formula}. We just need to verify that the other term is indeed lower order.

After using the Airy equation, the second term becomes
\[
\omega^2\int \dd y \int_{-\infty}^p
b_{ijs}(y)(m_5) \frac{(A_R'(p))^{m_5-1}}{(B_R'(p))^{m_5+1}} W(A,B) (d_p \chi_R^{2/3})\chi_R^{2/3}\omega^{4/3} \dd p
\]
\beq
=\omega^{10/3}(m_5) W(A,B)\int 
\tilde b_{ijsM}(p, \omega) \frac{(A_R'(p))^{m_5-1}}{(B_R'(p))^{m_5+1}} (d_p \chi_R^{2/3})\chi_R^{2/3} \dd p. 
\eeq
where the subscript $R$ on $\chi_R$ means its evaluated at $r = R$ and
\[
\tilde b_{ijsM}(p, \omega)
= \int_{-\infty}^p
b_{ijsM}(y)  \dd y
\]
Our integrand contains terms of the form
\[
A'_{\pm}(\omega^{2/3} \chi_R^{2/3}(p))
\]
so we use the substitution
\[
q = \chi_R^{2/3}(p) , \qquad
\dd q = d_p\chi_R^{2/3}(p) \dd p 
\]
so $p = p(q)$ is a function of $q$ and we get
\beq
=\omega^{10/3}(m_5) W(A,B)\int 
\tilde b_{ijsM}(q, \omega) \frac{(A_+'(\omega^{2/3}q))^{m_5-1}}{(A_-'(\omega^{2/3}q))^{m_5+1}}q \dd q. 
\eeq

Now we substitute 
\[
w = \omega^{2/3} q
\]
to obtain
\beq
=\omega^{2}(m_5) W(A,B)\int 
\tilde b_{ijs}( \omega^{-2/3} w, \omega) \frac{(A_+'(w))^{m_5-1}}{(A_-'(w))^{m_5+1}} w \dd w. 
\eeq
Near the $p$ value $p_g := R/c(R)$ corresponding to a periodic grazing ray is where stationary phase fails. If $p = p_g$, then $q = 0$. Thus, we will do a Taylor series about $w =0$ and we have
\[
\tilde b_{ijsM}( \omega^{-2/3} w, \omega)
= \tilde b_{ijsM}( 0, \omega)
+ \omega^{-2/3}\tilde c_{ijsM}( \omega^{-2/3} w, \omega)
\]
Applying the \cite[proof of Proposition 9]{bennett1982poisson}, the second term is indeed of order $\omega^{-2/3}$ and lower order than the principal term and can be disregarded. In fact, one can continue the Taylor expansion of the second terms and actually obtain lower order terms in the trace formula but we do not pursue this. Thus, taking the principal term gives us
\[
\simeq \omega^{2}(m_5) W(A,B)\int 
\tilde b_{ijs}( 0, \omega) \frac{(A_+'(w))^{m_5-1}}{(A_-'(w))^{m_5+1}} \dd w. 
\]
The analogous computation in \cite[proof of Proposition 9]{bennett1982poisson}, we have
\[
(i+j)\int^\infty_{-\infty} W(A, B)
\frac{(A_+'(w))^{m_5-1}}{(A_-'(w))^{m_5+1}} w\dd w
= \left(\frac{A_R'(\infty)}{B_R'(\infty)}\right)^{m_5} = 1.
\]
We are then left with
\[
V_{ik}^j \eqsim
\omega^2 \int_{-\infty}^\infty
b_{ijsM}(y) \dd y
- \omega^2 \tilde b_{ijsM}(q= 0, \omega)
= \omega^2 \int_{p_g}^{\infty}
b_{ijsM}(y) \dd y .
\]

For the other case where we consider periodic rays with a \branch{} that grazes the interface, we need to do the above analysis for $p$ near $\refl/c(\refl)$. The above argument applies but the quantities $\left( \frac{A_{\refl^+}}{B_{\refl^+}}\right)^{m_3}$ and 
$\left( \frac{A_R'}{B_R'}\right)^{m_5}$ need to be interchanged in \eqref{e: Visj definition} and the rest of the argument below that.

\bibliography{poisson_summation}
\bibliographystyle{plain}
\end{document}